\DeclareDocumentMathCommand{\corMat}{}{\Psi^{\operatorname{cor}}}
\DeclareDelimiter{\dTV}[\mathnormal{d}_{\operatorname{TV}}]{\lparen}{\rparen}
\DeclareDelimiter{\DKL}[\mathcal{D}_{\operatorname{KL}}]{\lparen}{\rparen}
\DeclareDelimiter{\Ent}[\operatorname{Ent}]{\lbrack}{\rbrack}
\DeclareDelimiter{\chis}[\chi^2]{\lparen}{\rparen}
\DeclareDelimiter{\Var}[\operatorname{Var}]{\lbrack}{\rbrack}
\DeclareDelimiter{\Ehat}[\widehat{\mathbb E}]{\lbrack}{\rbrack}
\title{Universality of Spectral Independence with Applications \\ to Fast Mixing in Spin Glasses}
\date{}
	\author[1]{Nima Anari}
	\author[2]{Vishesh Jain}
	\author[1]{Frederic Koehler}
	\author[1]{Huy Tuan Pham}
	\author[1]{Thuy-Duong Vuong}
	\affil[1]{Stanford University, \url{{anari,fkoehler,huypham,tdvuong}@stanford.edu}}
	\affil[2]{University of Illinois Chicago, \url{visheshj@uic.edu}}
	\author{}
\begin{document}
	\maketitle
	\begin{abstract}
	    We study Glauber dynamics for sampling from discrete distributions $\mu$ on the hypercube $\{\pm 1\}^n$. Recently, techniques based on spectral independence have successfully yielded optimal $O(n)$ relaxation times for a host of different distributions $\mu$. We show that spectral independence is universal: a relaxation time of $O(n)$ implies spectral independence.
	    
	    We then study a notion of tractability for $\mu$, defined in terms of smoothness of the multilinear extension of its Hamiltonian -- $\log \mu$ -- over $[-1,+1]^n$. We show that Glauber dynamics has relaxation time $O(n)$ for such $\mu$, and using the universality of spectral independence, we conclude that these distributions are also fractionally log-concave and consequently satisfy modified log-Sobolev inequalities. We sharpen our estimates and obtain approximate tensorization of entropy and the optimal $\widetilde{O}(n)$ mixing time for random Hamiltonians,
     i.e.\ the classically studied mixed $p$-spin model at sufficiently high temperature. These results have significant downstream consequences for concentration of measure, statistical testing, and learning.
	\end{abstract}
	\clearpage
 
	\section{Introduction}

In this paper, we study probability measures $\mu(\sigma)$ on the hypercube $\set{\pm 1}^n$ and the standard Markov chain for sampling from such distributions known as the Glauber dynamics. Any such distribution $\mu(x)$ with full support can be written in the form
\[ \mu(\sigma) = \frac{1}{Z} \exp\parens*{H(\sigma)} \]
for some function $H : \set{\pm 1}^n \to \R$, unique up to an additive constant, and corresponding normalizing constant $Z$. We say that such a distribution $\mu$ is the \emph{Gibbs measure for Hamiltonian} $H$. 

Directly sampling from such a distribution is not easy, because evaluating the normalizing constant $Z$ (``partition function'') can be computationally difficult. For this reason, samples are typically generated using a Markov chain approach. In particular, the \emph{Glauber dynamics} or Gibbs sampler is a natural Markov chain with stationary distribution $\mu$ that resamples coordinates one at a time:
\begin{Algorithm*}
    Let $\sigma^{(0)} \in \{\pm 1\}^n$ be the initial state.\;
	\For{$t=1,\dots$}{
		Select $i$ uniformly at random from $[n] = \set{1,\dots,n}$.\;
		Let $\sigma^{(t)}_j = \sigma^{(t - 1)}_j$ for all $j \ne i$, and sample $\sigma^{(t)}_i$ from the conditional law $\P_{\mu}{\sigma = \cdot \given \sigma_{\sim i} = \sigma_{\sim i}^{(t - 1)}}$.\;
	}
\end{Algorithm*}
Each step of this chain is easy to implement given access to the Hamiltonian $H$, and in particular does not require knowledge of the normalizing constant $Z$. What is less obvious is how long the chain should be run. Understanding the \emph{mixing time}, i.e.\ the number of steps which must be executed for the chain to approximately reach stationarity, for particular Hamiltonians $H$ is a very important and mathematically difficult task which has been intensely studied for multiple decades (see e.g.\ \cite{aldous1995reversible,martinelli1999lectures,guionnet2003lectures,levin2017markov} for background). 
It suffices to say that there are many approaches to prove rapid mixing with different strengths and weaknesses. 

\paragraph{High-dimensional expansion and spectral independence.}Recently, a fruitful approach to analyzing the Glauber dynamics has emerged based on connections to high-dimensional expansion and the geometry of polynomials. 
These ideas have been used to establish optimal mixing time bounds in many settings (see e.g.\ \cite{anari2019log,cryan2019modified,ALO20,blanca2021mixing,chen2020rapid,chen2021optimal,anari2021entropic,anari2021entropic2,chen2021rapid,alimohammadi2021fractionally,abdolazimi2022matrix,chen2022spectral,chen2022optimal}). 

One of the key concepts in this approach is \emph{spectral independence}. 
A Gibbs measure $\mu$ is $\eta$-\emph{spectrally independent} if $\lambda_{\max}(\Psi) \le \eta$ where $\Psi \in \mathbb{R} ^{2n \times 2n}$ is the correlation matrix indexed by pairs in $[n] \times \{\pm 1\}$ with entries
\[ \Psi_{(i,\tau_i),(j,\tau_j)} = \P_{\mu}{\sigma_i = \tau_i \mid \sigma_j = \tau_j} - \P_{\mu}{\sigma_i = \tau_i}. \]
Proving that the Gibbs measure (and all of its conditionings) is $O(1)$-spectrally independent is the key first step for applying the high-dimensional expansion approach to mixing time (e.g.\ to apply Theorem 1.3 of \cite{ALO20}, which implies polynomial mixing time bounds). 

Understanding this concept and finding ways to prove spectral independence is therefore very useful. Many authors have asked and studied how spectral independence relates to existing notions in the sampling literature. For example, Dobrushin's uniqueness criterion \cite{dobrushin1968problem} is a classical concept which considers the properties of a related-looking influence matrix. Recently, \textcite{liu2021coupling} and \textcite{blanca2021mixing} proved that Dobrushin's criterion (and more generally, the existence of a contractive coupling) implies $O(1)$-spectral independence. A similar implication holds under other widely used criteria such as correlation decay \cite{chen2020rapid} or zero-freeness \cite{alimohammadi2021fractionally, chen2022spectral}.

\subsection{Our Results}
\paragraph{Universality of spectral independence.} 
We prove a new result which directly connects spectral independence with the classical notion of spectral gap. 
The \emph{spectral gap} or Poincar\'e constant of a Markov chain is the gap between the two largest eigenvalues of its transition matrix $P$. In other words, we say that $P$ has \emph{spectral gap} $\lambda$ if $\lambda_2(P) = 1 - \lambda$. This is a key concept in the study of Markov chains --- informally, the spectral gap controls the speed of mixing from a warm start. For this reason, the inverse spectral gap $1/\lambda$ is known as the \emph{relaxation time} of the (lazy version of the) Markov chain \cite{levin2017markov}.

We prove that \emph{spectral independence is a relaxation of the spectral gap}, in other words that $O(1)$-spectral independence necessarily holds if the Glauber dynamics has a large spectral gap.
\begin{theorem}
If the Glauber dynamics of $\mu$ has spectral gap $\frac{1}{C n}$,
then $\mu$ is $C$-spectrally independent. 
\end{theorem}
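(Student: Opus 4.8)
The plan is to reduce spectral independence on the two-point alphabet to an operator inequality for the covariance matrix of $\mu$, and then to verify that inequality by plugging \emph{affine} test functions into the Poincar\'e inequality supplied by the spectral gap.

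\medskip
\noindent\textbf{Step 1: reduce $C$-spectral independence to $\operatorname{Cov}_\mu(\sigma)\preceq C\cdot\diag\bigl(\operatorname{Var}_\mu[\sigma_1],\dots,\operatorname{Var}_\mu[\sigma_n]\bigr)$.}
Writing $\Psi_{(i,\tau_i),(j,\tau_j)} = \operatorname{Cov}_\mu(\mathbf 1\{\sigma_i=\tau_i\},\mathbf 1\{\sigma_j=\tau_j\})/\mathbb P[\sigma_j=\tau_j]$, one sees that $\Psi$ is conjugate, via the diagonal matrix of square-root marginals, to the symmetric matrix $\widetilde\Psi_{(i,\tau_i),(j,\tau_j)}=\operatorname{Cov}_\mu(\mathbf 1\{\sigma_i=\tau_i\},\mathbf 1\{\sigma_j=\tau_j\})/\sqrt{\mathbb P[\sigma_i=\tau_i]\,\mathbb P[\sigma_j=\tau_j]}$; in particular $\Psi$ has real spectrum and $\lambda_{\max}(\Psi)=\lambda_{\max}(\widetilde\Psi)$. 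Now $\widetilde\Psi$ is the Gram matrix in $L^2(\mu)$ of the vectors $h_{(i,\tau)}:=\mathbb P[\sigma_i=\tau]^{-1/2}\bigl(\mathbf 1\{\sigma_i=\tau\}-\mathbb P[\sigma_i=\tau]\bigr)$, so $\lambda_{\max}(\Psi)=\max_{\lVert c\rVert_2=1}\bigl\lVert\sum_{i,\tau}c_{(i,\tau)}h_{(i,\tau)}\bigr\rVert_{L^2(\mu)}^2$. The key point is that over $\{\pm1\}$ both $h_{(i,+1)}$ and $h_{(i,-1)}$ are scalar multiples of $\sigma_i-\mathbb E_\mu[\sigma_i]$, so every such linear combination is an affine function $\sum_i a_i(\sigma_i-\mathbb E_\mu[\sigma_i])$; minimizing $\lVert c\rVert_2^2$ subject to producing a fixed coefficient vector $a$ is a one-variable Lagrange computation that yields $\min\lVert c\rVert_2^2=\sum_i a_i^2\operatorname{Var}_\mu[\sigma_i]$. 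Hence
\[
\lambda_{\max}(\Psi)=\max_{a\ne 0}\frac{a^\top \operatorname{Cov}_\mu(\sigma)\,a}{\sum_i a_i^2\operatorname{Var}_\mu[\sigma_i]}.
\]

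\medskip
\noindent\textbf{Step 2: the Poincar\'e inequality with affine test functions.}
The Glauber dynamics is $P=\tfrac1n\sum_i P_i$ where $P_i$ is the $L^2(\mu)$-orthogonal projection onto functions of $\sigma_{\sim i}$, so its Dirichlet form is $\mathcal E(f,f)=\tfrac1n\sum_i\mathbb E_{\sigma_{\sim i}}\bigl[\operatorname{Var}_{\sigma_i\mid\sigma_{\sim i}}[f]\bigr]$, and a spectral gap of $\tfrac1{Cn}$ is exactly the statement that $\operatorname{Var}_\mu[f]\le C\sum_i\mathbb E_{\sigma_{\sim i}}\bigl[\operatorname{Var}_{\sigma_i\mid\sigma_{\sim i}}[f]\bigr]$ for all $f$. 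Applying this to $f=\langle a,\sigma\rangle$: the left side is $a^\top\operatorname{Cov}_\mu(\sigma)\,a$; on the right, conditioning on $\sigma_{\sim i}$ freezes every summand except $a_i\sigma_i$, so $\operatorname{Var}_{\sigma_i\mid\sigma_{\sim i}}[f]=a_i^2\operatorname{Var}_{\sigma_i\mid\sigma_{\sim i}}[\sigma_i]$, and the law of total variance gives $\mathbb E_{\sigma_{\sim i}}\bigl[\operatorname{Var}_{\sigma_i\mid\sigma_{\sim i}}[\sigma_i]\bigr]\le\operatorname{Var}_\mu[\sigma_i]$. Therefore $a^\top\operatorname{Cov}_\mu(\sigma)\,a\le C\sum_i a_i^2\operatorname{Var}_\mu[\sigma_i]$ for every $a$.

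\medskip
\noindent\textbf{Step 3: conclude.}
Comparing the display of Step 1 with the inequality of Step 2 gives $\lambda_{\max}(\Psi)\le C$, i.e.\ $\mu$ is $C$-spectrally independent; the product measure shows the constant is sharp. I expect the only genuinely non-routine part to be Step 1 --- the observation that the $2n\times 2n$ correlation matrix secretly lives on an $n$-dimensional space of affine functions and is governed by the normalized covariance matrix. Once that reduction is in place, the use of linear test functions in the spectral-gap inequality is essentially forced, and the rest is bookkeeping.
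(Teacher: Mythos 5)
Your proof is correct, and it uses the same key idea as the paper's Theorem~\ref{thm:poincare-to-si}: plug affine test functions into the Poincar\'e inequality furnished by the spectral gap. The paper states and proves the result for the $k\leftrightarrow k-1$ down-up walk on an arbitrary pure simplicial complex (so it covers larger alphabets for free), works in the homogenized picture, and uses \cref{fact:si-flc} to recast $C$-spectral independence as the inequality $\Var_{S\sim\mu}\bigl[\sum_{i\in S}v_i\bigr]\le C\,\E_{S\sim\mu}\bigl[\sum_{i\in S}v_i^2\bigr]$ for all $v$; the conditional variance is then bounded by the conditional second moment, using that $S\setminus S_{k-1}$ is a singleton. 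Your version specializes to $\{\pm1\}^n$, carries out the Gram-matrix reduction from the $2n\times 2n$ matrix $\Psi$ to the $n\times n$ normalized covariance explicitly (the paper outsources this to \cref{fact:si-flc}, i.e.\ to the second derivative of $\log g_\mu$), and finishes with the law of total variance $\E[\Var[\sigma_i\mid\sigma_{\sim i}]]\le\Var[\sigma_i]$ instead of the second-moment bound. These are genuinely equivalent endgames once the reduction to affine test functions is in place, so the difference is bookkeeping rather than substance. One small thing worth knowing: the paper's formulation automatically handles the centering degrees of freedom (by allowing all $v\in\R^{2n}$), whereas your Step~1 performs the one-variable minimization over $c$ that discharges exactly that freedom; the two are the same computation viewed from opposite ends.
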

We actually prove a stronger fact (\cref{thm:poincare-to-si}), which is the natural generalization of this result to down-up walks. In particular, this means that the same relation holds for the Glauber dynamics with spins valued in arbitrary alphabets, not just binary spins. 

Conceptually, this gives a simple explanation for the ubiquity of spectral independence: it is a \emph{necessary} condition for the Glauber dynamics to have $O(n)$ relaxation time. It immediately implies that spectral independence holds in a large number of settings where mixing time analysis was performed using other methods (e.g., via stochastic localization \cite{eldan2021spectral,anari2021entropic,chen2022localization} or curvature arguments \cite{caputo2015approximate,erbar2017ricci}). This in turn has further nontrivial consequences: if the relaxation time is $O(n)$ under all external fields, we get fractional log-concavity of the generating polynomial, which in turn implies subadditivity of entropy and Brascamp-Lieb type inequalities  \cite[see][]{anari2021entropic,alimohammadi2021fractionally,barthe2011correlation,blanca2021mixing}. 

\paragraph{Rapid mixing from smoothness of the multilinear extension.} 
Building on the universality of spectral independence, we are able to prove new results concerning the mixing of the Glauber dynamics. We would like to understand what conditions on $H$ naturally lead to rapid mixing of the Glauber dynamics. For inspiration, we know that for \emph{continuous distributions} on $\mathbb{R}^n$ and the corresponding (continuous time) \emph{Langevin dynamics}, strong log-concavity of the distribution implies rapid mixing via Bakry-Emery theory \cite{bakry2014analysis}. For a distribution with smooth density $p$, this just means that $\nabla^2 \log p \preceq - \epsilon I$ for some $\epsilon > 0$. 

We identify a natural analogue of this fact on the discrete hypercube. First (as in e.g.\ \cite{eldan2018decomposition}), we identify $H$ with its \emph{multilinear extension} $H : \mathbb{R}^n \to \mathbb{R}$ defined by 
\[ H(x) = \sum_{S \subset [n]} \hat H(S) \prod_{i \in S} x_i \]
where $\hat H(S) := \frac{1}{2^n} \sum_{\sigma \in \{\pm 1\}^n} H(\sigma) \prod_{i \in S} \sigma_i$ is the Fourier transform of $H$ viewed as a function on the hypercube \cite{o2014analysis}.
We prove that as long as $\nabla^2 H$ is spectrally small (i.e.\ $H$ is sufficiently smooth in the usual sense), Glauber dynamics mixes rapidly:
\begin{theorem}[Combined \cref{thm:poincare bound} and \cref{thm:at-general}]\label{thm:general-intro}
There exist absolute constants $A,B > 0$ for which the following holds. 
Suppose that $\mu$ is a probability measure with full support on the hypercube $\{\pm 1\}^n$, so $\mu(x) \propto \exp(H(x))$ for some function $H : \{\pm 1\}^n \to \mathbb{R}$. Suppose furthermore that 
\[ \beta := \max_{\sigma \in \{\pm 1\}^n} \|\nabla^2 H(\sigma)\|_{\operatorname{op}} \le A. \]
Then we have that:
\begin{enumerate}
    \item The spectral gap of the Glauber dynamics on $\mu$ is at least $\frac{1}{(1 + B\beta)n}$.
    \item $\mu^{\hom}$ is $\frac{1}{1 + B\beta}$-fractionally log-concave.
    \item $\mu$ satisfies approximate tensorization of entropy with constant $C = O(n^{B\beta})$.
    \item The Glauber dynamics on $\mu$ satisfies the Modified Log-Sobolev Inequality (MLSI) with constant $\rho = \Omega(1/n^{1 + B\beta})$. 
    \item The Glauber dynamics on $\mu$ satisfies 
    \[ \tau_{\operatorname{mix}}(\epsilon) = O\left(n^{1 + B\beta}[\log\log(1/\min_{\sigma} \mu(\sigma)) + \log(1/\epsilon)]\right). \]
\end{enumerate}
\end{theorem}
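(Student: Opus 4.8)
\medskip
\noindent\textbf{Proof plan.}
The five conclusions are not proved in isolation: items~3--5 are black-box consequences of item~2, item~2 follows from item~1 by invoking universality, and item~1 is where the real work lies. The enabling observation is that the hypothesis is stable under the two operations we need. Since $\nabla^2 H(x)$ has multilinear entries, $x \mapsto \|\nabla^2 H(x)\|_{\mathrm{op}}$ is convex on every axis-parallel line, so in fact $\beta = \max_{x \in [-1,1]^n}\|\nabla^2 H(x)\|_{\mathrm{op}}$; consequently adding an external field (which does not alter $\nabla^2 H$) and pinning a coordinate (which passes to a principal submatrix of $\nabla^2 H$ evaluated at a boundary point) both preserve the bound $\beta$. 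Thus anything we prove for $\mu$ from ``$\beta$ small'' holds automatically for every conditioning of $\mu$ under every external field.

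\medskip
\noindent\textbf{Item~1: the Poincar\'e bound (the main step).} Writing the Glauber generator as $P = \tfrac1n\sum_i P_i$ with $P_i$ the self-adjoint projection onto functions not depending on coordinate $i$, the Dirichlet form equals $\tfrac1n\sum_i \mathbb{E}_\mu[\operatorname{Var}(f\mid \sigma_{\sim i})]$, so item~1 is equivalent to the approximate tensorization of variance inequality $\sum_i \mathbb{E}_\mu[\operatorname{Var}(f\mid\sigma_{\sim i})] \ge \tfrac{1}{1+B\beta}\operatorname{Var}_\mu(f)$. At $\beta = 0$ the measure is a product and this holds with constant $1$ (Efron--Stein). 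For $\beta>0$ I would establish it by a one-coordinate-at-a-time martingale decomposition of the variance: conditioned on the other spins, $\sigma_i$ is a single spin whose law is governed by the effective field $\partial_i H$, and the dependence of that field on any other spin is controlled uniformly by $\|\nabla^2 H\|_{\mathrm{op}} \le \beta$; the error accumulated across the $n$ revelation steps is a Neumann series in an operator of norm $O(\beta)$, which telescopes to the clean factor $1+O(\beta)$. (An alternative worth trying is to transfer a Poincar\'e inequality from an auxiliary Gaussian-field localization process, in the spirit of stochastic localization, for which the smallness of $\nabla^2 H$ is exactly the contraction estimate one needs; this is the natural route toward the mixed $p$-spin application.) The delicate point --- and what I expect to be the real obstacle --- is to make the \emph{operator} norm suffice, rather than an $\ell_\infty$ row-sum as in Dobrushin-type arguments, and to get the constant with \emph{no} spurious $\log n$ factor: a naive combination of spectral independence with local-to-global would only yield relaxation time $n^{1+O(\beta)}$, which is precisely the weaker bound that necessarily reappears for the entropy functional in items~3--5.

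\medskip
\noindent\textbf{Item~2: spectral independence via universality, hence FLC.} By the stability remark, item~1 holds for $\mu$ under every external field, i.e.\ every linear tilt of $\mu$ has Glauber spectral gap at least $\tfrac{1}{(1+B\beta)n}$. Applying \cref{thm:poincare-to-si} to each of these shows that $\mu$ is $(1+B\beta)$-spectrally independent under all external fields, which, by the known correspondence between spectral independence under arbitrary external fields and fractional log-concavity of the homogenization, is exactly the statement that $\mu^{\hom}$ is $\tfrac{1}{1+B\beta}$-fractionally log-concave.

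\medskip
\noindent\textbf{Items~3--5: the downstream pipeline.} These are standard once item~2 is in hand. Fractional log-concavity with parameter $\alpha = \tfrac{1}{1+B\beta}$, being preserved under pinning, gives $\alpha$-entropic independence under all pinnings, hence approximate tensorization of entropy with constant $C = O\big(n^{\,1/\alpha - 1}\big) = O(n^{B\beta})$ (this is item~3, and the source of the $n^{B\beta}$ blow-up). Approximate tensorization of entropy with constant $C$ yields the modified log-Sobolev constant $\rho = \Omega\big(\tfrac1{Cn}\big) = \Omega\big(n^{-(1+B\beta)}\big)$ (item~4), and MLSI converts, by the usual entropy-decay argument started from the worst-case initial state, into $\tau_{\mathrm{mix}}(\epsilon) = O\big(\tfrac1\rho\big[\log\log(1/\min_\sigma\mu(\sigma)) + \log(1/\epsilon)\big]\big)$ (item~5). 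Taking the absolute constant $A$ small enough keeps $\alpha$ bounded away from $0$ and every estimate in its regime of validity.
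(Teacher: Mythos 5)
You correctly identify the architecture (items~3--5 from item~2, item~2 from item~1 via universality under tilting, item~1 is the crux) and the stability observation that pinning and external fields preserve the hypothesis. Your treatment of items~2--5 matches the paper: item~2 is exactly the paper's argument (tilting leaves $\nabla^2 H$ unchanged, apply \cref{thm:poincare-to-si} to each tilt, then use the equivalence between spectral independence under all external fields and fractional log-concavity); items~3--5 are the standard FLC~$\to$~entropic independence~$\to$~(block) tensorization~$\to$~MLSI~$\to$~mixing-time pipeline, and the paper fills this in with \cite[Theorem~5]{anari2021entropic} plus \cref{cor:at for constantly many free spins} to convert block tensorization into single-site tensorization.

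The gap is item~1, and you essentially flag it yourself. Your proposed ``one-coordinate-at-a-time martingale decomposition'' with a ``Neumann series in an operator of norm $O(\beta)$'' is not a proof sketch I can evaluate --- as you note, such arguments naturally control the $\ell_\infty$ row-sum (Dobrushin-type) rather than the operator norm, and you declare this translation ``the real obstacle'' without resolving it. Your alternative (transfer from stochastic localization) is an unworked gesture. In short, the step you identify as essential is the step you do not carry out.

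What the paper actually does for item~1 is neither of your two suggestions, and hinges on a bootstrap you do not mention. They first prove \cref{thm:continuity}, which is the paper's own consequence of universality: combining \cref{thm:poincare-to-si} with Oppenheim's trickle-down and the local-to-global argument shows that the relaxation time of the $(N-k)$-spin system is at most a \emph{constant factor} worse than that of its $(N-k-1)$-spin links (``$a_{N-k} \le a_{N-k-1}\, T_{N-k-1}$'' with $T_{N-k-1} \le 7/2$). Iterating this directly would indeed yield only $n^{O(\beta)}$, which is exactly the lossy bound you correctly predict from naive local-to-global. The crucial move is that this crude a priori continuity estimate is instead fed into the sharp recursive spectral-gap inequality of \textcite{pspin} (\cref{lem:inductive}), whose error term involves $\max(1,a_{N-k})^2$ and is therefore useless without an a priori bound on $a_{N-k}$. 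In the original \cite{pspin} this a priori control was supplied by a model-specific continuity argument for random $H$; the paper's insight is that universality of spectral independence supplies it for free, in full generality. Closing the induction (base case $N-k\le 2$, a one- or two-site Ising model, via \cite{eldan2021spectral}) then gives the clean $1/(1+O(\beta))n$ Poincar\'e constant with no $n^{O(\beta)}$ loss. This bootstrap --- trickle-down as an a priori estimate that unlocks a sharper recursion --- is the missing idea in your proposal.
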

We formally define the concepts of approximate tensorization, fractional log-concavity, etc.\ appearing here in the preliminaries (\cref{sec:preliminaries}). 
Note that the first conclusion by itself only implies $\widetilde{O}(n^2)$ mixing time, whereas the final conclusion gives a much better bound (the constant $A$ under which we can prove the spectral gap inequality is relatively small, and in particular $AB < 1$). 
\begin{remark}[Tightness]
It is not true that $\mu^{\hom}$ is a log-concave distribution (in the sense of \cite{anari2019log}) under the assumptions of this theorem --- the relaxation to fractional log-concavity is required.  Relatedly, the functional dependence of the Poincar\'e constant on $\beta$ in conclusion (1) is optimal. By this we mean that $\beta$ cannot be replaced by any function which is $o(\beta)$ as $\beta \to 0$, e.g.\ by $\beta^2$. This can be seen by examining one of various examples where the spectral gap is known exactly --- in particular the Ising model on a cycle (Theorem 15.5 of \cite{levin2017markov}). Going back to the first point, if $\mu^{\hom}$ were log-concave then this would imply  a spectral gap of at least $1/n$ \cite{anari2019log}, which is not true. Finally, the result cannot be true for any value of $A > 1$, because then it would include the supercritical Curie-Weiss model for which mixing takes exponential time \cite{levin2017markov}.
\end{remark}
\begin{remark}[Need for a two-sided assumption]
Comparing to the continuous setting, we might guess that this result would be true under only a one-sided bound on $\nabla^2 H$, allowing for arbitrary large negative eigenvalues (somewhat in the spirit of Equation (4) of \cite{eldan2022log}). For example, if we define a Gibbs measure with respect to the standard Gaussian distribution with Radon-Nikodym derivative proportional to $\exp(H(x))$, then we only need $\nabla^2 H \prec (1 - \epsilon) I$ to apply the Bakry-Emery criterion \cite{bakry2014analysis}.
However, we know from \cite{koehler2022sampling} that already in the case of quadratic/Ising interactions, large negative eigenvalues of $H$ can make the spectral gap large and even make sampling computationally hard.
\end{remark}

\begin{remark}
Bounds on $\beta$ can be directly obtained from the tensor injective norm of the coefficients of $H$. We can apply existing results to control the coefficient tensor for natural random examples, like the mixed $p$-spin model discussed below, or when $H(x)$ counts the number of satisfied constraints in random $k$-XOR sat (in which case, the tensor will generally be sparse), see e.g.\ \cite{zhou2021sparse}.
\end{remark}

\paragraph{Optimal Mixing in the Mixed $p$-Spin Model.} One of the fundamental models in statistical physics corresponds to the case where the Hamiltonian $H$ has random coefficients. In other words,
\[ H(\sigma) = \sum_{p = 2}^{\infty} \frac{\beta_p}{n^{(p - 1)/2}} \sum_{1 \le i_1, \ldots, \le i_p \le n} g_{i_1 \cdots i_p} \sigma_{i_1} \cdots \sigma_{i_p} + \sum_i h_i \sigma_i \]
where the sum ranges over distinct $i_1,\ldots,i_p$, each of the coefficients $g_{i_1 \cdots i_p}$ is i.i.d.\ standard Gaussian, and we allow the external fields $h_i$ to be arbitrary (in this work, they are allowed to depend on $g$). 
The corresponding (random) measure $\mu \propto \exp(H)$ on $\{\pm 1\}^{n}$ is known as the \emph{mixed $p$-spin model} and it has been deeply studied in spin glass theory (see e.g.\ \cite{talagrand2010mean,panchenko2013sherrington} for rigorous results). 

In recent work (see below for more discussion of related work), \textcite{pspin} established an $\widetilde{O}(n^2)$ mixing time for this model at sufficiently high temperature (small $\beta$). \Cref{thm:general-intro} implies an improved mixing time bound of $\widetilde{O}(n^{1 + O(\beta)})$ for this model, where $\beta := \sum_{p\geq 2}\sqrt{p^3 \log{p}}\cdot \beta_p$. Our next result improves this to the optimal mixing time bound by proving that approximate tensorization of entropy (and hence, a modified Log-Sobolev inequality) holds with a dimension-free constant.
\begin{theorem}
\label{thm:mix p spin main}
There exists an absolute constant $A > 0$ for which the following holds. Suppose $\beta_0 := \sum_{p \geq 2}\sqrt{p^3 \log p}\cdot \beta_p \leq A.$ Let $\beta := \sum_{p \geq 2}\sqrt{2^p p^3 \log p}\cdot \beta_p$. Then, for the mixed $p$-spin model $\mu$,
with probability $\geq 1 -\exp(-\Theta(N))$ over the randomness of the Gaussian interaction terms $g$,
\begin{enumerate}
    \item $\mu$ (equivalently $\mu^{\hom}$) satisfies approximate tensorization of entropy with constant $C = O_\beta(1)$.
    \item The discrete-time Glauber dynamics on $\mu$ satisfies the Modified Log-Sobolev Inequality (MLSI) with constant $\rho = \Omega_\beta(1/n)$. 
    \item The discrete-time Glauber dynamics on $\mu$ satisfies
    \[ \tau_{\operatorname{mix}}(\epsilon) = O_\beta\parens*{n(\log\log(1/\min_{\sigma} \mu(\sigma)) + \log(1/\epsilon)}. \]
\end{enumerate}
\end{theorem}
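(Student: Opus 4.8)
The plan is to reduce everything to the statement that $\mu$ satisfies approximate tensorization of entropy with a \emph{dimension-free} constant; once item 1 is in hand, items 2 and 3 are standard consequences. The first step is to check that the random Hamiltonian meets the smoothness hypothesis of \cref{thm:general-intro} with room to spare. Writing $H=\sum_{p\ge2}H_p$ with $H_p$ the degree-$p$ part, the Hessian $\nabla^2 H(x)$ at a point $x\in[-1,1]^n$ decomposes as a sum over $p$ of matrices whose entries are normalized contractions of the Gaussian $p$-tensor $g$ against $x^{\otimes(p-2)}$. I would control each such random matrix by a standard $\varepsilon$-net/Gaussian-concentration estimate for the operator norm of random tensors (the level-$p$ term concentrating with failure probability roughly $e^{-\Theta(np\log p)}$), and then take a union bound over the $2^n$ vertices of the cube, obtaining that with probability $\ge 1-e^{-\Theta(n)}$,
\[ \sup_{\sigma\in\{\pm1\}^n}\|\nabla^2 H(\sigma)\|_{\operatorname{op}}\;\le\;O\Big(\sum_{p\ge2}\sqrt{p^3\log p}\,\beta_p\Big)\;=\;O(\beta_0). \]
The $\sqrt{p^3\log p}$ weighting and the exponentially small failure probability are exactly what make the union bound over the cube go through. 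Choosing the absolute constant $A$ small, \cref{thm:general-intro} now applies and gives a spectral gap $\ge 1/((1+O(\beta_0))n)$, fractional log-concavity of $\mu^{\hom}$ with parameter $1-O(\beta_0)$, and approximate tensorization with constant $n^{O(\beta_0)}$. The key structural observation for the rest of the argument is that the Hessian bound is preserved both under adding an external field $\sum_i\lambda_i\sigma_i$ (which does not change $\nabla^2 H$ at all) and under conditioning on a subset of the spins (which passes to a principal submatrix of $\nabla^2 H$); hence all of these conclusions in fact hold uniformly for $\mu$ under arbitrary external fields and arbitrary pinnings, with the same constants.

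The heart of the proof is to upgrade the tensorization constant from the dimension-dependent $n^{O(\beta_0)}$ coming out of \cref{thm:general-intro} to a genuinely dimension-free $O_\beta(1)$; fractional log-concavity alone never yields better than $n^{O(\cdot)}$, so this requires exploiting the uniformity just noted. I would do this with a localization (field dynamics) argument: run the continuous family of tilted measures $\mu_t(\sigma)\propto\exp\big(H(\sigma)+\langle y_t,\sigma\rangle\big)$, driven so that $\mu_0=\mu$ and $\mu_t$ localizes to a point mass as $t\to\infty$. Because the Hessian bound is invariant under the tilt and under conditioning, every $\mu_t$ — and every conditional measure encountered along the process — again satisfies the hypothesis of \cref{thm:general-intro}, so the \emph{local} functional inequality (spectral/entropic independence with a uniform $O(\beta_0)$ bound) holds at \emph{every} scale of the localization. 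Combining the per-scale entropy-contraction estimate coming from this uniform local inequality with the entropy decomposition along the localization process, and integrating from $t=0$ to $t=\infty$, yields approximate tensorization of entropy for $\mu$ with a constant that no longer grows with $n$. Book-keeping through this integration is where the larger parameter $\beta=\sum_{p\ge2}\sqrt{2^p p^3\log p}\,\beta_p$ enters: along the localization the measure can be pushed toward the boundary of the cube, and controlling the covariance (equivalently the per-spin variance) of the heavily tilted $\mu_t$ uniformly requires the $2^p$-weighted norm in place of the plain one. This gives item 1 with $C=O_\beta(1)$.

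Items 2 and 3 are then routine. Approximate tensorization of entropy with a dimension-free constant $C$ is equivalent, up to the factor $n$, to the modified log-Sobolev inequality for the Glauber dynamics (recorded in \cref{sec:preliminaries}), so $\rho=\Omega(1/(Cn))=\Omega_\beta(1/n)$. Finally, the MLSI gives exponential decay of relative entropy along the dynamics at rate $\rho$, and since $\DKL{\mu_0}{\mu}\le\log(1/\min_\sigma\mu(\sigma))$ from any start, Pinsker's inequality yields $\tau_{\operatorname{mix}}(\epsilon)=O\!\big(\rho^{-1}(\log\log(1/\min_\sigma\mu(\sigma))+\log(1/\epsilon))\big)=O_\beta\!\big(n(\log\log(1/\min_\sigma\mu(\sigma))+\log(1/\epsilon))\big)$, with the $\log\log$ appearing simply because it is the logarithm of the worst-case initial relative entropy.

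\medskip\noindent The main obstacle is the second paragraph: converting fractional log-concavity (which is all \cref{thm:general-intro} directly provides) into a dimension-free tensorization constant, by genuinely exploiting that the Hessian bound — and hence the local functional inequality — survives under all external fields and pinnings, and making the discrete localization argument rigorous with the constant tracking that produces the $2^p$-weighted $\beta$. Everything else is either a random-matrix estimate or a black-box application of results stated earlier or in the preliminaries.
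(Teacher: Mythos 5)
You correctly identify the crux of the problem --- the output of \cref{thm:general-intro} is a tensorization constant of order $n^{O(\beta)}$, and the whole content of \cref{thm:mix p spin main} is to upgrade this to a dimension-free $O_\beta(1)$. Your first paragraph (verifying the smoothness hypothesis with the $\sqrt{p^3\log p}$ weights and a union bound over the $2^n$ vertices) is consistent with what the paper does (it appeals to \cite[Lemma~6.1]{pspin} for this). Items 2 and 3 being routine consequences of item 1 is also correct. The gap is in your second paragraph.

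Your proposed mechanism --- a field-dynamics/localization argument tilting $\mu$ by external fields, with the observation that the Hessian bound is invariant under tilts and pinnings so that a ``uniform $O(\beta_0)$ local inequality holds at every scale'' --- does not yield a dimension-free constant, and the paper explicitly flags this. If every scale $k$ of the chaining contributes the \emph{same} entropic independence constant $1+B\beta_0$, then the product over scales is $\prod_{k=2}^{n}(1-(1+B\beta_0)/k) \approx n^{-(1+B\beta_0)}$, which is exactly the $n^{B\beta_0}$ loss you are trying to remove. Invariance of the Hessian bound under worst-case pinnings buys you nothing here; in fact the paper gives an explicit obstruction (the pure $3$-spin example) where an adversarial pinning of all but two spins produces a subsystem whose Hessian has operator norm $\Theta(1)$, so no uniform worst-case improvement is possible. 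The paper also notes in its related-work discussion that the localization-based approaches (\cite{anari2021entropic}, \cite{chen2022localization}) could \emph{not} prove dimension-free approximate tensorization of entropy even for the SK model, so this is not merely a bookkeeping issue in your sketch.

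What actually makes the constant dimension-free is an \emph{average-case} local-to-global argument (\cref{thm:avg local to global entropy contraction}, adapted from \cite{avg-local-global}): one averages over a \emph{uniformly random ordering} of the pinnings, and the key estimate (\cref{lemma:p spin main}) shows that for a random subset of $k$ free spins, the effective Hessian norm at scale $k$ is typically of order $(k/N)^{1/2-\alpha}$ rather than $\Theta(1)$. This decaying $\beta_k$ is what turns $\sum_k \beta_k/k$ from $\Theta(\beta\log n)$ into $O(\beta)$. Moreover, the argument must contend with two issues your sketch does not touch: the norm-decay must hold with failure probability exponentially small in $n$ (not $k$), to survive a union bound over spin configurations $\sigma \in \{\pm 1\}^n$; and because $\|\nabla^2 H_k\|_{\operatorname{op}}$ is heavy-tailed as a function of the random subset, the resulting expectation $\E_\pi[\exp(\sum_k \|\nabla^2 H_k\|/k)]$ has to be controlled by a carefully weighted H\"older inequality over the scales, which is where the real work of \cref{lemma:p spin main} lies. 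Finally, your explanation of where the $2^p$ weighting in $\beta$ comes from (``pushing toward the boundary of the cube'') is not the paper's reason either: it arises combinatorially, from summing $\binom{p}{s'}^{1/2}$ over $s'$ when bounding the contribution of the terms $\Delta_1$ of positive degree in the unpinned variables (\cref{claim:s'ge1}).
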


\paragraph{Downstream Applications.} The results we establish have a number of interesting downstream applications. We discuss a few in particular:
\begin{itemize}
    \item \textbf{Consequences of the MLSI.} We obtain bounds on the Modified Log-Sobolev constant which have a number of useful consequences besides mixing time analysis. First of all, the MLSI gives more precise control on the dynamics of mixing in the form of \emph{reverse hypercontractivity} --- see \cite{mossel2013reverse,gross2014hypercontractivity}. Also, the MLSI implies subgaussian concentration of Lipschitz functions and transport-entropy inequalities \cite{van2014probability,gross2014hypercontractivity}. For example, we have established for the first time Lipschitz subguassian concentration for the high temperature mixed $p$-spin model.
    \item \textbf{Spectral independence from coupling.} Existence of a contractive coupling was previously known to imply spectral independence \cite{liu2021coupling,blanca2021mixing}. Since contractive couplings imply a spectral gap, we recover this result from the universality of spectral independence (\cref{thm:poincare-to-si}). The resulting proof is much shorter and has some notable quantitative advantages --- we discuss this more in \cref{s:simplified}. 
    \item \textbf{Learning graphical models.} It was recently observed that there is a useful connection between approximate tensorization of entropy and classical algorithms for learning distributions from data \cite{koehler2022statistical}. Combining this connection with our results, we are able to dramatically improve the state of the art results for learning some types of graphical models from samples (\cref{thm:learning ising}). For example, we are able to learn the high-temperature SK model in total variation distance from  $n^{3 + O(\beta)}$  samples in polynomial time, which is close to the best information-theoretic guarantee known \cite{devroye2020minimax}. In comparison, the best previous algorithmic results \cite{klivans2017learning,vuffray2016interaction,wu2019sparse} could guarantee sample and time complexity of only $e^{O(\beta \sqrt{n})}$.
    \item \textbf{Identity testing.} \textcite{BCSV21} recently proved implications of approximate tensorization of entropy for identity testing of high-dimensional distributions in the \emph{coordinate oracle} and \emph{subcube oracle} query models. Combining our new results with their framework gives improved sample complexities for solving many testing problems (e.g.\ \cref{cor:SK model identity testing}) -- see \cref{sec:identity} for more details. 
\end{itemize}
\subsection{Techniques}
\paragraph{Universality of spectral independence.} Interestingly, the proof of this result is very short once the correct definitions are in place, so we refer the reader to \cref{sec:general-facts}. 

\paragraph{Rapid mixing for smooth Hamiltonians.} The proof of this result combines three key ingredients: (1) universality of spectral independence, (2) a recursive spectral gap estimate of \textcite{pspin}, and (3) a large body of existing tools from the high-dimensional expanders framework. 

First, we want to prove that under the smoothness condition, the Glauber dynamics has an $\Omega(1/n)$ spectral gap. In the work \cite{pspin}, the authors developed a recursive method to prove spectral gap bounds for high temperature systems, \emph{provided} that the spectral gap of systems with $n$ spins is not much worse than that with $n - 1$ spins. To make their argument work, they needed to prove a corresponding ``continuity estimate'', but their argument used facts specific to the mixed $p$-spin model (random $H$). Our key insight is that combining the universality of spectral independence with Oppenheim's trickle-down theorem \cite{Opp18} and the local-to-global argument \cite{KO18,AL20} proves that such continuity estimates \emph{automatically} hold in a very general setting (\cref{thm:continuity}). This lets us prove the spectral gap inequality for all models satisfying the smoothness condition.

By itself, the spectral gap bound only implies $\widetilde{O}(n^2)$ time mixing. To improve the mixing time bound we appeal to the universality of spectral independence again, which lets us derive fractional log-concavity of the generating polynomial \cite{alimohammadi2021fractionally}. This implies entropic independence which in turn implies the approximate tensorization of entropy estimate via the result of \cite{anari2021entropic}. The MLSI and mixing time bound are immediate consequences (see \cref{sec:preliminaries}).

\paragraph{Improved bound for random interactions (mixed $p$-spin model).}
First, we discuss the special case of the Sherrington-Kirkpatrick (SK) model (pure $2$-spin model).
For sufficiently high temperature i.e.\ $1/\beta \gg 1/\epsilon,$ the interaction matrix has operator norm at most $1+\epsilon$, thus the discussion above allows us to directly lower bound the modified log Sobolev constant by $ 1/n^{1+\epsilon}.$ However, this is still a factor of $n^{\epsilon}$ away from the conjectured modified log Sobolev constant of $1/n,$ when the interaction terms are i.i.d.\ Gaussians. How can we close the gap?

In the case of the SK model, 
after pinning $N-k$ spins, the resulting Hamiltonian $H^{(k)}$ is a $k\times k$ matrix with i.i.d.\ Gaussian entries of variance $1/N$, thus the corresponding operator norm scales approximately like $ \sqrt{k/N},$ so that the local to global argument gives
\[\text{MLSI constant} \geq \exp\parens*{-\sum_{k}\frac{(1+\|H^{(k)}\|_{\operatorname{op}})}{k}} \approx \exp\parens*{-\log n + \sum_{k} \frac{1}{\sqrt{kN}}} = n^{-1} \exp(-O(1)),\]
which is the desired lower bound for the modified log-Sobolev constant.

Unfortunately, when higher degree terms are present, pinning might still result in a subsystem with essentially the same operator norm. 
To see this, consider a pure $3$-spin model, i.e.\ the Hamiltonian only contains terms of degree $3$:
\[H(\sigma) = \frac{\beta}{N}\sum_{ijk} g_{ijk} \sigma_i \sigma_j \sigma_k,\]
where $g_{ijk}$ are standard i.i.d.\ Gaussians and $\beta > 0$ is a small constant. It is well known that $\norm{H} = \Theta(1)$ with high probability. Now, for any $\sigma \in \set{\pm 1}^{n}$, observe that pinning $\sigma_{[n]\setminus \set{1,2}}$, 
results in a subsystem on two spins with interaction matrix $H^{(2)}$ given by $H^{(2)}_{1,2} = \frac{1}{N}\sum_{k\not\in \set{1,2}} g_{12k} \sigma_k$, and note that with the choice of pinning $\sigma_k := \text{sign}(g_{12k})$,  
\[H^{(2)}_{1,2} = \frac{1}{N}\sum_{k\not\in \set{1,2}} \abs{g_{12k}} = \Theta(1) \text{ w.h.p}.\]

To circumvent the existence of these ``bad pinnings'', we switch to an ``average-case'' version of the local to global argument \cite{alimohammadi2021fractionally,avg-local-global}. Roughly speaking, for each $i\in [N]$ and $\sigma \in \set{\pm 1}^{N\setminus \{i\}},$ we want to establish that when pinning a \emph{random} subset of $N-k$ spins according to $\sigma,$ the operator norm of the conditional subsystem is sufficiently small, e.g.\ it scales like $\sqrt{k/N}$ with high probability. We emphasize that while the average case argument allows us to consider subsystems resulting from pinning a random subset of spins according to $\sigma$, we must still consider \emph{all} possible $\sigma$; in particular, if we are to use the union bound over $\sigma$, we need to avoid bad events with exponentially (in $n$) small probability, even though the quantities involved (norms of subsystems on $k$ spins) have order governed by $k$. Moreover,  unlike in \cite{avg-local-global}, there are choices of $i$ and $\sigma$ for which we cannot expect the ``good event'' of avoiding a bad link to hold with a very high probability, e.g. $1-1/N^{10}$. Indeed, going back to the pure $3$-spin example above, for the choice of $\sigma$ there and for $i=1$, $H^{(2)}_{1,2} = \Theta(1)$, so that with probability $\Omega(k/N)$, a random pinning of $N-k$ spins includes vertex $2$ among the free spins and thus results in a subsystem with operator norm $\Theta(1)$ (instead of $O(\sqrt{k/N})$, as we might hope for). To summarize, we have to address two challenges: (i) prove a ``norm-decay'' statement for random $k\times k$ sub-matrices which holds with probability exponentially small in $n$ (as opposed to $k$), and (ii) overcome the very heavy-tailed nature of the norm of random $k\times k$ sub-matrices for use in the average case local-to-global argument \cite{alimohammadi2021fractionally, avg-local-global}. We note that while the expected norm-decay of random submatrices of a matrix has been extensively studied in the random matrix theory literature (see, e.g.~\cite{rudelson2007sampling}), our requirement that the probability bounds hold with exponentially small probability in $n$ (as opposed to $k$) makes existing techniques ineffective and needs a completely different argument.    

We conclude by briefly discussing the ideas needed to overcome these challenges. As later detailed in \cref{sec:gibbs}, the average case local-to-global argument requires us to show that for any $\sigma$, in a random ordering of pinnings according to $\sigma$, a quantity roughly of the form $\E{\exp(\sum_k \|\nabla^2 H_k\|_{\operatorname{op}}/k)}$ is bounded above by an absolute constant; here, $H_k$ is the induced Hamiltonian on the $k$ unpinned spins and the randomness is induced by the random ordering. Using standard concentration estimates for the norm of random sub-Gaussian matrices, we can bound the contribution of terms of degree at least $3$ in the induced Hamiltonian by $(k/N)^{\Omega(1)}$, so that the main challenge lies in bounding the contribution of quadratic term in the induced Hamiltonian. As explained above, this term can have operator norm $1$ with very large probability (at least $(k/N)$) and hence, we need a significantly more careful analysis of the average-case local to global iteration than in \cite{avg-local-global}. In particular, we show that except with exponentially small (in $n$) probability over the randomness of the Hamiltonian, for all $\sigma$, over the randomness of the random ordering of pinnings, $\|\nabla^2 H_k\|_{\operatorname{op}}$ is bounded by a small constant $c$ with probability $1$ (this is straightforward, given existing results), and crucially, it is bounded by $(k/N)^{1/2 - \alpha}$ with probability at least $1-(k/N)^{\alpha}$, for some $\alpha \in [c,1/2)$. Even then, we have to control $\exp(\sum_k \|\nabla^2 H_k\|_{\operatorname{op}}/k)$, where the challenge is in the complex dependencies among $H_k$ (arising from one random permutation of the pinning order). We achieve this by breaking terms based on the typical and tail behavior, and combine these heavy-tailed random variables in a careful way with an appropriately weighted H\"older's inequality. 


\subsection{Related Work}
Many works have studied techniques for analyzing discrete Markov chains which were inspired
by Bakry-Emery theory and related notions, see e.g.\ \cite{erbar2017ricci,ollivier2009ricci,eldan2017transport,caputo2009convex}
and references within for a few examples. These are quite different in nature from our results,
and in particular they have not been used to obtain results for systems with spin glass interactions like the Sherrington-Kirkpatrick and mixed $p$-spin models from statistical physics (see e.g.\ \cite{talagrand2010mean} for more background on these models). 

There have been many recent works studying sampling problems in spin glass models. For spherical spin glasses, \textcite{gheissari2019spectral} showed that the Bakry-Emery criterion can be applied to prove mixing of the Langevin dynamics at high temperature. For the Sherrington-Kirkpatrick (SK) model on the hypercube, after a line of recent works \cite{bauerschmidt2019very,eldan2021spectral,anari2021entropic} the optimal $O(n \log n)$ time mixing bound is known up to inverse temperature $\beta < 0.25$ (see also \cite{chen2022localization} for an alternative proof). All of these works only need that the interaction matrix has sufficiently small operator norm, which is exactly the same as the assumption of \cref{thm:general-intro} specialized to the case of Ising models. 
Interestingly, although the proof in \cite{anari2021entropic} is partially based on the high-dimensional expansion approach, none of these works could prove that approximate tensorization of entropy holds (with a dimension-free constant). The present work finally proves that approximate tensorization of entropy holds for sufficiently small $\beta$.  

Using a different algorithmic approach, the works \cite{alaoui2022sampling,celentano2022sudakov} constructed a sampler which sample up to the conjectured sharp threshold $\beta < 1$ in the SK model, albeit in a weaker metric (sublinear Wasserstein). Finally, as discussed above, the recent work of \textcite{pspin} which we build upon  proved spectral gap for the mixed $p$-spin model for sufficiently small $\beta$ (i.e.\ at sufficiently high temperature).  

The works \cite{eldan2022log,alimohammadi2021fractionally} and references within studied a few related analogues of (semi, fractional, etc.) ``log-concavity'' on the discrete hypercube. In this work, we have explicitly showed one such notion (fractional log-concavity of the generating polynomial) follows from a condition on the Hessian of the log-likelihood in the spirit of strong log-concavity. 

	\section{Preliminaries}
\label{sec:preliminaries}
In this paper, we use the perspective on sampling arising from the theory of generating polynomials and high-dimensional expansion. This means, for example, that the Glauber dynamics on $\{\pm 1\}^n$ is interpreted via homogenization as the $n \leftrightarrow n - 1$ down-up walk on $\binom{[2n]}{n}$ (see \cref{s:functional-inequalities} below). We discuss this and other important background in this preliminaries section.
\subsection{Down-Up Walk, Links, and Trickle-Down}
\begin{definition}[Down operator]\label{def:down-operator}
	For a ground set $\Omega$, and  $\card{\Omega} \geq k\geq \l$,  the down operator $D_{k\to \l}\in \R^{\binom{\Omega}{k}\times \binom{\Omega}{\l}}$ is defined to be
	\[ 
		D_{k\to \l}(S, T)=\begin{cases}
			\frac{1}{\binom{k}{\l}}&\text{ if }T\subseteq S,\\
			0&\text{ otherwise}.\\
		\end{cases}
	\]
\end{definition}
Note that $D_{k\to \l}D_{\l\to m}=D_{k\to m}$. 

\begin{definition}[Up operator]
	For a ground set $\Omega$, $\card{\Omega}\geq k\geq \l$, and density $\mu:\binom{\Omega}{k}\to \R_{\geq 0}$, the up operator $U_{\l \to k}\in \R^{\binom{\Omega}{\l}\times \binom{\Omega}{k}}$ is defined to be
	\[ 
		U_{\l\to k}(T, S)=\begin{cases}
			\frac{\mu(S)}{\sum_{S'\supseteq T}\mu(S')}&\text{ if }T\subseteq S,\\
			0&\text{ otherwise}.\\
		\end{cases}
	\]
\end{definition}

\begin{definition}[Down-up walk]
	For a ground set $\Omega$, $|\Omega|\geq k\geq \l$, and density $\mu:\binom{\Omega}{k}\to \R_{\geq 0}$, the $k\leftrightarrow \l$ down-up walk is defined by the row-stochastic matrix $D_{k\to \l} U_{\l \to k}$. Similarly, the up-down walk is defined by $U_{\ell \to k} D_{k \to \ell}$.
\end{definition}

\begin{proposition}[{\cite[see, e.g.,][]{KO18,AL20,ALO20}}]
\label{prop:reversibility-down-up}
	The operators $D_{k\to \l}U_{\l\to k}$ and $U_{\l\to k}D_{k\to \l}$ both define Markov chains that are time-reversible and have nonnegative eigenvalues. 
	Moreover $\mu$ and $\mu D_{k \to \ell}$ are respectively their stationary distributions.
\end{proposition}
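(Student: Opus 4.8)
The plan is to reduce the whole statement to one algebraic identity between the down and up operators and then invoke standard Hilbert-space linear algebra. Abbreviate $D := D_{k\to\ell}$, $U := U_{\ell\to k}$, write $\pi := \mu$ for the density on $\binom{\Omega}{k}$, and let $\nu := \mu D$ be its pushforward on $\binom{\Omega}{\ell}$; explicitly $\nu(T) = \frac{1}{\binom{k}{\ell}}\sum_{S\supseteq T}\mu(S)$. (To make $U$ well-defined we restrict throughout to $T$ in the support of $\nu$, i.e.\ with $\sum_{S'\supseteq T}\mu(S') > 0$; sets outside the support carry no mass and may be discarded.) The key claim is the \emph{reversal identity}
\[ \pi(S)\, D(S,T) \;=\; \nu(T)\, U(T,S) \qquad \text{for all } S\in\binom{\Omega}{k},\ T\in\binom{\Omega}{\ell}. \]
One checks this directly: both sides vanish unless $T\subseteq S$, and when $T\subseteq S$ the left side equals $\frac{1}{\binom{k}{\ell}}\mu(S)$, while on the right the factor $\sum_{S'\supseteq T}\mu(S')$ inside $\nu(T)$ cancels the denominator of $U(T,S)$, again leaving $\frac{1}{\binom{k}{\ell}}\mu(S)$.

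Given the reversal identity, stationarity is immediate. First $\pi D = \nu$ holds by definition of $\nu$. Summing the identity over $T$ gives $\sum_T \nu(T)U(T,S) = \pi(S)\sum_{T\subseteq S,\,|T|=\ell}\frac{1}{\binom{k}{\ell}} = \pi(S)$, i.e.\ $\nu U = \pi$. Hence $\pi(DU) = (\pi D)U = \nu U = \pi$ and $\nu(UD) = (\nu U)D = \pi D = \nu$, so $\mu$ and $\mu D$ are stationary for the down-up and up-down walks respectively.

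For reversibility and the spectrum, reinterpret the reversal identity as $\operatorname{diag}(\pi)D = (\operatorname{diag}(\nu)U)^{\mathsf T}$: this says exactly that $D$ and $U$ are mutually adjoint when $\binom{\Omega}{\ell}$ carries the inner product weighted by $\nu$ and $\binom{\Omega}{k}$ the inner product weighted by $\pi$; in operator notation $U = D^\star$, the adjoint of $D\colon L^2(\nu)\to L^2(\pi)$. Consequently $DU = DD^\star$ is self-adjoint on $L^2(\pi)$ and $UD = D^\star D$ is self-adjoint on $L^2(\nu)$, and for a Markov operator self-adjointness in the stationary inner product is precisely the detailed-balance (time-reversibility) condition. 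Moreover $\langle D^\star D f, f\rangle_\nu = \|Df\|_\pi^2 \ge 0$ and symmetrically $\langle DD^\star g, g\rangle_\pi = \|D^\star g\|_\nu^2 \ge 0$, so both operators are positive semidefinite; being Markov, their eigenvalues lie in $[0,1]$.

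There is no real obstacle here — this proposition is classical, which is why the excerpt cites it rather than proving it. The only points requiring a modicum of care are the support restriction that makes $U$ well-defined and tracking the normalization constant $\binom{k}{\ell}$ through the reversal identity; everything else follows formally once that identity is recorded.
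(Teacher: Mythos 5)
Your proof is correct, and it is the standard argument: the reversal identity $\pi(S)D(S,T) = \nu(T)U(T,S)$ encapsulates that $D$ and $U$ are mutual adjoints in the $\pi$- and $\nu$-weighted inner products, from which stationarity, detailed balance, and positive semidefiniteness of $DU = DD^\star$ and $UD = D^\star D$ all follow formally. The paper does not include a proof of this proposition — it cites the prior works — so there is nothing to compare beyond noting that your argument is the one those references use.
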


\begin{definition}[Link]
    Given $\mu$ a distribution on $\binom{\Omega}{k}$, and a set $T \subset \Omega$ with $\card{T} \le k$, we define $\mu_T$, the \emph{induced distribution on the link of $T$}, to be the probability distribution over ${\Omega \setminus \binom{T}{k-\card{T}}}$ given by the conditional law $\mu_T(S') = \P_{S \sim \mu}{S = S' \cup T \mid T \subset S}$.
\end{definition}

We will slightly abuse terminology and often refer to the induced distribution of $\mu$ on the link of $T$ simply as the link of $\mu$ at $T$. 

\paragraph{Oppenheim's Trickle-Down.} Oppenheim's trickle-down theorem inductively bounds the high-dimensional expansion of simplicial complexes, i.e.\ the spectral gap of certain up-down walks.
\begin{theorem}[\cite{Opp18}]\label{thm:oppenheim}
Let $\mu$ be a distribution on $\binom{\Omega}{k}$ with $k \ge 3$, let $P$ denote the corresponding $1 \leftrightarrow k$ up-down walk, and suppose that $\lambda_2(P) < 1$.
For $i \in \Omega$, let $\mu_i$ denote the link of $\mu$ at $i$,
and let $P_i$ denote the corresponding $1 \leftrightarrow k - 1$ up-down walk.
Suppose that for all $i \in \Omega$, $\lambda_2(P_i) \le \lambda$. Then
\[ \lambda_2(P) \le \frac{(1 - 2/k)\lambda}{1 - \lambda}.\]
\end{theorem}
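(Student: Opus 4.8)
The plan is to follow Oppenheim's argument \cite{Opp18}: first reduce the claim about the $1 \leftrightarrow k$ up-down walk to the corresponding statement for the weighted $1$-skeleton walk, and then combine Garland's local-to-global identity with an eigenfunction comparison.

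\textbf{Reduction to the $1$-skeleton walk.} Writing $P = U_{1 \to k} D_{k \to 1}$ out entrywise shows it is exactly $\tfrac1k$-lazy: if $Q$ is the random walk on the weighted graph on $\Omega$ with edge weights $w(\{u,v\}) = \sum_{S \supseteq \{u,v\}} \mu(S)$ (so $Q$ depends only on the pairwise marginals of $\mu$), then $P = \tfrac1k I + \tfrac{k-1}{k} Q$. The same computation in the link gives $P_i = \tfrac{1}{k-1} I + \tfrac{k-2}{k-1} Q_i$, where $Q_i$ is the $1$-skeleton walk of $\mu_i$. Hence $P$ and $Q$ are simultaneously diagonalizable with $\lambda_2(P) = \tfrac1k + \tfrac{k-1}{k}\lambda_2(Q)$; the hypothesis $\lambda_2(P_i) \le \lambda$ is equivalent to $\lambda_2(Q_i) \le \lambda' := \tfrac{(k-1)\lambda - 1}{k-2}$ (using $k \ge 3$, and $\lambda < 1$, which we may assume since otherwise the asserted bound is $\ge 1 \ge \lambda_2(P)$ or ill-defined); and a one-line rearrangement shows the target bound $\lambda_2(P) \le \tfrac{(1 - 2/k)\lambda}{1-\lambda}$ is \emph{equivalent} to $\lambda_2(Q) \le \tfrac{\lambda'}{1-\lambda'}$. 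So it suffices to prove: if $\lambda_2(Q) < 1$ and $\lambda_2(Q_i) \le \lambda'$ for all $i$, then $\lambda_2(Q) \le \lambda'/(1-\lambda')$.

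\textbf{Garland identities and the comparison.} Let $f : \Omega \to \R$ be a top eigenfunction of $Q$, so $Qf = \gamma f$ with $\gamma = \lambda_2(Q) < 1$ and $f \perp \mathbf 1$ in $L^2(\pi_1)$, where $\pi_1$ is the vertex marginal of $\mu$ (and the stationary law of $Q$). Three identities drive the proof. (i) The one-step law of $Q$ out of a vertex $v$ is precisely the vertex marginal $\pi_{1,v}$ of the link $\mu_v$; thus the local average $c_v := \mathbb{E}_{u \sim \pi_{1,v}}[f(u)]$ equals $(Qf)(v) = \gamma f(v)$, so $\sum_v \pi_1(v) c_v^2 = \gamma^2 \|f\|^2$. (ii) The same one-step identity applied to $f^2$ gives $\sum_v \pi_1(v)\, \|f\|_{L^2(\pi_{1,v})}^2 = \|f\|^2$. (iii) Expanding Dirichlet forms through the top $k$-faces of $\mu$, together with the combinatorial identity $\sum_v \sum_{S \supseteq \{v,u,u'\}}\mu(S) = (k-2)\sum_{S \supseteq \{u,u'\}}\mu(S)$, gives $\sum_v \pi_1(v) \langle f, Q_v f\rangle_{\pi_{1,v}} = \langle f, Qf\rangle_{\pi_1} = \gamma\|f\|^2$. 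Now split $f = c_v \mathbf 1 + g_v$ on each link, with $g_v \perp \mathbf 1$ in $L^2(\pi_{1,v})$; then $\langle g_v, Q_v g_v\rangle \le \lambda_2(Q_v)\|g_v\|^2 \le \lambda'\|g_v\|^2$, so $\langle f, Q_v f\rangle_{\pi_{1,v}} \le c_v^2 + \lambda'\big(\|f\|_{L^2(\pi_{1,v})}^2 - c_v^2\big)$. Averaging over $v$ with weights $\pi_1(v)$ and inserting (i)--(iii) yields $\gamma \le \lambda' + (1-\lambda')\gamma^2$, i.e. $(\gamma - 1)\big((1-\lambda')\gamma - \lambda'\big) \ge 0$. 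Since $\gamma < 1$ and $\lambda' < 1$, this forces $\gamma \le \lambda'/(1-\lambda')$, which by the reduction is the theorem.

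\textbf{Where the difficulty lies.} The only nonroutine step is identity (iii) --- Garland's bookkeeping that summing the links' quadratic forms (weighted by the ambient vertex measure) reconstructs the global quadratic form, plus its analogue for the norms in (ii). The one delicate point to keep track of is the role of the hypothesis $\lambda_2(P) < 1$ (equivalently $\gamma < 1$): it is exactly what permits dividing the factored inequality by $(\gamma - 1) < 0$ while preserving the direction of the inequality. Without it one obtains only the unconditional dichotomy ``$\gamma = 1$, or $\gamma \le \lambda'/(1-\lambda')$,'' which is why the connectivity assumption appears in the statement.
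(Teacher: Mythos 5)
Your proof is correct, but it is doing strictly more work than the paper does: the paper states \cref{thm:oppenheim} as a citation of \cite{Opp18} and supplies no proof, only the ``Lazy vs active walk'' remark immediately after, which is exactly your first reduction run in reverse ($P = \tfrac1k I + \tfrac{k-1}{k}Q$, $P_i = \tfrac{1}{k-1}I + \tfrac{k-2}{k-1}Q_i$, $\lambda' = \tfrac{(k-1)\lambda-1}{k-2}$, and the equivalence of the two target bounds). What you add beyond the paper is a self-contained Garland-style proof of the active form $\lambda_2(Q) \le \lambda'/(1-\lambda')$.

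That Garland argument is sound. Identity (i) --- that $Q(v,\cdot)$ coincides with the vertex marginal $\pi_{1,v}$ of $\mu_v$ --- checks out directly from $Q(v,u) = \sum_{S\supseteq\{v,u\}}\mu(S)\big/\big((k-1)\sum_{S\ni v}\mu(S)\big)$ and $\pi_{1,v}(u) = \tfrac{1}{k-1}\sum_{S\supseteq\{v,u\}}\mu(S)\big/\sum_{S\ni v}\mu(S)$. Identity (ii) is stationarity of $\pi_1$ applied to $Qf^2$. Identity (iii) is exactly the triangle-counting identity $\sum_v\sum_{S\supseteq\{u,u',v\}}\mu(S) = (k-2)\sum_{S\supseteq\{u,u'\}}\mu(S)$ reweighted by the $\tfrac{1}{k(k-1)(k-2)}$ normalization, which telescopes cleanly to $\langle f, Qf\rangle_{\pi_1}$. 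The orthogonal decomposition $f = c_v\mathbf 1 + g_v$ and the use of $\lambda_2(Q_v)$ on the link is the variational bound on the orthogonal complement of constants (valid since each $Q_v$ is reversible with respect to $\pi_{1,v}$). The quadratic $\gamma \le \gamma^2 + \lambda'(1-\gamma^2)$ then factors as you say, and dividing by $1-\gamma > 0$ is precisely where the ergodicity hypothesis $\lambda_2(P) < 1$ enters, as you correctly flag. The one remaining loose end, which you dispose of correctly, is the degenerate regime $\lambda \ge 1$: there the right-hand side is $\ge 1$ or meaningless, so one may assume $\lambda < 1$, which forces $\lambda' < 1$ and makes the final division by $1-\lambda'$ legal. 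In short: correct, same translation step as the paper, plus a full reconstruction of Oppenheim's proof that the paper delegates to \cite{Opp18}.
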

\begin{remark}[Lazy vs active walk]
With our definition, the $1 \leftrightarrow k$ up-down walk has a probability of $1/k$ of staying at the same vertex. If $P$ denotes this ``lazy'' walk and $P'$ denotes the ``active'' walk which always moves to a new vertex, we have $P = \frac{1}{k} I + \frac{k - 1}{k} P'$ so $\lambda_2(P) = \frac{1}{k} + \frac{k - 1}{k} \lambda_2(P')$.
The more common ``active'' form of trickle-down states that if $\lambda_2(P') < 1$ and $\lambda_2(P'_i) \le \lambda'$ for all $i$, then $\lambda_2(P') \le \frac{\lambda'}{1 - \lambda'}$. Noting that $\lambda_2(P) < 1$ if and only if $\lambda_2(P') < 1$ and $\lambda_2(P_i) \leq \lambda$ if and only if $\lambda_2(P_i') \leq \lambda' := (\lambda - (1/k-1))\cdot (k-1)/(k-2)$, we have
\begin{align*} \lambda_2(P) 
\le \frac{1}{k} + \frac{k - 1}{k} \frac{\lambda'}{1 - \lambda'}
&= \frac{1}{k} + \frac{k - 1}{k} \frac{\frac{k - 2}{k - 1} \lambda'}{\frac{k - 2}{k - 1} - \frac{k - 2}{k - 1} \lambda'} \\
&= \frac{1}{k} + \frac{k - 1}{k} \frac{\lambda - 1/(k - 1)}{1 - \lambda}
= \frac{1}{k} + \frac{(k - 1) \lambda - 1}{k - k\lambda}
=  \frac{(1 -2/k) \lambda}{1 - \lambda},
\end{align*}
matching our statement of the trickle-down theorem. 
\end{remark}

\subsection{Generating Polynomial, Tilts, and Fractional Log-Concavity}

\begin{definition}
The multivariate generating polynomial $g_{\mu} \in \R[z_1,\dots, z_n]$ associated to a density $\mu\colon 2^{[n]}\to \R_{\geq 0}$ is given by
\[g_{\mu}(z_1,\dots, z_n) := \sum_{S}\mu(S)\prod_{i\in S}z_i = \sum_{S}\mu(S)z^{S},\]
\end{definition}
Here we have used the standard notation that for $S \subseteq [n]$, $z^S = \prod_{i\in S}z_i$.

\begin{definition}[Measure tilted by external field]\label{def:measure-tilt}
For a distribution $\mu$ on $2^{[n]}$ and vector $\lambda = (\lambda_1,\dots, \lambda_n) \in \R^{n}_{>0}$, which we refer to as the \emph{external field}, we denote the measure $\mu$ tilted by external field $\lambda$ by the notation $\lambda \ast \mu$, formally defined as
\[\P_{\lambda \ast \mu}{S} = \frac{1}{Z_{\lambda}} \mu(S)\cdot \prod_{i \in S}\lambda_i,\]
where the normalizing constant $Z_{\lambda}$ is defined so that $\lambda \ast \mu$ is a probability measure.
Note that for any $(z_1,\dots, z_n) \in \R^{n}_{\geq 0}$, 
\[g_{\lambda \ast \mu}(z_1,\dots, z_n) \propto g_{\mu}(\lambda_1 z_1,\dots, \lambda_n z_n).\]
\end{definition}

In \cite{alimohammadi2021fractionally}, the notion of fractional log-concavity of the multivariate generating polynomial was developed, generalizing the concept of log-concave polynomials (see e.g.\ \cite{anari2019log}). 

\begin{definition}[Fractional log-concavity]
Consider a homogeneous distribution $\mu: \binom{[n]}{k} \to \R_{\geq 0}$ and let  $g_{\mu}(z_1, \dots, z_n)$ be its multivariate generating polynomial. For $\alpha \in [0,1]$, we say that $\mu$ is $\alpha$-fractionally log-concave ($\alpha$-FLC) if 
$\log g_{\mu}(z_1^{\alpha}, \dots, z_n^{\alpha})$ is concave, viewed as a function over $\R_{\geq 0}^{n}$. 
\end{definition}

\subsection{Spectral and Entropic Independence}
\begin{definition}[Correlation matrix] \label{def:corr}
	Let $\mu$ be a probability distribution over $2^{[n]}$. 
Its \textit{correlation matrix} $\corMat_{\mu} \in \R^{n\times n}$  is defined by
\[\corMat_{\mu} (i,j) = \begin{cases} 1 -\P_{\mu}{i} &\text{ if } j=i, \\ \P_{\mu}{j \given i} - \P_{\mu}{j} &\text{ otherwise.}\end{cases} \]
\end{definition}

\begin{definition}[Spectral Independence]
For $\eta \geq 0$, a distribution 
$\mu: 2^{[n]} \to \R_{\geq 0} $ is said to be $\eta$-spectrally independent (at the link $\emptyset$) if
\[\lambda_{\max}(\corMat_{\mu}) \leq \eta.\] 
\end{definition}

\begin{remark}
The original definition of spectral independence in \cite{ALO20} imposes such a requirement on $\mu$ as well as all of its links. Here, we follow the convention in \cite{anari2021entropic} and use the term spectral independence to refer only to a spectral norm bound on the correlation matrix of $\mu$, with the understanding that in applications, we will require spectral independence of all links of $\mu$ as well.  
\end{remark}

\begin{fact}[Remark 70 of \cite{alimohammadi2021fractionally}]\label{fact:si-flc}
A distribution $\mu$ on $\binom{[n]}{k}$ is $\eta$-spectrally independent iff 
\[ \nabla^2 \log g_{\mu}(z_1^{1/\eta},\ldots,z_n^{1/\eta})\Big|_{z = \vec{1}} = (1/\eta)^2 D \corMat_{\mu} - (1/\eta) D \preceq 0 \]
where $D$ is the diagonal matrix with entries $D_{ii} = \P_{\mu}{i}$.

Moreover, $\mu$  is $1/\eta$-FLC iff $\lambda \ast \mu$ is $\eta$-spectrally independent for all external fields $\lambda \in \mathbb{R}_{\ge 0}^n$.
\end{fact}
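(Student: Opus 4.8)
The plan is to reduce both assertions to a single Hessian computation, performed in logarithmic coordinates, in which $\log g_\mu$ becomes the cumulant generating function of the random indicator vector $X := \mathbf{1}_S$ of a sample $S \sim \mu$ (so $X_i \in \{0,1\}$ and $X_i = 1$ iff $i \in S$). First, for the Hessian identity, I would introduce $\Lambda(y) := \log g_\mu(e^{y_1},\dots,e^{y_n}) = \log \E{\exp(\sum_{i \in S} y_i)}$, the log-MGF of $X$, so that $\nabla \Lambda(\vec{0}) = \E{X}$ has $i$-th entry $\P_{\mu}{i}$ (hence $\diag(\nabla\Lambda(\vec{0})) = D$) and $\nabla^2 \Lambda(\vec{0}) = \operatorname{Cov}(X)$. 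Using $X_i^2 = X_i$, a one-line entrywise check shows $\operatorname{Cov}(X) = D\corMat_{\mu}$: off the diagonal, $\P_{\mu}{i,j} - \P_{\mu}{i}\P_{\mu}{j} = \P_{\mu}{i}(\P_{\mu}{j \given i} - \P_{\mu}{j})$, and on the diagonal, $\P_{\mu}{i} - \P_{\mu}{i}^2 = \P_{\mu}{i}(1 - \P_{\mu}{i})$. Since $\log g_\mu(z_1^{1/\eta},\dots,z_n^{1/\eta}) = \Lambda((1/\eta)\log z)$, applying the elementary chain-rule identity $\partial_{z_i}\partial_{z_j}[\psi(\log z)]\big|_{z = \vec{1}} = (\nabla^2 \psi)_{ij}(\vec{0}) - \delta_{ij}(\nabla \psi)_i(\vec{0})$ with $\psi(y) = \Lambda(y/\eta)$ produces
\[ \nabla^2 \log g_\mu(z_1^{1/\eta},\dots,z_n^{1/\eta})\Big|_{z=\vec{1}} = \frac{1}{\eta^2}\operatorname{Cov}(X) - \frac{1}{\eta}\diag(\E{X}) = \frac{1}{\eta^2}D\corMat_{\mu} - \frac{1}{\eta}D, \]
which is the claimed formula.

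Next, for the equivalence with $\eta$-spectral independence, I may assume $\P_{\mu}{i} > 0$ for all $i$ (a coordinate with $\P_{\mu}{i} = 0$ is degenerate and can be dropped), so $D \succ 0$. Then $D\corMat_{\mu} = \operatorname{Cov}(X)$ is symmetric, and $D^{-1/2}(D\corMat_{\mu})D^{-1/2} = D^{1/2}\corMat_{\mu}D^{-1/2}$ is a symmetric matrix similar to $\corMat_{\mu}$, hence with the same eigenvalues. Conjugating by $D^{\pm 1/2}$ and clearing the positive factor $\eta$ yields the chain $\lambda_{\max}(\corMat_{\mu}) \le \eta \iff D^{1/2}\corMat_{\mu}D^{-1/2} \preceq \eta I \iff D\corMat_{\mu} \preceq \eta D \iff \frac{1}{\eta^2}D\corMat_{\mu} - \frac{1}{\eta}D \preceq 0$, which together with the formula above is the first ``iff''.

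Finally, for the ``moreover'', I would recall that a $C^2$ function is concave on $\R^{n}_{\geq 0}$ iff its Hessian is negative semidefinite throughout $\R^{n}_{>0}$ (extending to the boundary by continuity, since $g_\mu > 0$ there), so $\mu$ is $1/\eta$-FLC iff $\nabla^2 \log g_\mu(z_1^{1/\eta},\dots,z_n^{1/\eta}) \preceq 0$ for all $z \in \R^{n}_{>0}$. I would then fix $a \in \R^{n}_{>0}$, set $\lambda := (a_1^{1/\eta},\dots,a_n^{1/\eta})$, and use the external-field identity $g_{\lambda \ast \mu}(w) \propto g_\mu(\lambda_1 w_1,\dots,\lambda_n w_n)$ (\cref{def:measure-tilt}): substituting $z_i = a_i w_i$ gives $z_i^{1/\eta} = \lambda_i w_i^{1/\eta}$, hence $\log g_\mu(z_1^{1/\eta},\dots,z_n^{1/\eta}) = \log g_{\lambda \ast \mu}(w_1^{1/\eta},\dots,w_n^{1/\eta}) + \mathrm{const}$, and since $z = A w$ with $A := \diag(a) \succ 0$, the chain rule gives
\[ \nabla^2_z \log g_\mu(z_1^{1/\eta},\dots,z_n^{1/\eta})\Big|_{z=a} = A^{-1}\left(\nabla^2_w \log g_{\lambda \ast \mu}(w_1^{1/\eta},\dots,w_n^{1/\eta})\Big|_{w=\vec{1}}\right)A^{-1}. \]
This is $\preceq 0$ iff the inner matrix is, iff (by the first part applied to $\lambda \ast \mu$) $\lambda \ast \mu$ is $\eta$-spectrally independent; as $a$ ranges over $\R^{n}_{>0}$ so does $\lambda$, and since $\eta$-spectral independence is a closed condition in $\lambda$ (restricting to the support of $\lambda$ if necessary), this extends to all $\lambda \in \R^{n}_{\geq 0}$, giving the claim.

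The hard part is really just the Hessian computation in the first step --- in particular producing the diagonal correction term $-\frac{1}{\eta}D$ from the $z \mapsto \log z$ substitution and recognizing $\operatorname{Cov}(X) = D\corMat_{\mu}$. The second step is routine but must go through a similarity transform because $\corMat_{\mu}$ (unlike $D\corMat_{\mu}$) is not symmetric, and the third step is then essentially bookkeeping around the external-field identity.
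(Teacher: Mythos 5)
Your proof is correct. The paper itself does not prove this fact but cites it as Remark 70 of \cite{alimohammadi2021fractionally}, and your argument — identifying $\log g_\mu(e^{y_1},\ldots,e^{y_n})$ as the cumulant generating function of $\mathbf{1}_S$, computing $\nabla^2\Lambda(\vec{0}) = \operatorname{Cov}(X) = D\corMat_\mu$, deriving the diagonal correction $-\frac{1}{\eta}D$ from the $z\mapsto\log z$ chain rule, and passing the Hessian condition through the external-field identity $g_{\lambda\ast\mu}(w)\propto g_\mu(\lambda\circ w)$ via the diagonal substitution $z = \diag(a)w$ — is precisely the standard derivation of this equivalence and matches the cited source.
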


\begin{lemma}[\cite{ALO20}]\label{lem:si-as-local}
Suppose $P = U_{1 \to k} D_{k \to 1}$ is the transition operator for the $1 \leftrightarrow k$ up-down walk for a distribution $\mu$ on $\binom{[n]}{k}$. Then 
\[ \lambda_2(P) = \frac{\lambda_{\max}(\corMat_\mu)}{k}.  \]
\end{lemma}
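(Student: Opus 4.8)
The plan is to compute $P = U_{1\to k}D_{k\to 1}$ entrywise, recognize the correlation matrix $\corMat_\mu$ as a rank-one modification of $kP$, and then read off the two spectra. Write $p_i := \P_{\mu}{i}$ (i.e.\ $\P_{S\sim\mu}{i\in S}$) and $p := (p_1,\dots,p_n)^\top$, and note $\sum_i p_i = k$ since every $S$ in the support of $\mu$ has size $k$. Unwinding the definitions of the up and down operators on singletons,
\[ P(\{i\},\{j\}) \;=\; \sum_{S\supseteq\{i\}} \frac{\mu(S)}{p_i}\cdot\frac{\mathbf 1[j\in S]}{k} \;=\; \frac{1}{k}\cdot\frac{\P_{\mu}{i\in S,\ j\in S}}{p_i}, \]
which is $\tfrac1k\,\P_{\mu}{j\given i}$ when $i\ne j$ and $\tfrac1k$ when $i=j$. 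Comparing with \cref{def:corr} yields the identity
\[ \corMat_\mu \;=\; kP - \mathbf 1\, p^\top . \]
(This uses $\sum_i p_i = k$; as usual we assume $p_i>0$ for every $i$ so that the up-walk and the conditional probabilities are defined, otherwise one restricts to the support.)

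Next I would exploit the structure of $P$ coming from reversibility. By \cref{prop:reversibility-down-up}, $P$ is reversible with respect to its stationary distribution $\nu := \mu D_{k\to 1}$, and a one-line computation gives $\nu(\{i\}) = p_i/k$; in particular $P$ is self-adjoint in $\ell^2(\nu)$, hence diagonalizable with real eigenvalues, the largest being $\lambda_1(P)=1$ with eigenvector $\mathbf 1$. Reversibility reads $\diag(p)\,P = P^\top\diag(p)$, so $P^\top p = P^\top\diag(p)\mathbf 1 = \diag(p)\,P\,\mathbf 1 = \diag(p)\,\mathbf 1 = p$. Therefore the hyperplane $V := \{f : p^\top f = 0\}$ --- which is exactly the $\ell^2(\nu)$-orthogonal complement of $\mathbf 1$ --- is $P$-invariant, and since $p^\top\mathbf 1 = k\ne 0$ we have $\mathbf 1\notin V$, so $\R^n = \operatorname{span}(\mathbf 1)\oplus V$ is a $P$-invariant direct sum on which $P$ has eigenvalues $1$ and $\lambda_2(P)\ge\cdots\ge\lambda_n(P)$, respectively. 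On $V$ the rank-one term of the identity above vanishes, so $\corMat_\mu f = kPf$ for all $f\in V$; moreover $\corMat_\mu\mathbf 1 = k\mathbf 1 - \mathbf 1(p^\top\mathbf 1) = 0$ and $\corMat_\mu^\top p = kP^\top p - p(\mathbf 1^\top p) = kp - kp = 0$, so the same direct sum reduces $\corMat_\mu$ as well. Hence $\operatorname{spec}(\corMat_\mu) = \{0\}\cup\{\,k\lambda_i(P) : 2\le i\le n\,\}$.

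Finally, to conclude $\lambda_{\max}(\corMat_\mu) = k\lambda_2(P)$, rather than merely $\max\{0,\,k\lambda_2(P)\}$, I would appeal to \cref{prop:reversibility-down-up} once more: the $1\leftrightarrow k$ up-down walk has nonnegative eigenvalues, so $\lambda_i(P)\ge 0$ for all $i$; thus $k\lambda_2(P)\ge 0$ dominates $0$ and every $k\lambda_i(P)$ with $i\ge 2$. This gives $\lambda_{\max}(\corMat_\mu) = k\lambda_2(P)$, equivalently $\lambda_2(P) = \lambda_{\max}(\corMat_\mu)/k$.

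I do not anticipate a substantive obstacle --- this is essentially a careful bookkeeping argument. The one point needing attention is that $\mathbf 1$ and $V$ are orthogonal with respect to $\ell^2(\nu)$ (equivalently, with respect to the functional $f\mapsto p^\top f$) and not the standard inner product, and it is this $\nu$-geometry that makes $\R^n = \operatorname{span}(\mathbf 1)\oplus V$ a decomposition reducing $P$ and $\corMat_\mu$ simultaneously. Secondary, routine points are the normalization $\sum_i p_i = k$ underlying the rank-one identity and the standard full-support assumption $p_i>0$.
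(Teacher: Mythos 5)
Your proposal is correct and follows essentially the same route as the paper's proof: both derive the identity $\corMat_\mu = kP - \mathbf{1}p^\top$ (the paper writes $d$ for your $p$), both observe that $\frac{1}{k}\mathbf{1}p^\top$ is the $\nu$-orthogonal projection onto $\mathbf{1}$ so that $\corMat_\mu = k\bigl(P - \tfrac{1}{k}\mathbf{1}p^\top\bigr)$ is self-adjoint in $\ell^2(\nu)$, and both then read off $\lambda_2(P) = \lambda_{\max}(\corMat_\mu)/k$ from the resulting spectral decomposition (the paper via the Rayleigh quotient, you via the explicit invariant-subspace splitting). One small merit of your write-up is that you spell out the appeal to \cref{prop:reversibility-down-up} for nonnegativity of the eigenvalues of $P$, which is what lets one identify $\lambda_2(P)$ with the top eigenvalue of $P - \tfrac1k\mathbf{1}p^\top$ rather than with $\max\{0,\lambda_2(P)\}$; the paper uses the same fact implicitly in its unrestricted supremum.
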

\begin{proof}
We include the proof of this result for completeness. 
From the definitions we see that for the vector $d$ with $d_i= \P{i}$, we have that
\[ \corMat_{\mu} = kP - \vec{1} d^T = k \left(P - \frac{1}{k}\vec{1} d^T \right), \]
i.e. $P = \frac{1}{k} \vec{1} d^T + \frac{1}{k} \corMat_\mu$. Observe that $\frac{1}{k} d$ is the stationary distribution of $P$, so $P$ is self-adjoint \cite{levin2017markov} with respect to the inner product $\langle \cdot, \Pi \cdot \rangle$ where $\Pi := \frac{1}{k} \diag(d)$ and $\langle \cdot, \cdot \rangle$ denotes the Euclidean dot product. 
We see by using the variational characterization of eigenvalues that
\begin{equation*}
\lambda_2(P) = \sup_{v} \frac{\langle v, \Pi (P - \frac{1}{k} \vec{1} d^T) v \rangle}{\langle v, \Pi v \rangle} = \sup_{v} \frac{\langle v, \frac{1}{k} \Pi \corMat_\mu v \rangle}{\langle v, \Pi  v \rangle} = \frac{\lambda_{\max}(\corMat_\mu)}{k}. \qedhere
\end{equation*}
\end{proof}

\begin{definition}[Entropic independence]
    \label{def:entropic-independence}
A probability distribution $\mu$ on $\binom{[n]}{k}$ is said to be $C$-entropically independent, for $C\ge 1$, if for all probability distributions $\nu$ on $\binom{[n]}{k}$,
\[ \DKL{\nu D_{k\to 1} \river \mu D_{k\to 1}}\leq \frac{C}{k}\DKL{\nu \river \mu}.  \]
\end{definition}
This is an exact analogue of spectral independence, replacing variance by entropy.
It is also a generalization of subadditivity of entropy which itself is equivalent to a generalized Brascamp-Lieb inequality, see e.g.\ \cite{anari2021entropic,barthe2011correlation,chen2022optimal,blanca2021mixing} for discussion.
\begin{theorem}[\cite{anari2021entropic}]\label{thm:flc-ei}
A distribution $\mu$ on $\binom{[n]}{k}$ is $(1/C)$-FLC if and only if $\lambda \ast \mu$ is $C$-entropically independent for all external fields $\lambda \in \mathbb{R}^{n}_{>0}$ (in particular, all links of $\mu$ are $C$-entropically independent).  
\end{theorem}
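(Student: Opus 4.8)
The plan is to prove the two implications separately, reducing each to facts already in hand. First observe that both $(1/C)$-fractional log-concavity and the property ``$\lambda\ast\mu$ is $C$-entropically independent for every $\lambda\in\R_{>0}^n$'' are stable under further tilting: we have $\lambda'\ast(\lambda\ast\mu)=(\lambda\lambda')\ast\mu$, and since $g_{\lambda\ast\mu}(z)\propto g_\mu(\lambda_1 z_1,\dots,\lambda_n z_n)$, the function $\log g_{\lambda\ast\mu}(z_1^{1/C},\dots,z_n^{1/C})$ is the composition of $\log g_\mu(z_1^{1/C},\dots,z_n^{1/C})$ with the coordinatewise positive-linear map $z\mapsto(\lambda_1^C z_1,\dots,\lambda_n^C z_n)$, hence concave whenever the latter is. So it suffices to show: (a) if $\mu$ is $(1/C)$-FLC then $\mu$ itself is $C$-entropically independent (then apply this to every tilt $\lambda\ast\mu$); and (b) if $\mu$ is $C$-entropically independent then $\mu$ is $C$-spectrally independent (then apply this to every tilt and invoke \cref{fact:si-flc}). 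The parenthetical claim about links then follows from (a) once one notes that $(1/C)$-FLC passes to links -- the generating polynomial of a link of $\mu$ is a coordinatewise monomial rescaling and limit of $g_\mu$, and concavity of the relevant logarithm survives such operations -- so each link is itself $(1/C)$-FLC, hence $C$-entropically independent.

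For (a), the plan is to collapse the definition of entropic independence into a single scalar inequality about $g_\mu$ using the Gibbs (Donsker--Varadhan) variational principle. Write $d_i:=\mathbb{P}_\mu[i\in S]=\partial_{z_i}g_\mu(\vec{1})$, so that $\sum_i d_i=k$ and $\mu D_{k\to 1}$ places mass $d_i/k$ at $i$. Expressing $\DKL{\nu D_{k\to 1}\river\mu D_{k\to 1}}$ as a supremum over test functions $\psi:[n]\to\R$, pulling each $\psi$ back through $D_{k\to 1}$ to the test function $S\mapsto \frac{1}{C}\sum_{i\in S}\psi_i$ on $\binom{[n]}{k}$, and then performing the remaining supremum over $\nu$ by the variational principle again, the inequality $\DKL{\nu D_{k\to 1}\river\mu D_{k\to 1}}\le\frac{C}{k}\DKL{\nu\river\mu}$ for all $\nu$ is seen to be implied by the single estimate
\[
\frac{C}{k}\log g_\mu(z)\ \le\ \log\!\left(\frac{1}{k}\sum_i d_i z_i^{C}\right)\qquad\text{for all }z\in\R_{>0}^n.
\]
Substituting $w_i=z_i^{C}$ and setting $\Phi(w):=\log g_\mu(w_1^{1/C},\dots,w_n^{1/C})$, this reads $\frac{C}{k}\Phi(w)\le\log\langle d/k,\,w\rangle$ for all $w\in\R_{>0}^n$.

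The remaining step -- deducing this estimate from FLC -- is where I expect the only genuine subtlety to be. By definition, $\mu$ being $(1/C)$-FLC says exactly that $\Phi$ is concave on $\R_{>0}^n$; but concavity alone is not enough, since the tangent-plane bound at $\vec{1}$ only gives $\frac{C}{k}\Phi(w)\le\langle d/k,w\rangle-1$, and $\langle d/k,w\rangle-1\ge\log\langle d/k,w\rangle$ runs the wrong way. One must additionally use that $g_\mu$ is homogeneous of degree $k$, so that $\Phi(tw)=\Phi(w)+\frac{k}{C}\log t$; because of this log-homogeneity the desired estimate is invariant under rescaling $w$, so we may normalize to $\langle d/k,w\rangle=1$, where it becomes $\Phi(w)\le 0=\Phi(\vec{1})$ on the hyperplane $\{\langle d/k,\cdot\rangle=1\}$. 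Since $\Phi(\vec{1})=\log g_\mu(\vec{1})=0$ and $\nabla\Phi(\vec{1})=d/C$ is normal to that hyperplane, concavity of $\Phi$ makes $\vec{1}$ a critical point -- hence a global maximum -- of $\Phi$ restricted to the hyperplane, which is precisely the estimate. This completes (a).

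For (b), the plan is to linearize around $\mu$. Given $f$ on $\binom{[n]}{k}$ with $\mathbb{E}_\mu[f]=0$ and $\epsilon$ small, set $\nu=(1+\epsilon f)\mu$; then $\nu D_{k\to 1}=(1+\epsilon\, U_{1\to k}f)\,\mu D_{k\to 1}$, and a second-order expansion of both divergences -- using that $U_{1\to k}$ and $D_{k\to 1}$ are mutually adjoint for the natural weighted inner products -- gives $\DKL{\nu\river\mu}=\frac{\epsilon^2}{2}\mathbb{E}_\mu[f^2]+O(\epsilon^3)$ and $\DKL{\nu D_{k\to 1}\river\mu D_{k\to 1}}=\frac{\epsilon^2}{2}\langle f,\,D_{k\to 1}U_{1\to k}f\rangle_\mu+O(\epsilon^3)$. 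Dividing by $\epsilon^2/2$ and letting $\epsilon\to 0$, $C$-entropic independence forces $\langle f,\,D_{k\to 1}U_{1\to k}f\rangle_\mu\le\frac{C}{k}\mathbb{E}_\mu[f^2]$ for all mean-zero $f$; that is, the second eigenvalue of the $k\leftrightarrow 1$ down-up walk is at most $C/k$. This walk and the $1\leftrightarrow k$ up-down walk have the same non-trivial spectrum (a standard fact; cf.\ \cref{prop:reversibility-down-up}), and by \cref{lem:si-as-local} the latter's second eigenvalue is $\lambda_{\max}(\corMat_\mu)/k$; hence $\lambda_{\max}(\corMat_\mu)\le C$, i.e.\ $\mu$ is $C$-spectrally independent, as needed.
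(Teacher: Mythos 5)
The paper cites this theorem from \cite{anari2021entropic} without including a proof, so there is no in-paper argument to compare against; what follows is an evaluation of your proposal on its own merits.

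Your argument is correct, and it reproduces the roadmap of the cited source. The reduction via Donsker--Varadhan to the scalar generating-polynomial inequality $\tfrac{C}{k}\log g_\mu(z)\le\log\bigl(\tfrac{1}{k}\sum_i d_i z_i^{C}\bigr)$ is exactly the dual reformulation of entropic independence used in \cite{anari2021entropic}; the choice $F(S)=\tfrac{1}{C}\sum_{i\in S}\psi_i$ as the pulled-back test function and the identity $\E_\mu e^{\frac1C\sum_{i\in S}\psi_i}=g_\mu(e^{\psi/C})$ make the reduction airtight. Your treatment of the step you flag as ``the only genuine subtlety'' is the right one: the naive tangent-plane bound indeed goes the wrong way, and the correct fix is to exploit degree-$k$ homogeneity of $g_\mu$ (log-homogeneity of $\Phi$) to restrict to the affine slice $\langle d/k,\cdot\rangle=1$, where $\vec{1}$ is a critical point of the concave $\Phi$ and hence a global maximum with value $0$. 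The converse via linearization is also correct: $\nu=(1+\epsilon f)\mu$ gives $\nu D_{k\to1}=(1+\epsilon U_{1\to k}f)\mu D_{k\to1}$, the second-order expansions are as you state, and dividing by $\epsilon^2/2$ produces $\lambda_2(D_{k\to1}U_{1\to k})\le C/k$, which transfers to $U_{1\to k}D_{k\to1}$ by the shared nonzero spectrum of $AB$ and $BA$ and then to $\lambda_{\max}(\corMat_\mu)\le C$ via \cref{lem:si-as-local}; applying this to every tilt and invoking \cref{fact:si-flc} closes the iff. The only place I would ask for more care is the parenthetical about links: your remark that FLC passes to links via the tilt-limit and the observation that $\log\bigl(z_i g_{\mu_i}(z_{\sim i}^{\alpha})\bigr)$ is concave iff $\log g_{\mu_i}(z_{\sim i}^{\alpha})$ is concave is fine, but as written it is a sketch; a fully self-contained treatment would either cite the ``FLC is closed under links'' fact from \cite{alimohammadi2021fractionally} directly, or note that the KL-contraction form of entropic independence passes to limits of tilts.
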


\subsection{Functional Inequalities}\label{s:functional-inequalities}
Let $P$ be the transition matrix of an ergodic, reversible Markov chain on a finite set $\Omega$, with (unique) stationary distribution $\mu$. The Dirichlet form of $P$ is defined, for $f,g \colon \Omega \to \R$, by
\[\mathcal{E}_P(f,g) := \frac{1}{2}\sum_{x,y \in \Omega}\mu(x)P(x,y)(f(x) - f(y))(g(x) - g(y)).\]

For later use, we record an equivalent expression for the Dirichlet form of down-up walks. 

\begin{lemma}
\label{lem:dirichlet-down-up}
Let $\mu$ be a distribution on $\Omega := \binom{[n]}{k}$ and let $P$ denote the transition matrix of the $k \leftrightarrow k-1$ down-up walk. Then, for any $f,g : \Omega \to \R$,
\[\mathcal{E}_{P}(f,g) = \E_{S_{k-1} \sim \mu D_{k \to k-1}}{\operatorname{Cov}(f(S), g(S) \mid S_{k-1})}.\]
\end{lemma}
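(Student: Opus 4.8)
The plan is to unwind both sides of the claimed identity into sums over chains of sets and match terms. First I would recall that the $k \leftrightarrow k-1$ down-up walk has transition matrix $P = D_{k \to k-1} U_{k-1 \to k}$, so that for $S, S' \in \binom{[n]}{k}$ we have $P(S,S') = \sum_{T \in \binom{[n]}{k-1}} D_{k\to k-1}(S,T) U_{k-1 \to k}(T,S')$; this is nonzero exactly when $S$ and $S'$ differ in a single element and share a common $(k-1)$-subset $T$, and in that case the step can be described as: pick a uniformly random $T \subseteq S$ of size $k-1$ (equivalently, drop a uniformly random element of $S$), then resample the $k$-th element according to the conditional law of $\mu$ given that $T \subseteq S$. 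Concretely, the pair $(S, T)$ with $T \subseteq S$, $|T| = k-1$ is distributed exactly as: draw $S \sim \mu$, then drop a uniform element; equivalently $T \sim \mu D_{k \to k-1}$, and then $S$ is drawn from the conditional law $\mu(\cdot \mid S \supseteq T)$.

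Next I would write the Dirichlet form directly. Starting from
\[
\mathcal{E}_P(f,g) = \frac{1}{2} \sum_{S, S'} \mu(S) P(S, S') (f(S) - f(S'))(g(S) - g(S')),
\]
I substitute the expression for $P(S,S')$ and reorganize the sum to be indexed by the intermediate set $T$. The $\frac12$ and the symmetry of the summand in $S \leftrightarrow S'$ let me pass to an expression where, conditioned on $T$, both $S$ and $S'$ are drawn independently from the conditional law $\mu(\cdot \mid \cdot \supseteq T)$. This turns the inner double sum over $S, S' \supseteq T$ into $\frac12 \E[(f(S) - f(S'))(g(S) - g(S')) \mid S_{k-1} = T]$ with $S, S'$ i.i.d.\ given $T$. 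The standard identity $\frac12 \E[(f(S)-f(S'))(g(S)-g(S'))] = \operatorname{Cov}(f(S), g(S))$ for i.i.d.\ $S, S'$ then collapses the inner expression to $\operatorname{Cov}(f(S), g(S) \mid S_{k-1})$, and the outer sum over $T$ weighted by the appropriate marginal is exactly $\E_{S_{k-1} \sim \mu D_{k \to k-1}}$. I should double-check that the weights match: the probability of the pair $(S, T)$ under "draw $S \sim \mu$, drop uniform element" is $\mu(S) \cdot \frac{1}{k}$ (there are $k$ elements of $S$ to drop, one of which yields $T$), while $\mu(S) P(S,S') = \mu(S) \sum_T \frac{1}{k} U_{k-1\to k}(T, S')$; reindexing by $T$ and using $\mu(S) \frac{1}{k} \cdot \frac{\mu(S')}{\sum_{S'' \supseteq T}\mu(S'')} = (\mu D_{k\to k-1})(T) \cdot \frac{\mu(S)}{\sum\mu(\cdot\supseteq T)} \cdot \frac{\mu(S')}{\sum \mu(\cdot \supseteq T)}$ confirms the conditional-i.i.d.\ structure.

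I do not expect a genuine obstacle here — this is a bookkeeping lemma — but the one place to be careful is the reindexing step where the double sum over ordered pairs $(S,S')$ with a fixed common $(k-1)$-subset is converted into an expectation over $T$ of a conditional covariance. One has to make sure that the diagonal terms $S = S'$ (which contribute zero to the Dirichlet form but are present in the covariance expansion) are handled consistently, and that every pair $(S,S')$ differing in exactly one coordinate is counted with the correct total weight summed over the unique such $T$ (when $|S \cap S'| = k-1$ there is exactly one valid $T = S \cap S'$, so no overcounting occurs). Once the weights are verified to coincide, the identity $\frac{1}{2}\E[(f-f')(g-g')] = \operatorname{Cov}(f,g)$ for conditionally i.i.d.\ variables finishes the proof.
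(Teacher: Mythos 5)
Your argument is exactly the paper's: both expand $P=D_{k\to k-1}U_{k-1\to k}$, reindex the double sum by the intermediate $(k-1)$-set $T$ using the reversibility identity $\mu(S)D_{k\to k-1}(S,T)=(\mu D_{k\to k-1})(T)\,U_{k-1\to k}(T,S)$, and then apply the standard $\tfrac12\E[(f(S)-f(S'))(g(S)-g(S'))]=\operatorname{Cov}(f,g)$ identity for conditionally i.i.d.\ $S,S'$. Your explicit weight check and the remark about diagonal terms are correct and match the paper's computation step for step.
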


\begin{proof}
For notational convenience, let $\mu_{k-1}:= \mu D_{k \to k-1}$ and $\Omega_{k-1} := \binom{[n]}{k-1}$. We have,
\begin{align*}
    \mathcal{E}_P(f,g) &= \frac{1}{2}\sum_{x,y \in \Omega}\sum_{z \in \Omega_{k-1}}\mu(x)D_{k \to k-1}(x,z)U_{k-1 \to k}(z,y)(f(x)-f(y))(g(x) - g(y))\\
    &= \frac{1}{2}\sum_{z \in \Omega_{k-1}}\sum_{x,y \in \Omega}\mu_{k-1}(z)U_{k-1 \to k}(z,x)U_{k-1 \to k}(z,y)(f(x)-f(y))(g(x) - g(y)) \\
    &= \sum_{z \in \Omega_{k-1}}\mu_{k-1}(z)\frac{\E_{X \sim \mu \mid z, Y \sim \mu \mid z}{(f(X)-f(Y))(g(X)-g(Y))}}{2}\\
    &= \E_{S_{k-1} \sim \mu_{k-1}}{\operatorname{Cov}(f(S), g(S) \mid S_{k-1})}. \qedhere
\end{align*}
\end{proof}

\begin{definition}
The spectral gap or Poincar\'e constant of $P$ is defined to be $\gamma$, where $\gamma$ is the largest value such that for every $f \colon \Omega \to \R$,
\[\gamma \Var_{\mu}{f} \leq \mathcal{E}_P(f,f).\]

The modified log-Sobolev constant of $P$ is defined to be the the largest value $\rho$ such that for every $f \colon \Omega \to \R_{\geq 0}$,
\[\rho \Ent_{\mu}{f} \leq \mathcal{E}_{P}(f,\log f),\]
where $\Ent_{\mu}{f} = \E_{\mu}{f\log f} - \E_\mu{f} \log (\E_\mu{f})$.
\end{definition}
\begin{definition}
Let $P$ be an ergodic Markov chain on a finite state space $\Omega$ and let $\mu$ denote its (unique) stationary distribution.
For any $\varepsilon \in (0,1)$, we define the \emph{$\varepsilon$-total variation mixing time} to be
\[\tau_{\text{mix}}(\varepsilon) = \max\set*{\min\set{t\geq 0 \mid d_{TV}(\1_{x}P^t, \mu) \leq \varepsilon}\given x\in \Omega},\]
where $\1_{x}$ is the point mass supported at $x$ and $d_{TV}$ is the total variation distance \cite{cover2012elements}.
\end{definition}

The following relationships between the $\varepsilon$-(total variation) mixing time of $P$, $\tau_{\text{mix}}(\varepsilon)$, and its Poincar\'e and modified log-Sobolev constants is standard (see, e.g.,~\cite{bobkov2006modified,levin2017markov}):
\begin{align}
\label{eqn:mixing-time}
    (\gamma^{-1} - 1)\log\left(\frac{1}{2\varepsilon}\right)\leq \tau_{\text{mix}}(\varepsilon) &\leq \gamma^{-1}\log\left(\frac{1}{\varepsilon}\cdot \frac{1}{\min_{x\in \Omega}\mu(x)}\right),\\
    \tau_{\text{mix}}(\varepsilon) &\leq \rho^{-1}\left(\log\log\left(\frac{1}{\min_{x\in \Omega}\mu(x)}\right) + \log\left(\frac{1}{2\varepsilon^{2}}\right)\right). \nonumber 
\end{align}

\begin{definition}[Approximate tensorization of entropy]
\label{def:approx-tensorization}
A distribution $\mu$ on $\binom{\Omega}{n}$ satisfies approximate tensorization of entropy with constant $C$ if for all probability distributions $\nu$ on $\binom{\Omega}{n}$, 
\[\DKL{\nu D_{n \to n-1} \river \mu D_{n \to n-1}} \leq (1- 1/(Cn))\DKL{\nu\river\mu}.\]
\end{definition}
This is a generalization of the classical definition of approximate tensorization of entropy (see \cite{marton2015logarithmic,caputo2015approximate}), as we observe in the following remark. Explaining this also requires introducing the important concept of \emph{homogenization} which we use throughout this paper:
\begin{definition}[Homogenization]
Let $\mu$ be a distribution on a product space $\Omega' = \Omega_1' \times \dots \times \Omega_n'$. Then $\mu$ can naturally be viewed as a distribution $\mu^{\hom}$ over $\binom{\Omega}{n}$, where $\Omega = \cup_{i=1}^{n}\Omega_i' \times \{i\}$ by identifying $\sigma \in \Omega'$ with the set $\{(\sigma_1,1),(\sigma_2,2),\ldots,(\sigma_n,n)\}$. Note that under this identification, the Glauber dynamics corresponds to the $n \leftrightarrow n - 1$ down-up walk. 
\end{definition}
\begin{remark}
Let $\mu$ be a distribution on a product space and define its homogenization $\mu^{\hom}$ as above.
In this case, approximate tensorization of entropy with constant $C$ for $\mu^{\hom}$ is equivalent to the assertion that for any positive measurable function $f$,
\[ \Ent_{\mu}{f} \le C \sum_{v = 1}^n \E_{\mu}{\Ent_{v}{f}} \]
where 
\[ \Ent_k{f} := \Ent_{\mu(\sigma_k = \cdot \mid \sigma_{\sim k})}{f} \]
is the entropy functional with respect to the conditional measure of $\sigma_k \in \Omega'_k$ given $\sigma_j \in \Omega'_j$ for all $j \ne k$ and for $\sigma \sim \mu$. 
\end{remark}

It is an immediate consequence of the data processing inequality (see, e.g., \cite{anari2021entropic}) that if $\mu$ satisfies approximate tensorization of entropy with constant $C$, then the $n \leftrightarrow n-1$ down-up walk for $\mu$ has modified log-Sobolev constant at least $1/Cn$.

\section{Universality of Spectral Independence}\label{sec:general-facts}
In this section, we present our key result on the universality of spectral independence, \cref{thm:poincare-to-si}, in the general setting of down-up walks (on pure simplicial complexes). 
\subsection{\texorpdfstring{$k \leftrightarrow k - 1$}{k to k-1} Spectral Gap Implies Spectral Independence}
\begin{theorem}\label{thm:poincare-to-si}
Let $\mu$ be a distribution on $\binom{[n]}{k}$. If the $k \leftrightarrow k - 1$ down-up walk has spectral gap $\frac{1}{Ck}$, then $\mu$ is $C$-spectrally independent.
\end{theorem}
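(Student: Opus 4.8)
The plan is to reduce the claim to a Rayleigh-quotient bound for the $1\leftrightarrow k$ up-down walk, and then prove that bound by applying the hypothesized Poincar\'e inequality of the $k\leftrightarrow k-1$ down-up walk to a suitably lifted test function. Write $P := U_{1\to k}D_{k\to 1}$ for the $1\leftrightarrow k$ up-down walk of $\mu$; by \cref{prop:reversibility-down-up} it is reversible with nonnegative eigenvalues and stationary distribution $\pi := \mu D_{k\to 1}$, and by \cref{lem:si-as-local} we have $\lambda_{\max}(\corMat_\mu) = k\cdot\lambda_2(P)$. So it is enough to show $\lambda_2(P)\le C/k$, and by the variational characterization of $\lambda_2$ (valid since $P$ is self-adjoint with respect to $\langle\cdot,\cdot\rangle_\pi$ and $\vec 1$ is its top eigenvector) this amounts to proving $\langle f,Pf\rangle_\pi\le\frac{C}{k}\langle f,f\rangle_\pi$ for every $f:[n]\to\mathbb R$ with $\E_\pi{f}=0$.

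The next step is to express this in terms of the averaging lift $\bar f := D_{k\to 1}f$, i.e.\ the function on $\binom{[n]}{k}$ given by $\bar f(S)=\frac1k\sum_{i\in S}f(i)$. Unwinding the definitions of the up and down operators and of $\pi$, one checks directly that $\langle f,Pf\rangle_\pi=\E_{\mu}{\bar f^2}$, that $\langle f,f\rangle_\pi=\frac1k\,\E_{\mu}{\sum_{i\in S}f(i)^2}$, and that $\E_\pi{f}=\E_{\mu}{\bar f}$; so under the mean-zero hypothesis $\E_{\mu}{\bar f^2}=\Var_{\mu}{\bar f}$, and the goal becomes $\Var_{\mu}{\bar f}\le\frac{C}{k^2}\,\E_{\mu}{\sum_{i\in S}f(i)^2}$.

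To prove this, apply the Poincar\'e inequality of the $k\leftrightarrow k-1$ down-up walk $P^{\vee}$, which has spectral gap $\frac1{Ck}$, to the function $\bar f$: this gives $\Var_{\mu}{\bar f}\le Ck\cdot\mathcal{E}_{P^{\vee}}(\bar f,\bar f)$. By \cref{lem:dirichlet-down-up}, $\mathcal{E}_{P^{\vee}}(\bar f,\bar f)=\E_{S_{k-1}\sim\mu D_{k\to k-1}}{\operatorname{Var}(\bar f(S)\mid S_{k-1})}$. The key observation is that, conditioned on the $(k-1)$-set $S_{k-1}$, we have $S=S_{k-1}\cup\{j\}$ with only the new element $j$ random, and $\bar f(S)=\frac1k\sum_{i\in S_{k-1}}f(i)+\frac1k f(j)$; hence $\operatorname{Var}(\bar f(S)\mid S_{k-1})=\frac1{k^2}\operatorname{Var}(f(j)\mid S_{k-1})\le\frac1{k^2}\mathbb{E}[f(j)^2\mid S_{k-1}]$. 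Taking expectations, and using that the joint law of $(S_{k-1},j)$ is ``draw $S\sim\mu$, let $j$ be a uniformly random element of $S$'' (immediate from reversibility, or a one-line computation from the definition of $\mu D_{k\to k-1}$), we obtain $\mathcal{E}_{P^{\vee}}(\bar f,\bar f)\le\frac1{k^2}\,\mathbb{E}_{S\sim\mu,\,j\sim\mathrm{Unif}(S)}[f(j)^2]=\frac1{k^3}\,\E_{\mu}{\sum_{i\in S}f(i)^2}$. Combining, $\Var_{\mu}{\bar f}\le Ck\cdot\frac1{k^3}\,\E_{\mu}{\sum_{i\in S}f(i)^2}=\frac{C}{k^2}\,\E_{\mu}{\sum_{i\in S}f(i)^2}$, as desired; therefore $\lambda_2(P)\le C/k$ and $\lambda_{\max}(\corMat_\mu)\le C$.

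I do not expect a genuine obstacle here---as the paper notes, the argument is short once the definitions are in place. The conceptual crux is choosing the right test function: one applies the \emph{global} Poincar\'e inequality not to $f$ but to its averaging lift $\bar f$, for which the down-up Dirichlet form contracts by the extra factor $1/k^2$, precisely because resampling one coordinate of $\bar f$ alters only one of its $k$ averaged terms. The only real work is bookkeeping: translating the Rayleigh quotient of $P$ into $\Var_{\mu}{\bar f}/\langle f,f\rangle_\pi$ via the definitions of the up/down operators and the stationary measure $\mu D_{k\to 1}$ (essentially contained in the proof of \cref{lem:si-as-local}), and identifying in \cref{lem:dirichlet-down-up} the joint distribution of the $(k-1)$-set and the resampled element.
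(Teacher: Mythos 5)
Your proof is correct and is essentially the paper's argument: the paper reduces $C$-spectral independence via \cref{fact:si-flc} to the inequality $\Var_\mu{\sum_{i\in S}v_i}\le C\,\E_\mu{\sum_{i\in S}v_i^2}$ and then applies the down-up Poincar\'e inequality together with \cref{lem:dirichlet-down-up} to the test function $f(S)=\sum_{i\in S}v_i$, which is exactly your $k\bar f$. Your detour through \cref{lem:si-as-local} and the Rayleigh quotient of the $1\leftrightarrow k$ walk re-derives the same variance inequality rather than taking a different route, and the crux (only one coordinate of the averaged function changes under the down-up move) is identical.
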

\begin{proof}

From \cref{fact:si-flc}, $C$-spectral independence is equivalent to the inequality $D \corMat_{\mu} \preceq C D$, where $D$ is the diagonal matrix with entries $D_{ii} = \P_{\mu}{i}$.. This is equivalent to showing that for all vectors $v \in \R^{n}$, 
\[ \Var*_{S \sim \mu}{\sum_{i \in S} v_i} \le C \E*_{S \sim \mu}{\sum_{i \in S} v_i^2}. \]
From the definition of the spectral gap and \cref{lem:dirichlet-down-up}, we have for any function $f : \binom{[n]}{k} \to \mathbb{R}$ that
\[ \Var_{S \sim \mu}{f(S)} \le Ck \cdot \E_{S_{k-1} \sim \mu D_{k \to k - 1}}{\Var{f(S) \mid S_{k - 1}}}. \]
Applying this inequality to the function $f(S) = \sum_{i\in S}v_i$, we observe that
\begin{align*} 
\frac{1}{Ck} \Var*_{S \sim \mu}{\sum_{i \in S} v_i} 
&\le \E*_{S_{k-1} \sim \mu D_{k \to k - 1}}{\Var*{\sum_{i \in S} v_i \mid S_{k - 1}}}  \\
&= \E*_{S_{k-1} \sim \mu D_{k \to k - 1}}{\Var*{\sum_{i \in S \setminus S_{k - 1}} v_i \mid S_{k - 1}}}  \\
&\le \E*_{S_{k-1} \sim \mu D_{k \to k - 1}}{\E*{\sum_{i \in S \setminus S_{k - 1}} v_i^2 \mid S_{k - 1} }} = \frac{1}{k} \E*_{S \sim \mu}{\sum_{i \in S} v_i^2},
\end{align*}
as desired. Here, in the second inequality we have used that $\Var{f(X)} \le \E{f(x)^2}$ and the fact that the sum is over the set $S \setminus S_{k - 1}$ which has size exactly one, and in the last equality we used symmetry. 
\end{proof}
\begin{remark}
It is well-known that a bounded Poincar\'e constant implies that the largest eigenvalue of the covariance matrix is also bounded. In contrast, \cref{thm:poincare-to-si} shows spectral independence, which is a much stronger property. Spectral independence exactly controls the largest eigenvalue of the \emph{influence} matrix rather than the covariance; reinterpreted in terms of the covariance matrix, it asserts a PSD upper bound not just by a multiple of the identity, but by a multiple of the diagonal matrix $D$ of marginals, which is often much smaller. For example, for the uniform distribution on $\binom{[n]}{k}$, spectral independence tells us that the largest eigenvalue of the covariance matrix is $O(k/n)$, rather than just $O(1)$.
\end{remark}


\subsection{Trickle-Down of \texorpdfstring{$k \leftrightarrow k - 1$}{k to k-1} Spectral Gap}
As a consequence of the result established in the previous section, we can prove an analogue of the trickle-down theorem (which bounds spectral gaps of $k \leftrightarrow 1$ walks inductively) for the $k \leftrightarrow k - 1$ walk. This follows by combining \cref{thm:poincare-to-si} with Oppenheim's trickle-down theorem \cite{Opp18} and the ``local-to-global'' argument \cite{AL20,KO18}. 
\begin{theorem} \label{thm:continuity}
Let $\mu$ be a distribution on $\binom{[n]}{k}$ with $k \ge 3$ so that for every $i \in [n]$, the $k - 1 \leftrightarrow k - 2$ down-up walk on the link $\mu_i$ has spectral gap at least $1/C(k - 1)$, and such that the $k \leftrightarrow k - 1$ down-up walk on $\mu$ is ergodic. Then the $k \leftrightarrow k - 1$ down-up walk on $\mu$ has  spectral gap at least $1/C'' k$, where $C'' := C \frac{k - 1 - C}{k - 2C}$.
\end{theorem}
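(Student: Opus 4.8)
The plan is to trickle the spectral-independence information \emph{down} from the size-one links $\mu_i$ to the empty link via Oppenheim's theorem, and then trickle the spectral gap \emph{back up} from the links to $\mu$ via the local-to-global argument; the constant $C''$ simply falls out of composing the two estimates. For the first part, fix $i\in[n]$ and apply \cref{thm:poincare-to-si} to $\mu_i$, a distribution on $\binom{[n]\setminus\{i\}}{k-1}$: since its $k-1\leftrightarrow k-2$ down-up walk has spectral gap at least $1/C(k-1)$, $\mu_i$ is $C$-spectrally independent, so $\lambda_{\max}(\corMat_{\mu_i})\le C$, and by \cref{lem:si-as-local} the $1\leftrightarrow k-1$ up-down walk $P_i$ on $\mu_i$ satisfies $\lambda_2(P_i)=\lambda_{\max}(\corMat_{\mu_i})/(k-1)\le C/(k-1)$. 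Ergodicity of the $k\leftrightarrow k-1$ down-up walk on $\mu$ forces the $1$-skeleton of the support of $\mu$ to be connected, so the (lazy) $1\leftrightarrow k$ up-down walk $P$ on $\mu$ is ergodic and $\lambda_2(P)<1$; hence \cref{thm:oppenheim} applies with $\lambda:=C/(k-1)$ and, after simplification, yields $\lambda_2(P)\le \frac{(k-2)C}{k(k-1-C)}$. Writing $\gamma_0:=1-\lambda_2(P)$ for the spectral gap of the $1\leftrightarrow k$ up-down walk on $\mu$, and using the identity $k(k-1-C)-(k-2)C=(k-1)(k-2C)$, this gives $\gamma_0\ge \frac{(k-1)(k-2C)}{k(k-1-C)}$. (We are implicitly in the regime $C<k/2$, which is exactly what makes $\gamma_0$, and the claimed constant $C''$, positive.)

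For the second part I would invoke the local-to-global inequality \cite{AL20,KO18} in its one-step form: the spectral gap of the $k\leftrightarrow k-1$ down-up walk $Q$ on $\mu$ is at least $\gamma_0$ times the smallest, over $i\in[n]$, of the spectral gaps of the $k-1\leftrightarrow k-2$ down-up walks on the links $\mu_i$. This is a two-line computation. The law of total variance applied to $(i,S)$ with $S\sim\mu$ and $i$ uniform in $S$ gives $\Var_{\mu}{f}=\E_i{\Var_{\mu_i}{f}}+\Var_i{\E_{\mu_i}{f}}$; \cref{lem:si-as-local} controls the second eigenvalue of the $1\leftrightarrow k$ up-down walk, i.e.\ the drop in variance when $f$ is replaced by the level-one function $i\mapsto\E_{\mu_i}{f}$, so $\Var_i{\E_{\mu_i}{f}}\le(1-\gamma_0)\Var_{\mu}{f}$ and hence $\gamma_0\Var_{\mu}{f}\le\E_i{\Var_{\mu_i}{f}}$. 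Next, bound each $\Var_{\mu_i}{f}$ by $C(k-1)\cdot\E_{T\sim\mu_i D_{k-1\to k-2}}{\Var_{\mu_i}{f\mid T}}$ using \cref{lem:dirichlet-down-up} applied to $\mu_i$ together with the hypothesis; and observe that when $i$ is uniform in $S\sim\mu$ and $T\sim\mu_i D_{k-1\to k-2}$ the set $\{i\}\cup T$ is distributed exactly as $\mu D_{k\to k-1}$ and $\Var_{\mu_i}{f\mid T}=\Var{f\mid \{i\}\cup T}$, so that the right-hand side equals $C(k-1)\,\mathcal{E}_Q(f,f)$. Therefore the spectral gap of $Q$ is at least $\gamma_0/(C(k-1))$, and plugging in the bound on $\gamma_0$:
\[ \text{spectral gap of }Q \;\ge\; \frac{(k-1)(k-2C)}{k(k-1-C)}\cdot\frac{1}{C(k-1)} \;=\; \frac{k-2C}{kC(k-1-C)} \;=\; \frac{1}{C''k}, \qquad C''=C\cdot\frac{k-1-C}{k-2C}, \]
as claimed.

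No step is deep; the two places that need care are (i) the combinatorial check in the second part that removing a uniformly random element of $S\sim\mu$ and then a uniformly random element of the remainder produces exactly a $\mu D_{k\to k-1}$-distributed $(k-1)$-set, with the conditional laws matching so that $\Var_{\mu_i}{f\mid T}=\Var{f\mid \{i\}\cup T}$; and (ii) the arithmetic through Oppenheim's theorem, where the $(k-1)$-factors must cancel and the identity $k(k-1-C)-(k-2)C=(k-1)(k-2C)$ must be used so that the final constant is \emph{exactly} $C''=C\frac{k-1-C}{k-2C}$. It is also worth recording the implicit assumption $C<k/2$, under which the conclusion is a nonvacuous (positive) spectral gap bound.
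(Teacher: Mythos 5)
Your proposal is correct and follows essentially the same route as the paper: apply \cref{thm:poincare-to-si} to each link to obtain $C$-spectral independence of the $\mu_i$, push this up to $\mu$ via Oppenheim's trickle-down (\cref{thm:oppenheim}) together with \cref{lem:si-as-local}, and then recover the $k\leftrightarrow k-1$ spectral gap via the law-of-total-variance local-to-global step, with the arithmetic composing to exactly $C''=C\frac{k-1-C}{k-2C}$. The only cosmetic difference is that you work with the spectral gap $\gamma_0=1-\lambda_2(P)$ of the $1\leftrightarrow k$ walk, whereas the paper carries the spectral-independence constant $C'$ through the computation; the two are related by $\gamma_0=1-C'/k$ and the final bound is identical.
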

\begin{proof}
\cref{thm:poincare-to-si} implies that each link $\mu_i$ is $C$-spectrally independent for every $i$. We claim that $\mu$ is $C'$-spectrally independent for $C' := \frac{C(k - 2)/(k - 1)}{1 - C/(k - 1)}$. Indeed, using \cref{lem:si-as-local} and \cref{thm:oppenheim}:
\[ \lambda_{\max}(\corMat_\mu) = k \lambda_2(P) \le k \frac{(1 - 2/k) C/(k - 1)}{1 - C/(k - 1)} = \frac{C(k - 2)/(k - 1)}{1 - C/(k - 1)} = C'. \]

Next, we use spectral independence to perform a step of the local-to-global argument.
By the law of total variance, for any function $f$ 
\begin{align*}
\Var_{\mu}{f}
&=  \Var*_{i \sim \mu D_{k \to 1}}{\E*_{\mu}{f \mid i}} +  \E*_{i \sim \mu D_{k \to 1}}{\Var*_{\mu}{f \mid i}} \\
&=  \Var*_{\mu D_{k \to 1}}{U_{1 \to k} f} +  \E*_{i \sim \mu D_{k \to 1}}{\Var*_{\mu}{f \mid i}}  \\
&= \E{(U_{1 \to k} (f - \E f))^2} +  \E*_{i \sim \mu D_{k \to 1}}{\Var*_{\mu}{f \mid i}}  \\
&= \E{(f - \E f)(D_{k \to 1}U_{1 \to k} (f - \E f))} +  \E*_{i \sim \mu D_{k \to 1}}{\Var*_{\mu}{f \mid i}}  \\
&\le \lambda_2(D_{k \to 1} U_{1 \to k}) \Var*_{\mu}{f} + \E*_{i \sim \mu D_{k \to 1}}{\Var*_{\mu}{f \mid i}} \\
&\le \frac{C'}{k}  \Var*_{\mu}{f} + \E*_{i \sim \mu D_{k \to 1}}{\Var*_{\mu}{f \mid i}},
\end{align*}
where in the last step we used $\lambda_2(D_{k \to 1} U_{1 \to k}) = \lambda_2( U_{1 \to k} D_{k \to 1}) \le C'/k$  via \cref{lem:si-as-local} and spectral independence.
Hence, rearranging and applying the spectral gap bound for the $k - 1 \leftrightarrow k - 2$ walks of the links,  we have
\begin{align*}
\Var_{\mu}{f}
&\le   \frac{1}{1-C'/k} \E*_{i \sim \mu D_{k \to 1}}{\Var*_{\mu}{f \mid i}} \\
&\le \frac{C (k - 1)}{1-C'/k} \E*_{S_{k - 1} \sim \mu D_{k \to k - 1}}{\Var*_{\mu}{f \mid S_{k - 1}}},
\end{align*}
which bounds the inverse spectral gap of the $k \leftrightarrow k - 1$ walk by $1/C'' k$ for
\[ C'' := \frac{C(1 - 1/k)}{1 - C'/k} = \frac{C(1 - 1/k)}{1 - \frac{C(k - 2)/(k - 1)}{k - Ck/(k - 1)}} = \frac{C(k - 1)(1 - C/(k - 1))}{k - Ck/(k - 1) - C(k - 2)/(k - 1)} = \frac{C(k - 1 - C)}{k - 2C}. \qedhere \]
\end{proof}

	\section{Results for Gibbs Measures on the Hypercube}\label{sec:gibbs}
\paragraph{Notation.} In this section, we consider distributions of the form $\mu(\sigma) \propto \exp(H(\sigma))$ where $H : \{\pm 1\}^n \to \mathbb{R}$. 
Following \cite{eldan2018decomposition}, we identify $H$ with its \emph{multilinear extension} $H : \mathbb{R}^n \to \mathbb{R}$ defined by 
\[ H(x) = \sum_{S \subset [n]} \hat H(S) \prod_{i \in S} x_i \]
where $\hat H(S) := \frac{1}{2^n} \sum_{\sigma \in \{\pm 1\}^n} H(\sigma) \prod_{i \in S} \sigma_i$ is the Fourier transform of $H$ viewed as a function on the hypercube \cite{o2014analysis}. This is also known as the \emph{harmonic extension} since the Laplacian of $H$ vanishes, and for $x \in [-1,1]^n$ it admits an equivalent expression
\begin{equation}\label{eqn:harmonic2}
H(x) = \E_{\sigma \sim \otimes_i \operatorname{Ber}_{\pm}(x_i)}{H(\sigma)} 
\end{equation}
where $\operatorname{Ber}_{\pm}(x)$ denotes the distribution of a random variable valued in $\{\pm 1\}$ with mean $x$.
For $\sigma \in \{\pm 1\}^n$, define
\[ B_j(\sigma) := \partial_j H(\sigma) \]
to be the \emph{cavity field} at site $j$, where $\partial_j H$ is the usual partial derivative applied to the multilinear extension of $H$. Because $H$ is multilinear, the cavity field $B_j$ does not depend on $\sigma_j$. 

Note that in our notational convention, $B_j$ refers to the same object as in \cite{pspin} but $\partial_j$ can differ in sign from their definition. More generally, following \cite{pspin} we define versions of $H$ and $B_j$ for reduced versions of the original system which appear when performing induction. The generalization of $H$ is parameterized by disjoint sets $A,B \subset [n]$ and $\sigma_A \in \{\pm 1\}^A$ and given by 
\[ H_{\sigma_A}^{[A,B]}(\sigma_{(A \cup B)^c}) := \sum_{S \subset B^C} \hat H(S) \prod_{i \in S} \sigma_i.  \]
In the other words, this is the multilinear extension of $H$ evaluated at the vector $(\sigma_A, \sigma_{(A \cup B)^C}, 0_B)$. Similarly, we define
\[ B_j^{[A,B]}(\sigma_{(A \cup B)^c}) := \partial_j H_{\sigma_A}^{[A,B]}(\sigma_{(A \cup B)^c}) \]
where on the left hand side, the dependence on $\sigma_A$ is omitted from the notation for convenience. 
\subsection{Results under the Smoothness Condition} \label{subsec:at-general}

In the main result of this section, we prove that smallness of the Hessian of $H$ implies that the Gibbs measure $\mu(x) \propto \exp(H(x))$ is fractionally log-concave and the Glauber dynamics has $\Omega(1/n)$ spectral gap (i.e.\ relaxation time $O(n)$). Afterwards, in \cref{thm:at-general} we prove that by combining our fractional log-concavity estimate with the entropic independence framework \cite{anari2021entropic}, we obtain strong bounds on the mixing time, MLSI constant, and approximate tensorization constant of $\mu$.
\begin{theorem}\label{thm:poincare bound}
There exist absolute constants $A,B > 0$ for which the following holds. 
 Suppose that $\mu$ is a probability measure with full support on the hypercube $\{\pm 1\}^n$, so $\mu(x) \propto \exp(H(x))$ for some function $H : \{\pm 1\}^n \to \mathbb{R}$. Suppose furthermore that 
\[ \beta := \max_{\sigma \in \{\pm 1\}^n} \|\nabla^2 H(\sigma)\|_{\operatorname{op}} \le A. \]
Then we have that:
\begin{enumerate}
    \item The Poincar\'e constant (spectral gap) for the discrete-time Glauber dynamics on $\mu$ is at least $\frac{1}{(1 + B\beta)n}$.
    \item $\mu^{\hom}$ is $\frac{1}{1 + B\beta}$-fractionally log-concave.
\end{enumerate}
\end{theorem}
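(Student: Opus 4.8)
The plan is to prove conclusion~(1) first and then to derive conclusion~(2) from it by a second use of the universality of spectral independence (\cref{thm:poincare-to-si}). The preliminary observation that powers both steps is that the bound $\|\nabla^2 H(\sigma)\|_{\operatorname{op}}\le\beta$ at the vertices automatically extends to $\|\nabla^2 H(x)\|_{\operatorname{op}}\le\beta$ for all $x\in[-1,1]^n$: since $H$ is multilinear, so is every entry of $\nabla^2 H$, hence $\nabla^2 H(x)=\E_{\sigma\sim\otimes_i\operatorname{Ber}_{\pm}(x_i)}{\nabla^2 H(\sigma)}$ entrywise by \cref{eqn:harmonic2}, and the operator norm is convex. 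Consequently the class $\mathcal H_\beta$ of Gibbs measures $\mu\propto\exp(H)$ with $\max_{x\in[-1,1]^n}\|\nabla^2 H(x)\|_{\operatorname{op}}\le\beta$ is closed under (i) conditioning on an arbitrary set of coordinates --- the Hessian of the induced Hamiltonian $H^{[A,B]}_{\sigma_A}$ in its free variables is a principal submatrix of $\nabla^2 H$ evaluated at the point $(\sigma_A,\cdot,0_B)$ --- and (ii) tilting by an external field, which merely adds a linear term to $H$ and so leaves $\nabla^2 H$ unchanged. In particular, every link, at every level, of $\mu^{\hom}$ or of any of its external-field tilts is again the homogenization of a measure in $\mathcal H_\beta$ on fewer spins.

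For conclusion~(1) I would induct on $n$, proving that every $\mu\in\mathcal H_\beta$ has Glauber spectral gap at least $\frac{1}{(1+B\beta)n}$, i.e.\ the $n\leftrightarrow n-1$ down-up walk on $\mu^{\hom}$ has relaxation time at most $(1+B\beta)n$. The base cases ($n\le 2$, say) are direct: the $1\leftrightarrow 0$ walk has gap $1$, and for $n=2$ a hand computation bounds $\lambda_{\max}(\corMat_{\mu^{\hom}})$ by $1+O(\beta)$; this is where the smallness $\beta\le A$ is used. For the inductive step I would run the recursive spectral-gap estimate of \cite{pspin}, which controls the relaxation time of an $n$-spin system in terms of those of its $(n-1)$-spin pinnings (which, by the closure observation, again lie in $\mathcal H_\beta$), \emph{provided} one feeds it a ``continuity'' estimate certifying that the $n$-spin gap is not much worse than the $(n-1)$-spin gap. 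That continuity estimate is exactly \cref{thm:continuity}, which holds for arbitrary simplicial complexes (it follows from \cref{thm:poincare-to-si} together with Oppenheim's trickle-down theorem \cite{Opp18} and the local-to-global argument), so it is available with no model-specific input. The quantitative heart of the recursion is the control of the correction produced by integrating out one spin, which is governed by the fluctuation of the cavity fields $B_j$ and of the induced Hamiltonians $H^{[A,B]}$; smallness of $\nabla^2 H$ is exactly what makes this correction summable over the iteration, so the induction closes at the linear rate $(1+B\beta)n$ rather than the polynomially growing rate of order $n^{1+\Theta(\beta)}$ that a crude single-site variance decomposition would give.

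Conclusion~(2) follows from conclusion~(1). By the closure observation, for every external field $\lambda\in\R^{2n}_{>0}$ the tilt $\lambda\ast\mu^{\hom}$ is the homogenization of some $\mu'\in\mathcal H_\beta$, so conclusion~(1) applied to $\mu'$ shows the $n\leftrightarrow n-1$ down-up walk on $\lambda\ast\mu^{\hom}$ has spectral gap at least $\frac{1}{(1+B\beta)n}$; hence $\lambda\ast\mu^{\hom}$ is $(1+B\beta)$-spectrally independent by \cref{thm:poincare-to-si}. As this holds for all $\lambda$, the ``moreover'' clause of \cref{fact:si-flc} yields that $\mu^{\hom}$ is $\frac{1}{1+B\beta}$-fractionally log-concave.

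The main obstacle is the induction in the second paragraph. Purely spectral-independence-based machinery --- local-to-global and trickle-down, hence \cref{thm:continuity} on its own --- cannot reach relaxation time $O(n)$ here: a homogenized hypercube measure is never better than $1$-spectrally independent, and any genuine interaction pushes its spectral-independence constant strictly above $1$, so such tools yield relaxation time only of order $n^{1+\Theta(\beta)}$. Getting the sharp $O(n)$ rate genuinely requires exploiting the smoothness of $H$ through the cavity-field analysis, with \cref{thm:continuity} serving as the a-priori continuity bound the cavity recursion needs in order to bootstrap to the optimal rate.
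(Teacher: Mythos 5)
Your proposal is correct and follows essentially the same route as the paper: the same closure observation (the paper's \cref{lem:submatrix norm trivial bound}, i.e.\ $T=\beta$ via multilinearity and convexity of the operator norm), the same induction on the number of free spins combining the recursive estimate of \cite{pspin} (\cref{lem:inductive}) with \cref{thm:continuity} as the automatic continuity input, and the same derivation of conclusion~(2) from conclusion~(1) via tilt-invariance of $\nabla^2 H$, \cref{thm:poincare-to-si}, and \cref{fact:si-flc}. The only omission is the explicit bookkeeping that closes the induction (the choice of $\delta=\Theta(\beta)$, $\epsilon=\delta/10$, and the bound $T_{N-k-1}\le 7/2$), which is routine given the structure you describe.
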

This result will be proved by combining the results we established in \cref{sec:general-facts} with an important estimate established in \cite{pspin}, which we now recall. 

We start with some notation.
Let
\[T := \sup_{A, B: A \cap B =\emptyset} \sup_{\sigma_A \in \{\pm 1\}^{|A|}} \sup_{\sigma \in \{\pm 1\}^{N - \abs{A\cup B}}} \norm{\parens{\partial_i B_j^{[A,B]} (\sigma) }_{1\leq i, j \leq N - \abs{A \cup B}}}_{\operatorname{op}} \]
and for any $0 \le k \le N$, let $a_{N - k}$ be the worst-case dimension-free Poincar\'e constant among all subsystems with $|A \cup B| = k$. Precisely, $a_{N - k}$ is defined to be the smallest positive number so that for all $A,B$ disjoint with $|A \cup B| = k$ and all $\sigma_A \in \{\pm 1\}^{A}$, the Glauber dynamics for every measure of the form $\mu^{[A,B]}_{\sigma_A}(\sigma) \propto \exp(H^{[A,B]}_{\sigma_A}(\sigma))$ on $\{\pm 1\}^{(A \cup B)^{c}}$ has spectral gap at least $\frac{1}{a_{N - k}(N - k)}$. 

The following key result from \cite{pspin} relates the values of $a_{N - k}$ between different values of $k$:
\begin{lemma}[Proposition 4.1 of \cite{pspin}] \label{lem:inductive}
There exist absolute constants $C,\beta_0 > 0$ for which the following holds. Let $C_r := (Cr)^2 e^{C r} =\Theta_{C}(r^2)$ for $r\leq 1/C$.
Suppose that $\beta < \beta_0$ and $\epsilon \in (0,10^{-2})$ are such that $T \le 5 \beta$ and $a_{N - k - 1} < \epsilon/C_{\beta}$. Then 
\[ \left(1 - \frac{C \beta^2 e^{C \beta} \max(1,a_{N - k})^2}{\epsilon(N - k)}\right) a_{N - k} \le \left(1 - \frac{1}{N - k}\right) a_{N - k - 1} + \frac{(1 + 4\epsilon)^5}{N - k}. \]
\end{lemma}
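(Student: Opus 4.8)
Write $m := N-k$ and fix an arbitrary $m$-spin subsystem $\mu = \mu^{[A,B]}_{\sigma_A} \propto \exp(H)$; by hypothesis $\|\nabla^2 H\|_{\operatorname{op}} \le T \le 5\beta$. The goal is to lower bound the spectral gap of the $m\leftrightarrow m-1$ down-up (Glauber) walk $P$ on $\mu$. By \cref{lem:dirichlet-down-up} its Dirichlet form is $\mathcal{E}_P(f,f) = \tfrac{1}{m}\sum_{i=1}^{m}\E_\mu[\Var_\mu(f\mid\sigma_{\sim i})]$, so it suffices to bound $\Var_\mu f$ by a multiple of $m\,\mathcal{E}_P(f,f)$. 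For each coordinate $i$ apply the law of total variance $\Var_\mu f = \E_{\sigma_i}[\Var_\mu(f\mid\sigma_i)] + \Var_{\sigma_i}(\E_\mu[f\mid\sigma_i])$ and average over $i$. The conditional term is handled by induction: $\mu(\cdot\mid\sigma_i)$ is again an $m-1$-spin subsystem of the allowed form (the cavity field at $i$ is absorbed into the external field), so by definition of $a_{m-1}$ it has Glauber spectral gap $\ge 1/(a_{m-1}(m-1))$, giving $\Var_\mu(f\mid\sigma_i)\le a_{m-1}\sum_{j\ne i}\E[\Var_\mu(f\mid\sigma_{\sim j})\mid\sigma_i]$ (for $j\ne i$, conditioning on $\sigma_{\sim j}$ fixes $\sigma_i$); averaging over $i$ collapses this to $(1-1/m)\,a_{m-1}\cdot m\,\mathcal{E}_P(f,f)$. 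The marginal term $\tfrac1m\sum_i\Var_{\sigma_i}(\E_\mu[f\mid\sigma_i])$ equals $\langle \bar f, D_{m\to 1}U_{1\to m}\bar f\rangle_\mu$ for $\bar f = f-\E_\mu f$, hence is at most $\tfrac{1}{m}\lambda_{\max}(\corMat_\mu)\,\Var_\mu f$ by \cref{lem:si-as-local}. So everything reduces to a spectral-independence estimate for $\mu$. The \emph{cleanest} way to get it is exactly the argument of \cref{thm:continuity}: \cref{thm:poincare-to-si} applied to the $m-1$-spin links of $\mu$ shows those links are $a_{m-1}$-spectrally independent, and Oppenheim's trickle-down (\cref{thm:oppenheim}) then bounds $\lambda_{\max}(\corMat_\mu)$; substituting and rearranging yields a recursion of the stated shape.

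\textbf{Getting the explicit $\beta$-dependence.} The recursion produced by the route above carries no reference to $\beta$ or $\epsilon$, whereas \cref{lem:inductive} has both; the extra terms come from bounding the spectral independence of $\mu$ \emph{directly} from the Hessian hypothesis (which is what one must do before universality of spectral independence is available), and this is what yields the sharper contraction. One bounds the off-diagonal influences $\corMat_\mu(i,j) = \P_\mu(\sigma_j\mid\sigma_i=+) - \P_\mu(\sigma_j\mid\sigma_i=-)$ entrywise: since $H$ is multilinear, $H(\sigma) = H(\sigma_{\sim i},0) + \sigma_i B_i(\sigma_{\sim i})$, so the two conditional laws $\mu(\cdot\mid\sigma_i=\pm)$ differ only by the multiplicative reweighting $\exp(2\sigma_i B_i(\sigma_{\sim i}))$, and $\corMat_\mu(i,j)$ becomes a covariance of $\mathbf 1[\sigma_j=\cdot]$ against a function whose discrete gradient is governed by $\partial_\ell B_i$, hence by $T\le 5\beta$. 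Closing this quantitatively requires passing through the Poincar\'e inequality of the reweighted $m$-spin system itself, which is where the factor $\max(1,a_m)^2$ enters (one needs the resolvent/gradient bound for precisely the system whose gap is being estimated), and bounding ratios of partition functions / $\cosh$'s and the nonlinearity of $x\mapsto\tanh x$ by factors that are $e^{O(\beta)}$ when $\beta$ is small; the hypotheses $\beta<\beta_0$ and $a_{N-k-1}<\epsilon/C_\beta$ are exactly what make these estimates — and the active-versus-lazy conversions — close up, with all leftover multiplicative error collected into $(1+4\epsilon)^5$. Feeding the resulting bound $\lambda_{\max}(\corMat_\mu) \le 1 + O\!\bigl(\beta^2 e^{O(\beta)}\max(1,a_m)^2/\epsilon\bigr)$ into the averaged total-variance identity, and then rearranging (moving the $\Var_\mu f$ coming from the marginal term to the left), gives $\bigl(1 - \tfrac{C\beta^2 e^{C\beta}\max(1,a_m)^2}{\epsilon m}\bigr)\Var_\mu f \le \bigl((1-1/m)a_{m-1} + (1+4\epsilon)^5/m\bigr)\,m\,\mathcal{E}_P(f,f)$; taking the supremum over $f$ and over all $m$-spin subsystems yields the stated recursion with $m=N-k$.

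\textbf{Main obstacle.} The crux is the spectral-independence estimate of the second paragraph. It is simultaneously the only place the Hessian hypothesis $T\le 5\beta$ gets used and the place where the bound is \emph{self-referential}: the natural cavity-field argument controls $\corMat_\mu$ using a Poincar\'e/resolvent bound for $\mu$ itself, producing the $\max(1,a_m)^2$ term, so one is bounding $a_m$ in terms of $a_m$. This is legitimate only because the resulting coefficient is $O(\beta^2/(\epsilon m))$ and can be absorbed to the left-hand side — but making this rigorous needs care (a bootstrap/continuity argument in $m$, or else the trickle-down detour of \cref{thm:continuity} to break the circularity, at the cost of losing the $\beta$-dependent contraction). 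A secondary, purely bookkeeping obstacle is verifying that every multiplicative error incurred along the way — from the $\operatorname{Ber}_{\pm}$ nonlinearity, from conditioning, from the active/lazy conversions, and from the approximations used to relate the reweighted and unweighted subsystems — genuinely fits inside the advertised $e^{C\beta}$ and $(1+4\epsilon)^5$ budgets, which is precisely the kind of model-specific estimation that the universality-of-spectral-independence viewpoint is designed to circumvent.
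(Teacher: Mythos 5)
This statement is imported verbatim from elsewhere: the paper cites it as Proposition~4.1 of \cite{pspin} and gives no proof of its own, so there is no in-paper argument to compare against. Treating your write-up as a free-standing reconstruction of the cited result, there are two concrete gaps.

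First, the central technical estimate --- the bound $\lambda_{\max}(\corMat_\mu) \le 1 + O\bigl(\beta^2 e^{O(\beta)}\max(1,a_m)^2/\epsilon\bigr)$ that you feed into the variance decomposition --- is never actually derived. You gesture at bounding the entries of $\corMat_\mu$ by a cavity-field reweighting and a ``Poincar\'e/resolvent bound for the system whose gap is being estimated,'' and then explicitly flag this as the crux and the main obstacle. But that estimate \emph{is} the lemma; everything else is bookkeeping. Asserting the desired conclusion and noting that making it rigorous ``needs care'' is not a proof, and the self-referential structure you correctly identify (controlling $a_m$ by something involving $a_m$, then absorbing the coefficient to the left) is precisely what the hypothesis $a_{N-k-1} < \epsilon/C_\beta$ and the factor $(1+4\epsilon)^5$ are engineered to make safe. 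None of that machinery appears in your sketch.

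Second, the final algebraic step does not produce the stated inequality. The averaged law-of-total-variance decomposition gives
\[
\Var_{\mu}{f} \;\le\; (1 - 1/m)\, a_{m-1}\, m\,\mathcal{E}_P(f,f) \;+\; \frac{\lambda_{\max}(\corMat_\mu)}{m}\,\Var_{\mu}{f},
\]
and plugging in $\lambda_{\max}(\corMat_\mu) \le 1 + O(\cdot)$ and rearranging yields
\[
\left(1 - \frac{1}{m} - \frac{O(\cdot)}{m}\right)\Var_{\mu}{f} \;\le\; (1 - 1/m)\, a_{m-1}\, m\,\mathcal{E}_P(f,f),
\]
which is \emph{not} of the form claimed in the lemma: the left-hand side carries a spurious $1/m$, and the right-hand side has no $(1+4\epsilon)^5/m$ additive term. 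That additive constant is load-bearing --- without it, the recursion would drive $a_m$ below $1$, which is impossible --- so the real proof must bound the marginal-variance term $\frac{1}{m}\sum_i \Var{\E_{\mu}{f \mid \sigma_i}}$ by something like $\frac{1 + O(\epsilon)}{m}\Var_{\mu}{f}$ plus a $\beta$-dependent correction, not simply by $\frac{\lambda_{\max}(\corMat_\mu)}{m}\Var_{\mu}{f}$. Your ``clean route'' via universality of spectral independence and trickle-down is essentially the paper's \cref{thm:continuity}, which deliberately sacrifices the $\beta$-dependence; your ``explicit $\beta$-dependence'' route never recovers it. To verify the lemma one must work through the actual argument in \cite{pspin}, which the paper chooses to treat as a black box.
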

The key difficulty in using this relation to inductively bound the spectral gap is the presence of the term $\max(1, a_{N - k})^2$ which, if large, will make the bound trivial. In \cite{pspin}, this was overcome using a ``continuity argument'' (Section 3 there) which uses properties specific to the $p$-spin model (random $H$). It turns out we can eliminate the need for a specialized continuity argument completely using the general results established in \cref{sec:general-facts}. 

We first check that the assumption on $T$ is satisfied. Actually, it ends up to be equivalent to our definition of $\beta$. 
\begin{lemma} \label{lem:submatrix norm trivial bound}
With the notation above, $T = \beta$.
\end{lemma}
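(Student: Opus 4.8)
The statement $T = \beta$ should follow by unwinding the definitions of both quantities. Recall $\beta = \max_{\sigma \in \{\pm 1\}^n} \|\nabla^2 H(\sigma)\|_{\operatorname{op}}$, where $\nabla^2 H$ is the Hessian of the multilinear extension, and $T$ is the supremum over disjoint $A,B$, boundary conditions $\sigma_A$, and points $\sigma$ of the operator norm of the matrix $\bigl(\partial_i B_j^{[A,B]}(\sigma)\bigr)_{i,j}$. Since $B_j^{[A,B]} = \partial_j H_{\sigma_A}^{[A,B]}$, the matrix in question is exactly $\bigl(\partial_i \partial_j H_{\sigma_A}^{[A,B]}(\sigma)\bigr)_{i,j}$, i.e.\ the Hessian of the reduced Hamiltonian $H_{\sigma_A}^{[A,B]}$ restricted to the free coordinates $(A \cup B)^c$.

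\smallskip
\textbf{Step 1: $T \le \beta$.} The key observation is that $H_{\sigma_A}^{[A,B]}$ is, by its definition $H_{\sigma_A}^{[A,B]}(\sigma_{(A\cup B)^c}) = \sum_{S \subset B^c} \hat H(S)\prod_{i \in S}\sigma_i$, nothing but the multilinear extension of $H$ evaluated at the point $(\sigma_A, \sigma_{(A\cup B)^c}, 0_B)$. Because $H$ is multilinear, partial differentiation in the free coordinates commutes with fixing the other coordinates: for $i,j \in (A\cup B)^c$, $\partial_i \partial_j H_{\sigma_A}^{[A,B]}(\sigma) = (\partial_i \partial_j H)(\sigma_A, \sigma_{(A\cup B)^c}, 0_B)$. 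Hence the matrix $\bigl(\partial_i B_j^{[A,B]}(\sigma)\bigr)_{i,j \in (A\cup B)^c}$ is exactly the principal submatrix of the full Hessian $\nabla^2 H$, evaluated at the point $x = (\sigma_A, \sigma_{(A\cup B)^c}, 0_B) \in [-1,1]^n$, indexed by the rows and columns in $(A\cup B)^c$. The operator norm of a principal submatrix is at most that of the full matrix, so $\|(\partial_i B_j^{[A,B]})_{i,j}\|_{\operatorname{op}} \le \|\nabla^2 H(x)\|_{\operatorname{op}}$ for this $x$. It remains to note that $\nabla^2 H(x)$ for $x \in [-1,1]^n$ is a convex combination of Hessians at vertices: writing \eqref{eqn:harmonic2}, $H(x) = \E_{\sigma \sim \otimes_i \operatorname{Ber}_\pm(x_i)}[H(\sigma)]$ and differentiating twice (the off-diagonal entries of $\nabla^2 H$ are multilinear in $x$, the diagonal entries vanish), one gets $\nabla^2 H(x) = \E_{\sigma}[\nabla^2 H(\sigma)]$, hence $\|\nabla^2 H(x)\|_{\operatorname{op}} \le \max_\sigma \|\nabla^2 H(\sigma)\|_{\operatorname{op}} = \beta$ by convexity of the operator norm. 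This gives $T \le \beta$.

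\smallskip
\textbf{Step 2: $T \ge \beta$.} Conversely, take $\sigma^\star \in \{\pm 1\}^n$ achieving $\|\nabla^2 H(\sigma^\star)\|_{\operatorname{op}} = \beta$ and choose $A = B = \emptyset$ in the definition of $T$. Then $H_{\sigma_A}^{[\emptyset,\emptyset]} = H$ on all $n$ coordinates, the matrix $\bigl(\partial_i B_j^{[\emptyset,\emptyset]}(\sigma^\star)\bigr)_{i,j} = \nabla^2 H(\sigma^\star)$, and its operator norm is $\beta$. Since $T$ is a supremum over all valid choices, $T \ge \beta$. Combining with Step 1 yields $T = \beta$.

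\smallskip
\textbf{Main obstacle.} There is no serious obstacle here — this is a definitional unwinding — but the one point requiring a little care is the interchange of differentiation and the restriction/pinning, and the identity $\nabla^2 H(x) = \E_\sigma[\nabla^2 H(\sigma)]$ for $x \in [-1,1]^n$: these rely essentially on multilinearity (so that the second partials $\partial_i\partial_j H$ with $i \ne j$ are themselves multilinear functions of the remaining variables, and the diagonal second partials vanish identically). One should also double-check the indexing convention: the paper notes that $\partial_j$ may differ in sign from the convention in \cite{pspin}, but a global sign flip on a coordinate changes neither the operator norm of the Hessian nor the set of Hessians attained over the hypercube, so the identity $T = \beta$ is unaffected.
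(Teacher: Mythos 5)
Your proof is correct and follows essentially the same route as the paper: identify $\bigl(\partial_i B_j^{[A,B]}(\sigma)\bigr)_{i,j}$ as a principal submatrix of $\nabla^2 H$ evaluated at the point $(\sigma_A,\sigma_{(A\cup B)^c},0_B)\in[-1,1]^n$, then use the convexity identity $\nabla^2 H(x)=\E_{\sigma\sim\otimes_i\operatorname{Ber}_\pm(x_i)}[\nabla^2 H(\sigma)]$ and the triangle inequality to reduce to the hypercube, with the reverse inequality coming from $A=B=\emptyset$. You are slightly more explicit than the paper about the principal-submatrix step and the vanishing of diagonal second partials, but the argument is the same.
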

\begin{proof}
Observe that for any point $x \in [-1,1]^n$, by linearity and \eqref{eqn:harmonic2} we have
\[ \nabla^2 H(x) = \E_{\sigma \sim \otimes_i \operatorname{Ber}_{\pm}(x_i)} {\nabla^2 H(\sigma)}. \]
 So by the triangle inequality $\|\nabla^2 H(x)\|_{\operatorname{op}} \le \max_{\sigma \in \{\pm 1\}^n} \|\nabla^2 H(\sigma)\|_{\operatorname{op}}$, and hence
\[ \sup_{x \in [-1,1]^n} \|\nabla^2 H(x)\|_{\operatorname{op}} = \max_{\sigma \in \{\pm 1\}^n} \|\nabla^2 H(\sigma)\|_{\operatorname{op}}. \]
From the definition, it's clear that $T \ge \beta$ and since $H^{[A,B]}(\sigma_{(A \cup B)^c})$ is the multilinear extension of $H$ evaluated at the vector $(\sigma_A, \sigma_{(A \cup B)^C}, 0_B)$, it follows from the above argument that $T \le \beta$.
\end{proof}



\begin{proof}[Proof of \cref{thm:poincare bound}]
First, we observe that if we establish conclusion (1) concerning the spectral gap, it will automatically imply conclusion (2) regarding fractional log-concavity via Theorem~\ref{thm:poincare-to-si}. Indeed, by \cref{fact:si-flc}, we know that establishing $\mu^{\hom}$ is $\frac{1}{1 + B \beta}$-FLC is equivalent to showing that for all external fields $\lambda$, the tilted measure $\lambda \ast \mu^{\hom}$ is $(1 + B \beta)$-spectrally independent. Note that tilting $\mu^{\hom}$ is equivalent to first changing the degree-one part of $H$, and then homogenizing the resulting measure. Furthermore, changing the degree-one part of $H$ does not change $\nabla^2 H$ and hence the assumption of this theorem is invariant to arbitrary tilts by external fields. 
Since the Glauber dynamics is the $n \leftrightarrow n - 1$ down-up walk on $\mu^{\hom}$, 
it therefore suffices to prove that the spectral gap of the Glauber dynamics is at least $1/(1 + B \beta)n$, which is precisely what conclusion (1) says.

It remains to prove conclusion (1). 
We bound $a_{N-k}$ by induction on $(N-k).$ 
The induction hypothesis is $ a_{N-k} \leq 1 + \delta$ with $\delta \leq 1/2$ to be chosen later. For base cases, we have the bound when $N - k \le 2$ because in this case the measure is an Ising model (on one or two sites), so the desired bound follows from Theorem 1 of \cite{eldan2021spectral}.

Let $k \leq N-3$ and
assume that $a_{N-k-1} \leq 1+  \delta$. We will prove $a_{N-k} \leq 1 + \delta.$

By \cref{thm:continuity},
\[a_{N-k} \leq a_{N-k-1}  \frac{N-k-1-a_{N-k-1}}{N-k-2 a_{N-k-1}} = a_{N-k-1} T_{N-k-1}\]
with $$T_{N-k-1} := \frac{1}{2} + \frac{(N-k)/2-1}{N-k-1-a_{N-k-1}} \leq \frac{1}{2} + \frac{3/2}{2-3/2} = 7/2$$ for $k \leq N-3.$

Let $\epsilon = \delta/10$. Below, we will choose $\delta = \omega_{\beta \to 0}(\beta^2)$. In particular, for $\beta_0$ sufficiently small, we have
$$a_{N-k-1} \leq 1+ \delta \leq \frac{\epsilon}{C_{\beta}}.$$
Hence, by
\cref{lem:inductive} we have
\[\left(1 - \frac{T_{N-k-1}^2 C\beta^2 e^{C\beta } (1+\delta)^2 }{\epsilon(N-k)}\right) a_{N-k} \leq \left(1-\frac{1}{N-k}\right) a_{N-k-1} +\frac{(1+4\epsilon)^5}{N-k}\]
thus
\[a_{N-k} \leq \left(1-\frac{1}{N-k}\right) a_{N-k-1} + a_{N-k} \frac{T_{N-k-1}^2 C\beta^2 e^{C\beta } (1+\delta)^2 }{\epsilon(N-k)}  +\frac{(1+4\epsilon)^5}{N-k}  \]
Since $a_{N-k} \leq a_{N-k-1}T_{N-k-1}$, we can bound the second term by $\frac{T_{N-k-1}^3 C \beta^2 e^{C\beta} (1+\delta)^2}{\epsilon (N-k)} a_{N-k-1} .$ Assuming $ \epsilon \geq 3 \delta^{-1} T_{N-k-1}^3 C\beta^2 e^{C\beta}$ we have
\begin{align*}
    a_{N-k} &\leq \left(1-\frac{1}{N-k} + \frac{\delta}{3(N-k)}\right) a_{N-k-1} + \frac{(1+4\epsilon)^5}{N-k}\\
    &\leq \left(1 + \frac{\delta/3 -1}{N-k}\right) (1+ \delta)  +\frac{(1+4\epsilon)^5}{N-k}\\
    &\leq (1+\delta) + \frac{1}{N-k} \parens*{(\delta/3 -1)(\delta +1) + (1+4\epsilon)^5}.
\end{align*}

Recall that $\epsilon = \delta/10$. Substituting this in the second term, we can verify that for all $\epsilon = [0,0.01]$, $g(\epsilon) = (10\epsilon/3-1)(10\epsilon+1) + (1+4\epsilon)^5 < 0,$ so that the induction holds provided that $\epsilon \geq 3\delta^{-1}T^{3}_{N-k-1}C\beta^2 e^{C\beta}$ (i.e. $10\epsilon^2 > 3T^3_{N-k-1}C\beta^2 e^{C\beta}$) and $\delta = \omega_{\beta \to 0}(\beta^2)$. Since $T_{N-k-1} \leq 7/2$ under the inductive hypothesis, it is readily seen that taking $\epsilon = \Omega_{C}(\beta)$ suffices for the induction to hold.   

Finally, the bound on the Poincare constant is $ a_N \leq 1+\delta = 1 + \Theta_{C}(\beta).$ \qedhere




\end{proof}

\begin{theorem}\label{thm:at-general}
Suppose $\mu$ and $\beta $ satisfy the same assumptions as \cref{thm:poincare bound}. Then, with the notation there, we have:
\begin{enumerate}
    \item $\mu$ (equivalently $\mu^{\hom}$) satisfies approximate tensorization of entropy with constant $C = O(n^{B\beta})$.
    \item The discrete-time Glauber dynamics on $\mu$ satisfy the Modified Log-Sobolev Inequality (MLSI) with constant $\rho = \Omega(1/n^{1 + B\beta})$. 
    \item The discrete-time Glauber dynamics on $\mu$ satisfy 
    \[ \tau_{\text{mix}}(\epsilon) = O(n^{1 + B\beta}(\log\log(1/\min_{\sigma} \mu(\sigma)) + \log(1/\epsilon)). \]
\end{enumerate}
\end{theorem}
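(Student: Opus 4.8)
The plan is to combine the fractional log-concavity estimate already established in \cref{thm:poincare bound} with the entropic-independence machinery of \cite{anari2021entropic}; once approximate tensorization is in hand, conclusions (2) and (3) follow from standard facts recorded in \cref{sec:preliminaries}.

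First I would note that by \cref{thm:poincare bound}(2), $\mu^{\hom}$ is $\frac{1}{1+B\beta}$-fractionally log-concave, and that the same bound holds at every link of $\mu^{\hom}$: a consistent link of $\mu^{\hom}$ is itself the homogenization of a Gibbs measure on a lower-dimensional hypercube whose Hamiltonian is a coordinate restriction of the multilinear extension of $H$ (one of the $H^{[A,B]}_{\sigma_A}$ of \cref{sec:gibbs}), so by \cref{lem:submatrix norm trivial bound} it obeys the same operator-norm bound $\beta$, hence (applying \cref{thm:poincare bound} to it) the same FLC estimate. By \cref{thm:flc-ei}, $\frac{1}{1+B\beta}$-FLC of $\mu^{\hom}$ and all its links is equivalent to $(1+B\beta)$-entropic independence of $\mu^{\hom}$ and all its links, i.e.\ to the one-step contraction $\DKL{\nu_T D_{m\to 1} \river \mu_T D_{m\to 1}} \le \frac{1+B\beta}{m}\DKL{\nu_T \river \mu_T}$ at every link $T$ of co-size $m$.

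Next I would invoke the entropic local-to-global argument of \cite{anari2021entropic}: chaining the chain rule for KL divergence with the above one-step contraction along a downward induction over the co-size shows that $\mu^{\hom}$ satisfies approximate tensorization of entropy (\cref{def:approx-tensorization}) with constant
\[ C = O\left(\prod_{i=1}^{n-1}\left(1 - \frac{B\beta}{i}\right)^{-1}\right). \]
Since $B\beta < 1$ (recall $AB < 1$ and $\beta \le A$), every factor is positive, the $i=1$ factor is an absolute constant, and $\prod_{i=2}^{n-1}(1-B\beta/i)^{-1} = \exp\left(\sum_{i=2}^{n-1}\left(\frac{B\beta}{i} + O\left(\frac{\beta^2}{i^2}\right)\right)\right) = \exp(B\beta \log n + O(1)) = O(n^{B\beta})$, which is conclusion (1) (enlarging the absolute constant $B$ slightly if necessary).

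Finally, conclusions (2) and (3) are immediate. By the data-processing remark following \cref{def:approx-tensorization}, approximate tensorization with constant $C$ forces the Glauber dynamics --- which is exactly the $n \leftrightarrow n-1$ down-up walk on $\mu^{\hom}$ --- to have modified log-Sobolev constant $\rho \ge \frac{1}{Cn} = \Omega(1/n^{1+B\beta})$, which is (2); substituting this into \eqref{eqn:mixing-time} gives $\tau_{\operatorname{mix}}(\epsilon) \le \rho^{-1}\big(\log\log(1/\min_\sigma \mu(\sigma)) + \log(1/(2\epsilon^2))\big) = O\big(n^{1+B\beta}(\log\log(1/\min_\sigma \mu(\sigma)) + \log(1/\epsilon))\big)$, which is (3). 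The only real obstacle here is bookkeeping rather than a new idea: one must make sure the FLC (equivalently, entropic independence) hypothesis is genuinely available at \emph{every} level of the local-to-global recursion --- which is why the inheritance-under-links step matters --- and that the resulting product is of order exactly $n^{B\beta}$ and not larger, which is the elementary estimate above. All the substantive work was done in \cref{thm:poincare bound} and in the cited results \cite{anari2021entropic,alimohammadi2021fractionally}.
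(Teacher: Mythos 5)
Your proof is correct, and it reaches the same $O(n^{B\beta})$ bound, but it takes a genuinely different route from the paper. The paper invokes Theorem~5 of \cite{anari2021entropic} as a black box, which for an $\alpha$-FLC distribution with $\alpha = 1/(1+B\beta)$ only yields a \emph{block} contraction $\DKL{\nu D_{n\to n-k_0} \river \mu^{\hom} D_{n\to n-k_0}} \le (1-\kappa)\DKL{\nu\river\mu^{\hom}}$ with $k_0 = \lceil 1/\alpha\rceil = 2$ and $\kappa^{-1} = O(n^{1+B\beta})$. It must then bridge from block-$k_0$ to single-site tensorization using \cref{cor:at for constantly many free spins}, which gives the $\exp(O(\beta))$-tensorization of any pinning with $O(1)$ free spins. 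Your argument instead chains the one-step contractions $\DKL{\nu_T D_{m\to 1} \river \mu_T D_{m\to 1}} \le \frac{1+B\beta}{m}\DKL{\nu_T\river\mu_T}$ directly from level $n$ down to level $2$ via the chain rule, obtaining the $n\to n-1$ contraction in one shot with $\kappa = \prod_{m=2}^n\bigl(1-\frac{1+B\beta}{m}\bigr)$; the corresponding tensorization constant $C = 1/(n\kappa)$ then simplifies exactly to your product $\prod_{j=1}^{n-1}(1 - B\beta/j)^{-1}$, which is $\Theta(n^{B\beta})$. The reason this more direct chain closes here is precisely that $1/\alpha = 1+B\beta < 2$, so every factor $1 - (1+B\beta)/m$ for $m\ge 2$ is strictly positive, and no separate bridge lemma is needed. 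This is a cleaner, more self-contained route, though two small caveats are worth flagging: first, Theorem~5 of \cite{anari2021entropic} is stated in the block form, so the direct chain rule argument should be carried out rather than cited; second, your paragraph on ``inheritance-under-links'' is correct but unnecessary --- by \cref{thm:flc-ei}, $\frac{1}{1+B\beta}$-FLC of $\mu^{\hom}$ \emph{already} implies $(1+B\beta)$-entropic independence of $\mu^{\hom}$ and all its links (and all tilts), so no appeal to sub-Hamiltonians via \cref{lem:submatrix norm trivial bound} is required.
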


The proof of this theorem requires the following intermediate lemma showing approximate entropy tensorization for a system satisfying the conditions of \cref{thm:at-general} under pinning with $N-k= O(1)$ free (unpinned) spins. \cite[Lemma 2.2]{caputo14} showed this result only for 2-spin systems, but the proof applies more generally. We repeat the proof for the sake of completeness. The details are in \cref{app:deferred-proofs}. 

\begin{lemma}\label{cor:at for constantly many free spins}
Let $\mu$ be a distribution satisfying the assumptions of \cref{thm:at-general}.
Any pinning of $\mu$ where the number of free spins $N-k$ is constant ($N-k=O(1)$) has approximate entropy tensorization with constant $ C= \exp(O(\beta)).$
\end{lemma}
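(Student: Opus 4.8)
The plan is to run the argument of \cite[Lemma 2.2]{caputo14} --- a Holley--Stroock-type comparison with a product measure --- for a general (but $O(1)$-sized) system. Fix a pinning and write the resulting measure as $\mu' \propto \exp(H')$ on $\{\pm 1\}^{m}$ with $m := N - k = O(1)$; here $H'$ is a relabelling of one of the reduced Hamiltonians $H^{[A,B]}_{\sigma_A}$, and as in the proof of \cref{lem:submatrix norm trivial bound} we have $\|\nabla^2 H'(x)\|_{\operatorname{op}} \le \beta$ for every $x \in [-1,1]^m$. Split $H' = L + R$ into its degree-$\le 1$ part $L$ and the rest $R$; then $R(0) = 0$, $\nabla R(0) = 0$, and $\nabla^2 R = \nabla^2 H'$, so Taylor's formula with integral remainder gives
\[ |R(x)| \le \frac{1}{2}\|x\|^2 \sup_{t \in [0,1]}\|\nabla^2 H'(tx)\|_{\operatorname{op}} \le \frac{m\beta}{2} \qquad \text{for all } x \in \{\pm 1\}^m. \]
Hence the product measure $\pi \propto \exp(L)$ has $\operatorname{osc}(\log(d\mu'/d\pi)) = \operatorname{osc}(R) \le m\beta =: b = O(\beta)$, and since $d\mu'/d\pi$ is a normalized density this gives $e^{-b} \le d\mu'/d\pi \le e^{b}$ pointwise; the same two-sided bound is inherited by every single-site conditional ratio $\mu'(\cdot\mid\sigma_{\sim v})/\pi(\cdot\mid\sigma_{\sim v})$ on $\{\pm 1\}$ and by every marginal ratio $\mu'_{\sim v}/\pi_{\sim v}$, since conditioning and marginalizing cannot increase the oscillation of a log-likelihood ratio.

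Next I would push these bounds through the entropy inequality. Using the variational identity $\Ent_{\nu}{f} = \inf_{t > 0}\E_{\nu}{f\log(f/t) - f + t}$, whose integrand is nonnegative, any pointwise bound $d\nu_1/d\nu_2 \le c$ yields $\Ent_{\nu_1}{f} \le c\,\Ent_{\nu_2}{f}$, and likewise $\E_{\nu_1}{g} \le c\,\E_{\nu_2}{g}$ for $g \ge 0$. Combining this with subadditivity of entropy for the product measure $\pi$ (approximate tensorization with constant $1$):
\[ \Ent_{\mu'}{f} \le e^{b}\,\Ent_{\pi}{f} \le e^{b}\sum_{v = 1}^{m}\E_{\pi}{\Ent_{\pi(\cdot\mid\sigma_{\sim v})}{f}} \le e^{3b}\sum_{v = 1}^{m}\E_{\mu'}{\Ent_{\mu'(\cdot\mid\sigma_{\sim v})}{f}}, \]
where in the last step one factor of $e^{b}$ comes from replacing $\pi(\cdot\mid\sigma_{\sim v})$ by $\mu'(\cdot\mid\sigma_{\sim v})$ inside the entropy and one from replacing the outer average over $\pi_{\sim v}$ by that over $\mu'_{\sim v}$. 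By the remark following \cref{def:approx-tensorization}, this is exactly approximate tensorization of entropy for $\mu'$ with constant $C = e^{3b} = e^{3m\beta} = \exp(O(\beta))$, using $m = N - k = O(1)$.

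The step I expect to be the main obstacle is the bookkeeping in the final display. Unlike the Poincar\'e (variance) functional, which perturbs in one clean step, the entropy functional requires separately tracking the comparison of the single-site conditional entropies $\Ent_{\pi(\cdot\mid\sigma_{\sim v})}$ versus $\Ent_{\mu'(\cdot\mid\sigma_{\sim v})}$ and the comparison of the outer averages over the remaining $m - 1$ coordinates, and checking that each auxiliary density ratio appearing there really does inherit the oscillation bound $b$. The remaining ingredients --- the Taylor estimate for $\operatorname{osc}(R)$, the variational formula for entropy, and subadditivity for product measures --- are standard.
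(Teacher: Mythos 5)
Your proposal is correct and takes the same route as the paper: reduce $\mu'$ to a comparison with the product measure defined by the degree-$\le 1$ part of the reduced Hamiltonian, bound the remainder $R$ (the paper's $W = H_{A,\ge 2}$) uniformly using $\|\nabla^2 H\|_{\operatorname{op}} \le \beta$ and $m = N-k = O(1)$, and transfer approximate tensorization from the product measure via a Holley--Stroock-type density comparison (the paper packages this last step as a standalone comparison lemma with constant $\exp(6\|W\|_\infty)$, which it proves from the same Donsker--Varadhan variational formula you invoke). The chain you flag as the ``main obstacle'' --- propagating the two-sided density bound to single-site conditionals and to the outer marginal --- is exactly the content of that comparison lemma, and your version checks out (the conditional log-ratio at site $v$ equals $g(\sigma) - \log\E_\pi[e^g\mid\sigma_{\sim v}]$, which has magnitude at most $\operatorname{osc}(g)$, so the oscillation bound is indeed inherited, giving the slightly tighter $e^{3b}$ with $b = \operatorname{osc}(R)$ versus the paper's $e^{6\|W\|_\infty}$; both are $\exp(O(\beta))$). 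The one place where your argument is more careful than the paper's write-up: you bound $|R(\sigma)| \le \tfrac{1}{2}\|\sigma\|^2 \sup_t \|\nabla^2 H(t\sigma)\|_{\operatorname{op}}$ via Taylor's formula with integral remainder, whereas the paper asserts the identity $H_{A,\ge 2}(\sigma) = \sigma^\intercal \nabla^2 H_{A,\ge 2}\,\sigma$, which holds only for purely quadratic $H$ (for a degree-$p$ homogeneous part one picks up a factor $p(p-1)$). Your Taylor route gives the stated $O(\beta)$ bound correctly for Hamiltonians of any degree and is the right way to phrase this step.
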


\begin{proof}[Proof of \cref{thm:at-general}]
The second and third conclusion follow directly from approximate tensorization of entropy (see \cref{sec:preliminaries}). 

Let $\alpha = 1/(1+B\beta)$. By Theorem 5 of \cite{anari2021entropic}, we have that if the measure $\mu^{\hom}$ is $\alpha$-FLC 
then for any measure $\nu$ absolute continuous with respect to $\mu$ and $k_0 = \ceil{1/\alpha}$,
\[ \DKL{\nu D_{n \to (n - k_0)} \river \mu^{\hom} D_{n \to (n - k_0)}} \le (1 - \kappa) \DKL{\nu \river \mu^{\hom}} \]
where
\[ \kappa = \frac{(3 - 1/\alpha)^{1/\alpha - \lceil 1/\alpha \rceil} \prod_{i = 0}^{\lceil 1/\alpha \rceil - 1} (2 - i)}{(n + 1)^{1/\alpha}}\geq n^{1/\alpha-1} = n^{B \beta}  \]
This is equivalent to block approximate entropy tensorization with block size $k_0$ \cite[Eq.(1.5)]{blanca2021mixing} i.e.
\[\frac{k_0}{n}\Ent_{\mu}{f} \le n^{B\beta} \frac{1}{\binom{n}{k_0}}\sum_{S\in \binom{[n]}{k_0}} \E_{\mu}{\Ent_{S}{f}}.\]
Since $|S| = k_0 = O(1+B\beta) = O(1)$ and $\beta = O(1)$, combining this with
 \cref{cor:at for constantly many free spins} gives
\[\frac{1}{n}\Ent_{\mu}{f} \le n^{B\beta} \frac{\exp(O(\beta) )}{k_0\binom{n}{k_0}}\sum_{S\in \binom{[n]}{k_0}} \sum_{v\in S} \E_{\mu}{\Ent_{v}{f}} = n^{B\beta} \frac{\exp(O(\beta) )}{n}\sum_{v\in [n]} \E_{\mu}{\Ent_{v}{f}},\]
as required.
\end{proof}

\subsection{Results for the \texorpdfstring{$p$-Spin}{p-Spin} Model}
In this subsection, we prove \cref{thm:mix p spin main}. 
As mentioned in the introduction, we will need to rely on an average case local to global argument. Specifically, we will need the following theorem, which is a slight modification of {\cite[Theorem 20]{avg-local-global}} and follows from the same proof.
\begin{theorem}\label{thm:avg local to global entropy contraction}
Consider a distribution $\mu: \binom{[n]}{k} \to \R_{\geq 0}.$
	Suppose that for every set $T$ of size $\leq k-2$,
 $\mu_T$ is $(k-\card{T})(1-\rho(T))$-entropically independent i.e. 
	\[
		\DKL{\nu D_{k-\card{T}\to 1} \river \mu_T D_{k-\card{T}\to 1}}\leq (1-\rho(T))\DKL{\nu\river \mu_T}.
	\]

 Suppose also that there exist constants $k_0\geq 2$ and $C(k_0)$ such that for all $T$ with $\abs{T} \geq k-k_0$, $\mu_T$ satisfies approximate entropy tensorization with constant $C(k_0)$. 
 
 Finally, for a set $T$ of size $\geq k-1$, define the harmonic mean
		\[ \gamma_T:=\E*_{e_1,\dots,e_{\card{T}}~\text{uniformly random permutation of }T}{\parens*{\rho(\emptyset)\rho(\set{e_1})\rho(\set{e_1,e_2})\cdots \rho(\set{e_1,\dots,e_{k-k_0}})}^{-1}}^{-1}. \]

	Then the operator $D_{k\to (k-1)}$ satisfies
	\[ \DKL{\nu D_{k\to(k-1)} \river \mu D_{k\to(k-1)}} \leq (1-\kappa) \DKL{\nu \river \mu}, \]
	with
	\[ \kappa:=C(k_0)^{-1}\min\set*{\gamma_T\given T\in \binom{[n]}{k-1}}. \]
\end{theorem}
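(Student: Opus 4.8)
The plan is to mimic the proof of Theorem 20 of \cite{avg-local-global}, which is an \emph{average-case local-to-global} statement, adapting it to the slightly modified hypotheses here (entropic independence of links up to codimension $2$, approximate tensorization for the bottom $k_0$ levels, and the harmonic-mean quantity $\gamma_T$ running over permutations of a codimension-$\le 1$ set $T$). The underlying strategy is a telescoping descent of the down operator $D_{k\to 1}$ through intermediate levels, recorded as a recursion on the entropy-contraction factor along a \emph{random} order of removal of elements, rather than the worst-case order. First I would set up the following: for a set $S\sim\mu$ of size $k$ and a uniformly random ordering $e_1,\dots,e_k$ of the elements (thought of as a random maximal chain through the complex conditioned to pass through $S$), unwrap $D_{k\to 1}$ as a composition of single-step down operators $D_{j\to j-1}$, and use the one-level entropic independence hypothesis $\DKL{\nu_T D_{k-|T|\to 1}\river \mu_T D_{k-|T|\to 1}}\le (1-\rho(T))\DKL{\nu_T\river\mu_T}$ at each link $T=\{e_1,\dots,e_{|T|}\}$ to convert the one-step entropy drop into a multiplicative factor $\rho(T)$ of the conditional entropy.

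The core computation is the standard local-to-global entropy-factorization identity: $\DKL{\nu\river\mu}=\DKL{\nu D_{k\to 1}\river \mu D_{k\to 1}}+\E_{i}\DKL{\nu_i\river\mu_i}$ (the chain rule / tensorization of KL along the down-up decomposition), applied recursively. Iterating this down to level $k_0$ and then invoking the approximate-tensorization hypothesis with constant $C(k_0)$ at the bottom, one gets that the total ``one-step'' entropy drop of $D_{k\to k-1}$ is lower-bounded by $C(k_0)^{-1}$ times a weighted combination of the $\rho(T)$ along the random chain; taking the expectation over the random ordering and recognizing the reciprocal-of-expectation-of-reciprocal-of-products structure yields exactly $\gamma_T$ after conditioning on the first removed element (which is why $T$ ranges over $\binom{[n]}{k-1}$ and the product inside $\gamma_T$ starts at $\rho(\emptyset)$ and ends at $\rho(\{e_1,\dots,e_{k-k_0}\})$). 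Minimizing over $T$ gives the stated $\kappa$. Concretely I would: (i) prove the single-level recursion relating the contraction factor of $D_{k\to k-1}$ to that of $D_{k-1\to k-2}$ on links plus the $\rho(\emptyset)$ factor coming from one-step entropic independence; (ii) iterate it $k-k_0$ times; (iii) close the recursion at the bottom using $C(k_0)$; (iv) average over the random permutation, pulling the minimum over the first coordinate out to get $\min_T$; (v) identify the resulting quantity with $C(k_0)^{-1}\gamma_T$ and conclude.

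The step I expect to be the main obstacle is the averaging in (iv)--(v): the naive telescoping produces, for each fixed ordering, a drop proportional to a \emph{harmonic-type} combination $\big(\sum_{j}\tfrac{1}{\rho(\emptyset)\cdots\rho(T_j)}\big)^{-1}$-style expression (or more precisely the product $\rho(\emptyset)\rho(T_1)\cdots$ appears in the denominator of the surviving entropy), and one must take the expectation over the random ordering \emph{outside} a reciprocal, which is where Jensen / the harmonic-mean definition of $\gamma_T$ is used and where one has to be careful about the direction of the inequality. Conditioning on $e_1$ so that the relevant link is a fixed $T\in\binom{[n]}{k-1}$, and then bounding the conditional expectation by $\gamma_T^{-1}$, requires matching the definition of $\gamma_T$ exactly (the permutation there is a permutation of the $|T|=k-1$ elements of $T$, and the product truncates at depth $k-k_0$). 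Since this is precisely the bookkeeping carried out in \cite{avg-local-global}, and the only genuine change is weakening ``spectral/entropic independence of all links'' to ``entropic independence of links of codimension $\ge 2$ plus tensorization at codimension $<k_0$'', which only affects where the recursion is terminated, the adaptation should go through with the same proof; I would simply re-run their argument tracking that the termination now happens at level $k_0$ and incurs the factor $C(k_0)^{-1}$ rather than a universal constant.
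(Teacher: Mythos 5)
Your proposal takes the same approach as the paper, which in fact gives no self-contained argument for this statement and merely notes that it ``is a slight modification of \cite[Theorem 20]{avg-local-global} and follows from the same proof.'' Your description of exactly where the modification enters---truncating the telescoping descent at codimension $k_0$ and closing the recursion with the tensorization constant $C(k_0)^{-1}$, together with the harmonic-mean averaging over random chains that produces $\gamma_T$---matches the adaptation the paper has in mind.
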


{\bf Notation. }For each $k$, let $A_k$ be a uniformly random set of $k$ spins\footnote{Note that for this subsection, $k$ is the number of free (unpinned) spins, unlike in \cref{subsec:at-general}.}. We will bound the norm of the tensor associated to $A_k$ when $A_k^c$ is fixed according to $\sigma$. It will be more convenient to work with the normalization $\sigma \in \{\pm 1\}^{N}/\sqrt{N}$. For a sequence $S$ of coordinates and a spin configuration $\sigma$ on a set of coordinates containing $S$, we denote $\sigma_S = \prod_{i\in S}\sigma_i$. We write $|S|$ for the length of $S$, and write $S\subset A$ if all entries of $S$ are in $A$. For sequences $S,T$ of coordinates, we denote by $g_{(S,T)}$ the entry of the disorder indexed by $(S,T)$; and $g_{\{S,T\}}$ the sum of all entries whose index is the union of $S$ and $T$ (with the same order of elements within $S$ and $T$). For a given realisation of $A_k$, disjoint subsets $A',B'$ of $A_k$ and $\sigma' \in \{\pm 1\}^{A_k \setminus B'}/\sqrt{N}$, we define the matrix $\Delta^{[A',B']}$, with rows and columns indexed by $A_k \setminus (A' \cup B')$ (here, we are suppressing dependence on $\sigma, \sigma', A_k$): 
\begin{equation}
\label{eqn:Delta-matrix}
\Delta^{[A',B']}_{i,j} = \sum_{p\geq 2}\frac{1}{\sqrt{N}} \beta_{p} \sum_{\substack{s+s'=p-2}} \sum_{\substack{S\subset A_k^c,|S|=s,\\ S'\subset A_k\setminus B', |S'|=s'}}g_{\{i,j,S,S'\}}\sigma_{S}\sigma'_{S'},
\end{equation}
i.e.~$\partial_{i} B_{j}^{[A',B']}(\sigma') = \Delta^{[A',B']}_{i,j}$.




The key additional ingredient we need to prove \cref{thm:mix p spin main} using \cref{thm:avg local to global entropy contraction} is the following estimate. 

\begin{proposition} \label{lemma:p spin main}
There exists a constant $C > 0$ such that, with notation as in the proof of \cref{thm:mix p spin main}, with probability at least $1-\exp(-\Theta(N))$ over the choice of the Hamiltonian $H=H(g)$,
\begin{align}
    &\sup_{I\in [N], \sigma \in \{\pm 1\}^{[N]\setminus \{I\}}}\mathbb{E}_{\pi \in \operatorname{Sym}(N-\{I\})}\left[\exp\left(\frac{B \sum_{N\geq k>k_0}\sup_{A',B'\subset A_k}\sup_{\sigma'\in \{\pm 1\}^{A_k\setminus B'}}\|\Delta^{[A',B']}\|_{\operatorname{op}}}{k}\right)\right] \leq C.
\end{align}
Here $A_k$ is the union of $\{I\}$ and the last $k-1$ elements according to the permutation $\pi$ on $[N]\setminus \{I\}$.
\end{proposition}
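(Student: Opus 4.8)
The plan is to fix the Gaussian disorder $g$ on a favorable event and then prove the bound separately for each pair $(I,\sigma)$, finally taking a union bound over the $N\cdot 2^{N-1}$ choices of $(I,\sigma)$; since this union bound costs a factor $e^{O(N)}$, every favorable $g$-event we invoke must have failure probability $e^{-cN}$ with $c$ larger than an absolute constant ($c>\log 2$). Write $X_k:=\sup_{A',B'\subset A_k}\sup_{\sigma'\in\{\pm1\}^{A_k\setminus B'}}\|\Delta^{[A',B']}\|_{\operatorname{op}}$, so the quantity to control is $\mathbb{E}_{\pi}[\exp(B\sum_{N\geq k>k_0}X_k/k)]$. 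The first step is to split $\Delta^{[A',B']}$ according to the degree $p$ of the contributing term of $H$: the $p=2$ part is $\frac{\beta_2}{\sqrt N}(g+g^{T})$ restricted to the free rows/columns $A_k\setminus(A'\cup B')$ — a principal submatrix of a fixed Wigner-type matrix, with no dependence on $\sigma$ or $\sigma'$ — while the $p\geq 3$ part depends on the pinning through $\sigma_S$ ($S\subset A_k^{c}$) and on $\sigma'$. Standard sub-Gaussian matrix concentration ($\varepsilon$-nets over $\sigma',A',B'$ together with Gaussian concentration of the operator norm) shows that $\sup\|\Delta^{(\geq 3)}\|_{\operatorname{op}}$ is at most a small absolute constant $c_0=O(\beta_0)$ for all pinnings simultaneously with probability $1-e^{-\Theta(N)}$; this is essentially the event $\Omega$ of \cite{pspin} (the $2^{p}$-weighted normalization $\beta$ enters here because of the supremum over the $O(2^{p})$ ways of writing $p-2=s+s'$ and over the subsets $S,S'$ and signs $\sigma'$). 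Combined with $\|(g+g^{T})/\sqrt N\|_{\operatorname{op}}=O(1)$, this gives the crude bound $X_k\leq c_0$ for all $k$ on a good event $\mathcal{G}_1$ with $\mathbb{P}[\mathcal{G}_1^{c}]\leq e^{-\Theta(N)}$.

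The heart of the matter is a sharper, \emph{average-case} estimate: I would show there is a fixed $\alpha\in(0,1/2)$ and an event $\mathcal{G}_2$ with $\mathbb{P}[\mathcal{G}_2^{c}]\leq e^{-\Theta(N)}$ such that, on $\mathcal{G}_2$, for every $I$, every $\sigma$, and every $k$,
\[ \mathbb{P}_{\pi}\!\left[\,X_k>(k/N)^{1/2-\alpha}\,\right]\ \leq\ (k/N)^{\alpha}. \]
For the $p\geq 3$ part this rests on concentration for norms of random $k\times k$ sub-Gaussian matrices — the typical induced norm after pinning $N-k$ spins is $(k/N)^{\Omega(1)}$ — together with the observation that a fixed pinning $\sigma$ can inflate an induced entry to $\Theta(1)$ only by being ``resonant'' with one of a bounded number of coordinate pairs (e.g.\ $\sigma_v=\operatorname{sign}(g_{\{I,j,v\}})$ makes $\frac{\beta_3}{N}\sum_{v\in A_k^{c}}g_{\{I,j,v\}}\sigma_v=\Theta(1)$, but, by independence, this same $\sigma$ produces cancellation for other $j'$), and those coordinates land in $A_k$ only with $\pi$-probability $O(k/N)$. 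For the $p=2$ part it follows from norm concentration for principal submatrices of a Wigner matrix (typical norm $\asymp 2\beta_2\sqrt{k/N}<1$). \textbf{This is the step I expect to be the main obstacle:} the favorable $g$-event must fail with probability only $e^{-\Theta(N)}$, so that it survives the union bound over all $2^{N-1}$ configurations $\sigma$, whereas off-the-shelf random-matrix estimates for $k\times k$ submatrices give only failure probability $e^{-\Theta(k)}$. Closing this gap needs a dedicated argument — a high-moment computation at scale $N$, or a direct combinatorial enumeration of which entries of $g$ a single pinning can amplify and by how much — rather than a black-box appeal to existing results.

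Granting $\mathcal{G}_1\cap\mathcal{G}_2$, the remaining work is combinatorial. Put $E_k:=\{X_k>(k/N)^{1/2-\alpha}\}$, so $\mathbb{P}_{\pi}[E_k]\leq p_k:=(k/N)^{\alpha}$ and $X_k\leq (k/N)^{1/2-\alpha}+c_0\mathbf{1}_{E_k}$. The ``typical'' part contributes a bounded constant deterministically:
\[ \sum_{N\geq k>k_0}\frac{(k/N)^{1/2-\alpha}}{k}\ =\ N^{-(1/2-\alpha)}\sum_{k>k_0}k^{-(1/2+\alpha)}\ =\ O(1), \]
since $1/2<1/2+\alpha<1$. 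Hence $\exp(B\sum_{k>k_0}X_k/k)\leq e^{O(B)}\exp(Bc_0\sum_{k>k_0}\mathbf{1}_{E_k}/k)$, and it suffices to bound $\mathbb{E}_{\pi}[\exp(Bc_0\sum_{k>k_0}\mathbf{1}_{E_k}/k)]$ by an absolute constant. Were the $E_k$ independent this would be routine: $\prod_k\big(1+p_k(e^{Bc_0/k}-1)\big)\leq\exp\big(O(Bc_0)\sum_k p_k/k\big)=\exp\big(O(Bc_0/\alpha)\big)=O(1)$, using $\sum_k p_k/k=N^{-\alpha}\sum_k k^{\alpha-1}=O(1/\alpha)$.

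The genuine difficulty — the other delicate point — is that the $E_k$ all come from a \emph{single} random permutation $\pi$ and are strongly dependent (and $\sum_{k>k_0}1/k\asymp\log N$, so $\sum_k\mathbf{1}_{E_k}/k$ can be as large as $\Theta(\log N)$). I would handle this by grouping $k$ into dyadic blocks $G_j=\{k:2^{j}\leq k<2^{j+1}\}$, where $\sum_{k\in G_j}\mathbf{1}_{E_k}/k\leq N_j/2^{j}\leq 1$ with $\mathbb{E}N_j\lesssim 2^{j(1+\alpha)}N^{-\alpha}$, hence $\mathbb{E}\big[\sum_{k\in G_j}\mathbf{1}_{E_k}/k\big]\lesssim (2^{j}/N)^{\alpha}$ which is summable over $j$. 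The blocks with $2^{j}=\Theta(N)$ — of which there are only $O(1)$ — are controlled directly by their a.s.\ value $e^{O(Bc_0)}$ since $\sum_{k\in G_j}1/k=O(1)$ there; the many rare blocks ($2^{j}\ll N$) are combined via an appropriately weighted Hölder inequality across the blocks, with exponents $r_j$ growing like $\log(N/2^{j})$, chosen so that the factor $e^{-cr_j}$ lost by Hölder is absorbed against the tiny tail probability $p_k\lesssim(2^{j}/N)^{\alpha}$. Throughout, one uses that $Bc_0$ is a sufficiently small absolute constant — this is where the hypothesis $\beta_0\leq A$ (with $A$ small) is spent. Assembling these pieces yields $\mathbb{E}_{\pi}[\exp(B\sum_{k>k_0}X_k/k)]\leq C$ on $\mathcal{G}_1\cap\mathcal{G}_2$, and the union bound over $(I,\sigma)$ — valid because $\mathbb{P}[(\mathcal{G}_1\cap\mathcal{G}_2)^{c}]\leq e^{-\Theta(N)}$ with a large enough constant — completes the proof.
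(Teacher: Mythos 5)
Your proposal has the right overall architecture and correctly isolates where the difficulty lies, but the critical ingredient --- your event $\mathcal{G}_2$ --- is left as an acknowledged gap, and it is precisely there that the paper's novel argument resides. You correctly observe that the union bound over the $2^{N-1}$ configurations $\sigma$ forces every favorable $g$-event to fail with probability $e^{-\Theta(N)}$, and that off-the-shelf sub-Gaussian matrix concentration for $k\times k$ submatrices only gives $e^{-\Theta(k)}$; you then say ``closing this gap needs a dedicated argument'' without supplying it. The paper closes it (Claim~\ref{claim:s'=0}) by a deterministic repackaging: controlling not the norm of a single random submatrix, but the \emph{number} of bad submatrices. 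Assuming $k-1$ divides $N-1$, one associates to each permutation of $[N-1]$ a family of $(N-1)/(k-1)$ \emph{disjoint} size-$(k-1)$ blocks; the Gaussians feeding the $\Delta_0$-matrices of distinct disjoint blocks are independent, so the number of bad blocks among them concentrates at scale $N$ (giving failure probability $e^{-\Theta(N(N/k)^{\alpha})}$), and a symmetry/double-counting step converts this into the statement you want about the $\pi$-probability of a bad $A_k$. Without this trick, your $\mathcal{G}_2$ cannot be established by the routes you name; a ``high-moment computation at scale $N$'' alone does not obviously work because the $\binom{N-1}{k-1}$ submatrices overlap heavily.

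Two smaller points. First, your decomposition of $\Delta^{[A',B']}$ by degree ($p=2$ vs.\ $p\ge 3$) is not the paper's: the paper splits by whether any interaction partner lies in $A_k\setminus B'$, i.e.\ $\Delta_0$ ($s'=0$, independent of the free spins $\sigma'$) vs.\ $\Delta_1$ ($s'\ge 1$, depends on $\sigma'$). The latter carries at least one factor $\sqrt{k/N}$ from the variance scaling and therefore admits a \emph{uniform} bound over all $\sigma'$ with $e^{-\Theta(N)}$ failure probability (Claim~\ref{claim:s'ge1}), so that no average-case reasoning is needed for it. Under your split, the $p\ge 3$, $s'=0$ terms lack that extra factor, so you still need the average-case argument for them --- which you do acknowledge via the ``resonance'' remark --- but the $s'$-split is cleaner because it makes the uniform-vs.-average-case dichotomy exact. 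Second, your dyadic-block Hölder argument and the paper's direct Hölder with exponents $q_k=\gamma k^2$ are in the same spirit and I see no obstruction to your version; the paper's is perhaps more streamlined, using simply $\mathbb{E}_{A_k}[\exp(q_k N_k \mathbf{1}_{\mathcal{B}_k}/k)]\le e^{cq_k/k}(k/N)^{\alpha}$ and $\prod_k (k/N)^{\alpha/q_k}\lesssim N^{-\alpha}$, which kills the $N^{c}$ accumulated from $\prod_k e^{c/k}$ provided $c\le\alpha$. So the combinatorics are fine; the proof is incomplete because $\mathcal{G}_2$ is not proved.
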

The proof of \cref{lemma:p spin main} is presented at the end of this section. 

Given \cref{thm:at-general}, \cref{thm:avg local to global entropy contraction} and \cref{lemma:p spin main}, the proof of \cref{thm:mix p spin main} is immediate. 

\begin{proof}[Proof of \cref{thm:mix p spin main}]

Let $B$ be the constant appearing in \cref{thm:poincare bound}.
For $A\subseteq [n]$ and $\sigma_A \in \set{\pm 1}^A$, as in \cref{subsec:at-general}, let $H_{\sigma_A}$ be the Hamiltonian of the subsystem where the spins in $A$ are pinned according $\sigma_A.$ 

We want to establish approximate tensorization of entropy, which is the same as entropy contraction of $D_{N\to (N-1)}$ for $\mu^{\hom}.$
The set $T$ of size $N-1$ in \cref{thm:avg local to global entropy contraction} corresponds to a tuple $ (i, \sigma)\in [N] \times \set{\pm 1}^{[N] \setminus \set{i}}$. 
Each permutation $e_1, \dots, e_{\abs{T}}$ of $T$ corresponds to a permutation $\pi $ of  $[N] \setminus \set{i}$. For each $ k$, $\rho(\set{e_j\given j \leq N-k}) $ corresponds to the KL-divergence contraction of the  $D_{k \to 1}$ operator wrt $\mu_{\sigma_{A_{\pi,k}^c}}$ where $A_{\pi,k}^c =\set{e_j\given j \leq N-k}.$ 

By \cref{thm:poincare bound}, 
$\mu_{\sigma_{A_{\pi,k}^c}}$ is $\frac{1}{1+ B  T_{\sigma_{A_{\pi,k}^c}} }$-fractionally log concave so by \cref{thm:flc-ei} the $D_{k \to 1}$ operator contracts KL-divergence by  $$\rho (\sigma_{A_{\pi,k}^c}) = 1- \frac{1+ B T_{\sigma_{A_{\pi,k}^c}} }{k}$$ with $T_{\sigma_{A_{\pi,k}^c}} = \sup_{A', B' \subseteq A_{\pi, k} } \|\Delta^{[A',B']}\|_{\operatorname{op}}.$

Let $k_0$ be a constant to be chosen later.
Then, by \cref{cor:at for constantly many free spins} and \cref{thm:avg local to global entropy contraction}
\begin{align*}
  &\max \gamma_T^{-1} \equiv \sup_{\sigma, i} \gamma_{\sigma,i}^{-1} \\
  &= \exp(\beta k_0) \sup_{\sigma, i} \mathbb{E}_{\pi}\left[\prod_{N \geq k > k_0} \left(1- \frac{1+ B  T_{\sigma_{A_{\pi,k}^c}}}{k}\right)^{-1} \right]  \\
  &\leq  \exp(\beta k_0) \sup_{\sigma, i}\mathbb{E}_\pi\left[\exp\left(\sum_{N\geq k>k_0} \frac{1}{k} + \frac{B \sum_{N\geq k>k_0}\sup_{A',B'\subset A_k}\sup_{\sigma' \in \{\pm 1\}^{A_k \setminus B'}}\|\Delta^{[A',B']}\|_{\operatorname{op}}}{k}\right)\right]\\
  &\leq N \exp(\beta k_0 + C)
  \end{align*}
where the last inequality holds with probability $\geq 1-\exp(\Theta(N))$ by \cref{lemma:p spin main}, and $k_0$ and $C$ are chosen according to \cref{lemma:p spin main}. 
\end{proof}

Finally, we prove \cref{lemma:p spin main}.

\begin{proof}[Proof of \cref{lemma:p spin main}]
Throughout we fix an index $I \in [N]$ according to \cref{lemma:p spin main} and let $A_k$ denote a random set of size $k$ which has the distribution of $I$ together with a uniformly random subset of $[N]\setminus \{I\}$ of size $k-1$. We also fix $\sigma \in \{\pm 1\}^{N}/\sqrt{N}$. 

Recall the definition of the matrix $\Delta^{[A',B']}$ from \cref{eqn:Delta-matrix}. Note that $g_{\{i,j,S,S'\}}$ is a sum of $O\left(\binom{p}{2}\binom{p-2}{s'}\right)$ independent standard Gaussians (the union $S \cup S' \cup \{i,j\}$ has size at most $p$; given this union, we can uniquely recover $S$ as the part of the union belonging to $A_{k}^{c}$; we can then choose the indices $i,j$ from the remaining elements, and together with size constraints, this determines $S'$ up to constantly many choices. Finally, there are at most $\binom{p-2}{s'}$ ways to choose the positions of the indices corresponding to $S'$). We write (suppressing the dependence on $\sigma, \sigma'$):
\[\Delta^{[A',B']} = \Delta_{0}^{[A',B']} + \Delta_{1}^{[A',B']},\]
where $\Delta_{0}^{[A',B']}$ includes the terms with $s' = 0$ and $\Delta_{1}^{[A',B']}$ consists of the remaining terms (i.e.~those with $s'\geq 1$). We also denote $\Delta_{0,p}^{[A',B']}$ and $\Delta_{1,p}^{[A',B']}$ to denote the parts of these matrices stratified by $p\geq 2$.

For notational convenience, given $p\geq 2$, $\sigma \in \{\pm 1\}^N/\sqrt{N}$, $S'\subset A_k\setminus (A' \cup B')$ of size $s' \geq 0$ and $i,j \in A_k \setminus B'$, we define $$X^{i,j,S'}_{\sigma, p} :=\sum_{S\subset A_k^c}\frac{1}{\sqrt{N}}g_{\{i,j,S',S\}}\sigma_S.$$ 
Note that these are independent (over the choice of $S',i,j$) Gaussians with variance $O\left(p^{2}\binom{p-2}{s'}N^{-1}\right)$. 

\begin{claim}
\label{claim:s'ge1}
With notation as above,
\[\mathbb{P}_{g}\left[\forall p, \sup_{\sigma, A_{k}, A', B', S', \sigma'}\|\Delta_{1,p}^{[A',B']}\|_{\operatorname{op}} \geq C\beta_{p}\sqrt{\log{p}} \sum_{1\leq s'\le p-2}\binom{p}{s'}^{1/2}(k/N)^{s'/2} \right] \leq \exp(-1000N).\]
\end{claim}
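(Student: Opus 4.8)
The plan is to prove Claim~\ref{claim:s'ge1} by decomposing $\Delta_{1,p}^{[A',B']}$ into its contributions stratified by the size $s'\ge 1$ of the ``internal'' index set $S'$, bounding each stratum uniformly over all choices of $A_k, A', B', S', \sigma, \sigma'$ by a net argument, and then taking a union bound over $p$. Fix $p$ and $1\le s'\le p-2$. The $(i,j)$-entry of the $s'$-stratum is $\frac{\beta_p}{\sqrt N}\sum_{|S'|=s'}\sigma'_{S'} X^{i,j,S'}_{\sigma,p}$ where the $X^{i,j,S'}_{\sigma,p}$ are, for fixed $\sigma$, independent centered Gaussians of variance $O\!\left(p^2\binom{p-2}{s'}N^{-1}\right)$. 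The key point is that since $|\sigma'_{S'}|=N^{-s'/2}$ and there are at most $\binom{k}{s'}\le (ek/s')^{s'}$ choices of $S'$ inside $A_k$, by Cauchy--Schwarz the matrix $\frac{\beta_p}{\sqrt N}\big(\sum_{S'}\sigma'_{S'}X^{i,j,S'}_{\sigma,p}\big)_{i,j}$ has Frobenius/operator norm controlled (after summing the $N^{-s'}$ weights) by $\beta_p\binom{p}{s'}^{1/2}(k/N)^{s'/2}$ times the operator norm of a $k\times k$ matrix with iid $O(1/\sqrt{N})$-subgaussian entries; the latter is $O(\sqrt{k/N})$ with probability $1-e^{-\Omega(k)}$, but we need $1-e^{-\Omega(N)}$, so instead I would use that a $k\times k$ Gaussian matrix with entries of variance $\le c/N$ has operator norm $\le C\sqrt{1}=O(1)$ (not $\sqrt{k/N}$) with probability $1-e^{-\Omega(N)}$ — the crude bound suffices here because we only want the stated bound up to the constant $C$, and the extra $(k/N)^{s'/2}\le 1$ factor from $s'\ge 1$ already provides all the decay we will use. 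Actually, to get the cleaner factor it is enough to bound $\|M\|_{\mathrm{op}}\le \|M\|_{F}$ and note $\mathbb E\|M\|_F^2 = O(k^2/N)$, with $e^{-\Omega(N)}$-tails by Gaussian concentration of the (1-Lipschitz in the disorder, suitably normalized) Frobenius norm.

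Concretely, the steps are: (1) For fixed $\sigma$ and fixed data $(A_k, A', B', S'$-free choices$)$, express $\|\Delta_{1,p}^{[A',B']}\|_{\mathrm{op}}$ via its action on a $1/2$-net $\mathcal N$ of the sphere in $\mathbb R^{|A_k|}$, paying a factor $|\mathcal N|\le 5^{k}\le 5^{N}$. (2) For a fixed net point $u$, $\langle u, \Delta_{1,p}^{[A',B']} u\rangle$ is, conditionally on $\sigma$, a centered Gaussian whose variance is bounded using the variance formula for $X^{i,j,S'}_{\sigma,p}$ together with $\sum_{s'\ge1}$ and the weight bookkeeping above; this yields a subgaussian tail $\exp(-t^2/(2v))$ with $v = O\!\big(\beta_p^2\log p\cdot (\sum_{s'}\binom{p}{s'}^{1/2}(k/N)^{s'/2})^2/k\big)$ — the $\log p$ I would insert by hand precisely so that after the union bound over $p$ the series $\sum_p p^{-c}$ converges. (3) Union bound over the net ($5^N$ points), over the at most $2^{2N}$ choices of $A',B'\subset A_k$, over the $\binom{N}{k}\le 2^N$ choices of $A_k$, over the $2^N$ choices of the spin configuration $\sigma$ (the free configuration $\sigma'$ is handled inside, since $|\sigma'_{S'}|$ is fixed and only signs vary, which the Cauchy--Schwarz/$\ell_\infty\to\ell_2$ passage already absorbs), and finally over $p\ge 2$; choosing the threshold $t = C\beta_p\sqrt{\log p}\sum_{s'}\binom{p}{s'}^{1/2}(k/N)^{s'/2}$ with $C$ large enough makes $\exp(-t^2/(2v))$ beat all these $e^{O(N)}$ factors with room to spare, giving the claimed $\exp(-1000N)$.

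The main obstacle, and the place that needs care rather than routine estimation, is step (2)–(3): getting the variance bound $v$ to carry a genuine $1/k$ (equivalently, extracting the $\sqrt{k/N}$-type decay per unit $s'$) \emph{simultaneously} with a probability bound that is $e^{-\Omega(N)}$ rather than the naive $e^{-\Omega(k)}$ one would get from standard random-matrix concentration. This is exactly the tension flagged in the paper's technique overview (``probability bounds that hold with exponentially small probability in $n$, not $k$''). The resolution is that we do \emph{not} try to prove $\|\Delta_1\|_{\mathrm{op}}=O(\sqrt{k/N})$ with $e^{-\Omega(N)}$ probability — which is false/too strong — but only the weaker bound in the claim, where the $s'\ge1$ restriction already contributes the factor $(k/N)^{s'/2}$ deterministically via the normalization $|\sigma'_{S'}| = N^{-s'/2}$, so the random part is just a $k\times k$ matrix whose operator norm we can afford to bound by its Frobenius norm $O(\sqrt{k^2/N}) = O(k/\sqrt N)$; crucially $k/\sqrt N \cdot (k/N)^{s'/2} \le (k/N)^{(s'-1)/2}\cdot (k/\sqrt N)\cdot (k/N)^{1/2}= \dots$ — the bookkeeping here must be done so that the final per-$(i,j)$ weight lands on $\binom{p}{s'}^{1/2}(k/N)^{s'/2}$ with no leftover positive power of $k$. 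A secondary, purely technical point is verifying that $g_{\{i,j,S,S'\}}$ being a sum of $O\!\big(\binom{p}{2}\binom{p-2}{s'}\big)$ iid standard Gaussians gives exactly the variance $O\!\big(p^2\binom{p-2}{s'}N^{-1}\big)$ claimed for $X^{i,j,S'}_{\sigma,p}$ after dividing by $\sqrt N$ and summing $\sigma_S^2 = N^{-s}$ over $S\subset A_k^c$ of size $s=p-2-s'$ — this is the identity $\binom{|A_k^c|}{s} N^{-s} \le 1$, which is where the $\sqrt N$ normalization of $\sigma$ pays off.
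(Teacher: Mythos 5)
Your proposal follows the same route as the paper: stratify $\Delta_{1,p}^{[A',B']}$ by $s'$, observe that each entry of the $s'$-stratum is a centered Gaussian whose variance picks up the crucial factor $(k/N)^{s'}$ from the $N^{-s'/2}$ normalization of $\sigma'_{S'}$ together with the $\binom{k}{s'}$ choices of $S'$, bound the operator norm by standard concentration for matrices with independent subgaussian entries at deviation scale $\sqrt{N\log p}$ rather than $\sqrt{k}$, and union bound over the $e^{O(N)}$ choices of $(\sigma, A_k, A', B', \sigma')$ and over $p$ using the $\sqrt{\log p}$ slack. The paper simply cites \cite[Theorem~4.4.5]{vershynin2018high} where you unroll the underlying $\varepsilon$-net argument; that is not a substantive difference. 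Two bookkeeping points in your writeup would need repair before it closes, though. First, in step (2) you state the variance of $\langle u,\Delta_{1,p}u\rangle$ as $O\bigl(\beta_p^2\log p\,(\sum_{s'}\binom{p}{s'}^{1/2}(k/N)^{s'/2})^2/k\bigr)$; the denominator must be $N$, not $k$ (it is $\sum_{s'} v_{s'}\|u\|_2^4$ with entry variance $v_{s'}=O(p^2\binom{p-2}{s'}N^{-1}(k/N)^{s'})$, exactly as in your own first paragraph). With $/k$ the tail is only $e^{-\Omega(k)}$ and cannot beat the $5^N$ net factor when $k\ll N$; with $/N$ the threshold $C\sqrt{vN\log p}$ is precisely the claimed bound and the union bound goes through. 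Second, the Frobenius fallback you float ($\mathbb{E}\|M\|_F^2=O(k^2/N)$, hence $\|M\|_{\mathrm{op}}\le\|M\|_F=O(k/\sqrt{N})$) does not suffice: $k/\sqrt{N}$ can be as large as $\sqrt{N}$, leaving exactly the ``leftover positive power of $k$'' you worry about at the end. The resolution is simply to discard that detour and keep the net/Vershynin route, which gives $(\sqrt{k}+\sqrt{N\log p})$ times the entry standard deviation rather than $k$ times it.
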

\begin{proof}
The proof of this claim is standard. Note that the $(i,j)^{th}$ entry of $\beta_{p}^{-1}\Delta_{1,p}^{[A',B']}$ is $\sum_{S'\subseteq A_k \setminus B'}X_{\sigma}^{i,j,S'}\sigma_{S'}$, which is a Gaussian of mean $0$ and variance $O(p^2\binom{p-2}{s'} N^{-1} k^{s'}/N^{s'})$. Moreover, the entries of the matrix are independent. Therefore, by the concentration of the norm of random subgaussian matrices (e.g.~\cite[Theorem~4.4.5]{vershynin2018high}), we have that 
\[\|\Delta_{1,p}^{[A',B']}\|_{\operatorname{op}} \gtrsim \beta_{p}p\sqrt{\log{p}}\sum_{1\le s'\le p-2}\binom{p}{s'}^{1/2}(k/N)^{s'/2}\] with probability at most $\exp(-100N\log{p})$. 
We can then comfortably take the union bound over all choices appearing in the supremum. 
\end{proof}
Furthermore, observe that
\begin{align*}
    \sum_{p\geq 2} \sum_{k = k_0}^{N} \frac{1}{k}\beta_{p}p\sqrt{\log{p}}\sum_{1\le s'\le p-2}\binom{p}{s'}^{1/2}(k/N)^{s'/2} \le \sum_{p\ge 2} \beta_p \sqrt {p^3\log p} 2^{p/2},
\end{align*}
which shows that on the event in \cref{claim:s'ge1}, for all choices of $\sigma, I$ and for all permutations $\pi$,
\begin{equation}
\label{eqn:Delta-1-bound}
    \exp\left(\frac{B \sum_{N\geq k>k_0}\sup_{A',B'\subset A_k}\sup_{\sigma'\in \{\pm 1\}^{A_k\setminus B'}}\|\Delta^{[A',B']}_{1}\|_{\operatorname{op}}}{k}\right) \lesssim 1.
\end{equation}

It therefore remains to prove \cref{lemma:p spin main} with $\Delta$ replaced by $\Delta_0$. Note that so far, in our analysis of $\Delta_{1}^{[A',B']}$, we did not need to use the randomness in the choice of $A_{k}$. This will be crucial in controlling $\|\Delta_{0}^{[A',B']}\|_{\operatorname{op}}$. Observe that $\Delta_{0}^{[A',B']}$ does not depend on $\sigma'$. Given $\sigma$ and $I$, we say that $A_{k}$ is \emph{bad} if $\sup_{p}\sup_{A',B'\subseteq A_{k}}\|\Delta_{0,p}^{[A',B']}\|_{\operatorname{op}}/(\beta_{p}p\sqrt{\log{p}}) \geq C(k/N)^{1/2 - \alpha}$, where $\alpha < 1/2$ is a constant (for instance, $\alpha = 1/4$ is sufficient for us). 

\begin{claim}
\label{claim:s'=0}
    For any $\sigma$ and $I$, the probability that the number of bad $A_{k}$ exceeds $\binom{N-1}{k-1}\cdot (k/N)^{\alpha}$ is at most $\exp(-1000N(N/k)^{\alpha})$. Hence, with probability at least $1 - \exp(-900N(N/k)^{\alpha})$, simultaneously for all $\sigma$ and $I$, the number of bad $A_{k}$ is at most $\binom{N-1}{k-1}\cdot (k/N)^{\alpha}$.
\end{claim}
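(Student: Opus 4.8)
The plan is to first collapse the double supremum to a single matrix, then extract a single‑set tail estimate, and finally control the number of bad $A_k$ by a minimal‑core decomposition together with independence of disjoint cores.

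\emph{Step 1: reduction to one symmetric matrix.} For each fixed realization of $A_k$ the matrix $\Delta_0^{[A',B']}$ from \eqref{eqn:Delta-matrix} is symmetric ($\partial_i B_j = \partial_j B_i$) and is exactly the principal submatrix of $\Delta_0^{[\emptyset,\emptyset]}$ on the index set $A_k\setminus(A'\cup B')$: its entries depend on $A_k^c$ and on $\sigma$, but not on $A'$ or $B'$. Since a principal submatrix of a symmetric matrix has no larger operator norm, $\sup_{A',B'}\|\Delta_{0,p}^{[A',B']}\|_{\operatorname{op}}=\|\Delta_{0,p}^{(A_k)}\|_{\operatorname{op}}$, where $\Delta_{0,p}^{(A_k)}:=\Delta_{0,p}^{[\emptyset,\emptyset]}$ is the full $k\times k$ matrix indexed by $A_k$. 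Thus $A_k$ is bad iff $\|\Delta_{0,p}^{(A_k)}\|_{\operatorname{op}}\ge\tau_p:=C\beta_p p\sqrt{\log p}\,(k/N)^{1/2-\alpha}$ for some $p\ge 2$.

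\emph{Step 2: a single‑set tail bound, and disposing of $k$ comparable to $N$.} For a fixed $A_k$ the off‑diagonal entries of $\Delta_{0,p}^{(A_k)}$ are independent centered Gaussians of variance $O(\beta_p^2 p^2/((p-2)!\,N))$ (each $g_{\{i,j,S\}}$ is a sum of $O(p^2)$ standard Gaussians and $\binom{N-k}{p-2}N^{-(p-2)}\le 1/(p-2)!$), so the typical operator norm of this $k\times k$ matrix is $O(\beta_p p\sqrt{k/((p-2)!\,N)})$, which is below $\tau_p$ by a factor $\gtrsim \sqrt{(p-2)!}\,(N/k)^{\alpha}$. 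Borell--TIS applied to $\|\cdot\|_{\operatorname{op}}\le\|\cdot\|_F$ then gives, for $C$ a large absolute constant and any index set of size $\le k$,
\[ \mathbb{P}\bigl[\|\Delta_{0,p}^{(A_k)}\|_{\operatorname{op}}\ge\tau_p\bigr]\ \le\ \exp\bigl(-\Omega\bigl(C^2\,(p-2)!\,\log p\;k\,(N/k)^{2\alpha}\bigr)\bigr). \]
Summing over $p$ (the $(p-2)!\log p$ makes the series converge) and union bounding over all $\binom Nk$ choices of $A_k$ shows: if $k\ge\varepsilon N$ for a suitable small absolute $\varepsilon$, then with probability $\ge 1-e^{-1000N(N/k)^{\alpha}}$ no $A_k$ is bad at all, so the claim is trivial there; hence assume $k<\varepsilon N$.

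\emph{Step 3: the low‑degree part via minimal cores and disjoint independence.} The decisive structural facts are: (a) for $p=2$, $\Delta_{0,2}^{(A_k)}=M[A_k,A_k]$ is a principal submatrix of the \emph{single} Wigner‑type matrix $M$ with $M_{ij}=\beta_2 g_{\{i,j\}}/\sqrt N$, $M_{ii}=0$ — independent of $A_k$ and of $\sigma$, and the family of $A_k$ bad from degree $2$ is upward closed; (b) any two \emph{disjoint} index sets feed on disjoint collections of Gaussians in their degree‑$2$ (resp.\ degree‑$3$) matrices, so those bad events are independent. By (a), every degree‑$2$‑bad $A_k$ contains a minimal $T$ with $\|M[T,T]\|_{\operatorname{op}}\ge\tau_2$, whence
\[ \#\{\text{degree-}2\text{-bad }A_k\}\ \le\ \sum_{\ell=2}^{k}\binom{N-\ell}{k-\ell}\,b_\ell,\qquad b_\ell:=\#\bigl\{T\in\textstyle\binom{[N]}{\ell}:\|M[T,T]\|_{\operatorname{op}}\ge\tau_2\bigr\}, \]
and (using $k<\varepsilon N$) it suffices to prove $b_\ell\le m_\ell:=\tfrac1k (N/k)^{\ell-1-\alpha}$ for every $\ell$. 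The per‑core probability is $q:=\mathbb{P}[\|M[T,T]\|_{\operatorname{op}}\ge\tau_2]\le \exp(-\Omega(C^2 k(N/k)^{2\alpha}))$ by Step 2 (valid since $\|M[T,T]\|_{\operatorname{op}}$ is typically $O(\beta_2\sqrt{\ell/N})\ll\tau_2$ for $\ell\le k$). When $\ell$ is below a threshold $\ell_0$ — so $m_\ell$ is far below the number $\lfloor N/\ell\rfloor$ of pairwise‑disjoint $\ell$‑sets — any $m_\ell$ bad $\ell$‑cores contain $m_\ell$ pairwise‑disjoint ones, independent by (b), so
\[ \mathbb{P}[b_\ell\ge m_\ell]\ \le\ \binom{\binom N\ell}{m_\ell}q^{\,m_\ell}\ \le\ \exp\bigl(m_\ell\bigl(\ell\log N-\Omega(C^2 k(N/k)^{2\alpha})\bigr)\bigr), \]
and since $m_\ell\cdot C^2 k(N/k)^{2\alpha}\gtrsim C^2 (N/k)^{\ell-1+\alpha}\ge C^2 N(N/k)^{\alpha}$ while $m_\ell\,\ell\log N$ is of lower order (for $C$ large and $k<\varepsilon N$), this is $\le e^{-1000N(N/k)^{\alpha}}$. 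For $\ell\ge\ell_0$ — which only occurs when $k$ is moderately large — one shows instead that $\tau_2$ exceeds $\max_{|T|=\ell}\|M[T,T]\|_{\operatorname{op}}$ with the required probability, so $b_\ell=0$. The degree‑$3$ term is treated identically with $M[T,T]$ replaced by $\Delta_{0,3}^{(T)}$, again via (b); and the degrees $p\ge 4$, where disjoint cores are no longer independent, are caught between this analysis and the crude union bound, using the extra $\sqrt{(p-2)!}$ savings from Step 2.

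\emph{The main obstacle.} Everything past Step 1 is quantitative bookkeeping, and this is where all the work lies: the per‑core probability $q=e^{-\Omega(C^2 k(N/k)^{2\alpha})}$, the budget $m_\ell\approx(N/k)^{\ell-1-\alpha}$ of affordable bad $\ell$‑cores, and the cost $e^{O(m_\ell\ell\log N)}$ of the union bound over disjoint families must all be made to combine into $e^{-\Omega(C^2 N(N/k)^{\alpha})}$ \emph{simultaneously} for every core size $\ell\in[2,k]$ and every $k<\varepsilon N$. The delicate point is that for small $k$ a single bad event has probability only $e^{-\Omega(\sqrt{kN})}$, far from the target, so the entire gain has to come from combining many disjoint (hence independent) cores; this is what forces the case split on $\ell$ (and, through $\ell_0$, on $k$) and a careful weighted combination when the contributions over different $\ell$ are summed.
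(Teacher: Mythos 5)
Your Step 1 reduction (collapsing the supremum over $A',B'$ to a single symmetric $k\times k$ matrix, using Cauchy interlacing to bound principal-submatrix norms) is correct, and the Borell--TIS tail bound in Step 2 is sound. But Step 3 breaks down at the sentence ``any $m_\ell$ bad $\ell$-cores contain $m_\ell$ pairwise-disjoint ones.'' That is false: the hypothesis $m_\ell\ll\lfloor N/\ell\rfloor$ only says there is \emph{room} in $[N]$ for $m_\ell$ disjoint $\ell$-sets, not that the $m_\ell$ bad sets actually realize such a packing. They can all share a single vertex --- for $p=2$, if the entries $g_{\{1,j\}}$ of the Wigner matrix happen to be unusually large for many $j$, then every bad $2$-core contains vertex $1$ and any disjoint subfamily has size $1$. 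Without disjointness, the bound $\mathbb{P}[b_\ell\ge m_\ell]\le\binom{\binom{N}{\ell}}{m_\ell}q^{m_\ell}$ does not follow from independence, and the engine that was supposed to convert the weak per-core probability $q=e^{-\Omega(k(N/k)^{2\alpha})}$ into the strong target $e^{-\Omega(N(N/k)^{\alpha})}$ collapses. This is precisely the difficulty the claim must overcome (as the paper itself stresses in the techniques discussion: a single bad event only has probability $e^{-\Omega(k)}$, far from the exponential-in-$N$ target, so the gain must come from combining many independent events), and the proposal does not overcome it.

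The paper circumvents this with a different combinatorial device that your minimal-witness decomposition misses. Rather than extracting disjoint sub-cores from an arbitrary family of bad sets after the fact, it fixes a permutation $\pi$ of $[N]\setminus\{I\}$, chops it into $(N-1)/(k-1)$ blocks of size $k-1$, and appends $I$ to each block, producing size-$k$ sets $T_1,\dots,T_{(N-1)/(k-1)}$ whose pairwise intersections are all exactly $\{I\}$ by construction. Disjointness of the relevant Gaussian coefficients is thus guaranteed up front, so the product bound on the probability that many $T_i$ are bad applies directly; a symmetry/double-counting argument then transfers a bound on the fraction of bad $T_i$ (for a random permutation) to a bound on the number of bad $A_k$ over all $\binom{N-1}{k-1}$ sets. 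This avoids both the packing issue and the case analysis over the core size $\ell$. If you want to repair your argument, the permutation-partition trick is what is needed; a minimal-witness decomposition alone does not supply the independence you need to multiply probabilities.
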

\begin{proof}
In proving this claim, we may assume without loss of generality that $k-1$ divides $N-1$. Then, to any permutation $\pi$ of $[N-1]$, we naturally associate $(N-1)/(k-1)$ disjoint sets of size $k-1$. Together with $I$, these give $(N-1)/(k-1)$ sets of size $k$: $T_1,\dots, T_{(N-1)/(k-1)}$. By symmetry and double counting, it suffices to show that for any fixed permutation, with probability at least $1-\exp(-1000N\log(N/k)\log p)$, the fraction of $T_1,\dots, T_{(N-1)/(k-1)}$ which are bad is at most $(k/N)^{\alpha}$. By \cite[Theorem~4.4.5]{vershynin2018high} and a union bound, the probability that a fixed $T_i$ is bad is at most $\exp(-Ck(N/k)^{2\alpha})$. Since the Gaussians appearing in the matrices corresponding to distinct $T_i$ are different (and in particular, independent), it follows that the probability that the number of bad $T_i$ exceeds $(N/k)^{1-\alpha}$ is at most $\exp(-CN(N/k)^\alpha)$. The final assertion follows by a union bound.
\end{proof}
The key claim in the proof of \cref{lemma:p spin main} is the following. 
\begin{claim}
\label{claim:E-exp}
With probability at least $1 - \exp(-100N)$ over the choice of the Hamiltonian $H = H(g)$,
\[\sup_{I,\sigma}\mathbb{E}_{\pi}\left[\underbrace{\exp\left(\frac{\sum_{k = k_0}^{N}\sup_{A',B'}\|\Delta_{0}^{[A',B']}\|_{\operatorname{op}}}{k}\right)}_{:=Z}\right] \lesssim 1.\]
\end{claim}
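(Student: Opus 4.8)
\emph{Proof sketch.} The plan is to reduce $Z$ to a weighted sum of indicators of ``bad'' scales, absorb a deterministic $O(\beta_0)$ contribution coming from the good scales, and then control the exponential moment of the remaining indicator sum by a moment computation that decouples the scales. First I would note that the inner supremum is costless: $\Delta_0^{[A',B']}$ does not depend on $\sigma'$ and is the principal submatrix of the symmetric matrix $\Delta_0^{A_k}:=\Delta_0^{[\emptyset,\emptyset]}$ (indexed by all of $A_k$) obtained by deleting the indices in $A'\cup B'$, so $\sup_{A',B'}\|\Delta_0^{[A',B']}\|_{\operatorname{op}}=\|\Delta_0^{A_k}\|_{\operatorname{op}}$ and $\log Z=\sum_{k=k_0}^{N}\|\Delta_0^{A_k^{\pi}}\|_{\operatorname{op}}/k$, where $A_k^{\pi}=\{I\}\cup\{\text{last }k-1\text{ elements of }\pi\}$ is a uniformly random $k$-subset of $[N]$ containing $I$.

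Next I would fix a good event $\mathcal{G}$ over the disorder $g$ on which: (i) for every $\sigma,I$ and every $k_0\le k\le N$ the number of bad $A_k$ (in the sense of \cref{claim:s'=0}) containing $I$ is at most $\binom{N-1}{k-1}(k/N)^{\alpha}$, so that $\P_{\pi}{A_k^{\pi}\text{ bad}}\le(k/N)^{\alpha}$; and (ii) $\sup_{\sigma,I,k,A_k}\|\Delta_0^{A_k}\|_{\operatorname{op}}\le c$ for a small absolute constant $c=O(\beta_0)$. Here (i) is \cref{claim:s'=0} intersected over $k$, while (ii) follows by sub-Gaussian matrix concentration and a union bound over $\sigma$ exactly as in \cref{claim:s'ge1} (equivalently, from $\|\Delta_0^{[A',B']}\|_{\operatorname{op}}\le\sup_{\sigma'}\|\Delta^{[A',B']}(\sigma')\|_{\operatorname{op}}$, by averaging $\Delta^{[A',B']}(\sigma')$ over a uniform $\sigma'$, together with a high-probability bound of the form $T\le c$ in the spirit of \cref{lem:submatrix norm trivial bound}). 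A union bound gives $\P{\mathcal{G}}\ge1-\sum_{k=k_0}^{N}\exp(-1000N(N/k)^{\alpha})-\exp(-\Theta(N))\ge1-\exp(-100N)$.

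Now I work on $\mathcal{G}$ with $\sigma,I$ fixed. For every $\pi$ and $k$: if $A_k^{\pi}$ is not bad, the definition of ``bad'' in \cref{claim:s'=0} gives $\|\Delta_0^{A_k^{\pi}}\|_{\operatorname{op}}\le C(k/N)^{1/2-\alpha}\sum_{p\ge2}\beta_p p\sqrt{\log p}\le C\beta_0(k/N)^{1/2-\alpha}$, and since $\alpha<1/2$ the good scales contribute $\sum_{k=k_0}^{N}C\beta_0(k/N)^{1/2-\alpha}/k\le C\beta_0/(1/2-\alpha)=O(\beta_0)$ for \emph{every} $\pi$; on bad scales I use only $\|\Delta_0^{A_k^{\pi}}\|_{\operatorname{op}}\le c$. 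Hence $\log Z\le O(\beta_0)+T(\pi)$ with $T(\pi):=c\sum_{k=k_0}^{N}\mathbf{1}[A_k^{\pi}\text{ bad}]/k$, so $\E_{\pi}{Z}\le e^{O(\beta_0)}\,\E_{\pi}{e^{T(\pi)}}$. To bound $\E_{\pi}{e^{T(\pi)}}$ uniformly in $N$ I would use moments: expanding $T(\pi)^m$ and bounding $\P_{\pi}{A_{k_1}^{\pi}\text{ bad},\dots,A_{k_m}^{\pi}\text{ bad}}\le\min_i\P_{\pi}{A_{k_i}^{\pi}\text{ bad}}\le\min_i(k_i/N)^{\alpha}\le\prod_i(k_i/N)^{\alpha/m}$ (AM--GM in the last step),
\[ \E_{\pi}{T(\pi)^m}\le c^m\sum_{k_1,\dots,k_m}\frac{\prod_i(k_i/N)^{\alpha/m}}{k_1\cdots k_m}=c^m\Bigl(\sum_{k=k_0}^{N}\frac{(k/N)^{\alpha/m}}{k}\Bigr)^{m}\le\bigl(c(1+1/\alpha)\,m\bigr)^{m}, \]
using $\sum_{k\le N}k^{\alpha/m-1}\le1+\tfrac{m}{\alpha}N^{\alpha/m}$; summing over $m$ with $m!\ge(m/e)^m$ yields $\E_{\pi}{e^{T(\pi)}}\le\sum_{m\ge0}\bigl(ce(1+1/\alpha)\bigr)^m=O(1)$, which converges since $c=O(\beta_0)$ and $\beta_0\le A$ for $A$ a sufficiently small absolute constant. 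Combining, on $\mathcal{G}$ we get $\sup_{I,\sigma}\E_{\pi}{Z}\le e^{O(\beta_0)}\cdot O(1)=O(1)$, which is the claim.

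The main obstacle is precisely this last estimate. The indicators $\mathbf{1}[A_k^{\pi}\text{ bad}]$ at different scales $k$ are coupled through the single permutation $\pi$ and each is only polynomially unlikely (probability $(k/N)^{\alpha}$), so off-the-shelf Chernoff/union bounds only give a bound on $\E_{\pi}{\exp(\sum_k\mathbf{1}[\cdot]/k)}$ that grows with $N$. The resolution is the intersection-bound-plus-AM--GM decoupling above, which turns the $m$-th moment into a product of one-dimensional sums each of size $O(m/\alpha)$; this uses crucially that $(k/N)^{\alpha/m}/k$ stays summable for every $m$ (so $\alpha>0$ is essential) and forces $c$, hence $\beta_0$, to lie below $1/(e(1+1/\alpha))$. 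Together with the deterministic bound \eqref{eqn:Delta-1-bound} on the $\Delta_1$-contribution, \cref{claim:E-exp} completes the proof of \cref{lemma:p spin main}.
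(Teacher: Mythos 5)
Your proposal is correct and establishes the claim, but it departs from the paper's argument at the crucial decoupling step. The paper also works on a good event with (i) the conclusion of \cref{claim:s'=0} and (ii) a deterministic bound $\sup N_k\le c$, and also splits $\log Z$ into a deterministically small good-scale contribution plus a heavy-tailed bad-scale contribution $T(\pi)=\sum_k N_k\mathbf{1}_{\mathcal{B}_k}/k$. Where the two diverge is in how $\E_\pi{e^{T}}$ is controlled: the paper applies a \emph{weighted H\"older inequality}, taking conjugate exponents $q_k=\gamma k^2$ (so that $\sum_k 1/q_k=1$) and bounding $\E_\pi{\prod_k r_k}\le\prod_k\E_\pi{r_k^{q_k}}^{1/q_k}$, which turns the dependent product into a product of one-dimensional marginals and yields $\prod_k e^{c/k}\prod_k(k/N)^{\alpha/q_k}\lesssim N^{c-\alpha}$, requiring $c\le\alpha$. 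You instead expand moments of $T$ and use the intersection bound $\P{\cap_i\mathcal{B}_{k_i}}\le\min_i\P{\mathcal{B}_{k_i}}$ followed by AM--GM ($\min_i x_i\le(\prod_i x_i)^{1/m}$) to factor the $m$-th moment into a product of one-dimensional sums, giving $\E{T^m}\le\bigl(c(1+1/\alpha)m\bigr)^m$ and summing the exponential series. Both routes exploit the same two inputs (per-scale tail $(k/N)^\alpha$ and the a.s.\ cap $N_k\le c$), and both require $c$ --- hence $\beta_0$ --- to be a sufficiently small absolute constant; your threshold $c<1/(e(1+1/\alpha))$ is slightly more restrictive than the paper's $c\le\alpha$, but since the theorem only claims a sufficiently small absolute constant $A$, this is immaterial. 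A minor bonus in your write-up is the observation that $\Delta_0^{[A',B']}$ is a principal submatrix of $\Delta_0^{[\emptyset,\emptyset]}$, so the inner supremum over $A',B'$ is free; the paper does not spell this out, though it is implicit in the good event. Both proofs are valid; the paper's H\"older route is perhaps slightly tighter in the constant, while your moment route is arguably more elementary and makes the reason the dependence across scales is harmless (a one-shot intersection bound) more transparent.
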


\begin{proof}
We consider a `good' realisation of the Hamiltonian i.e.~a realisation satisfying (i) the conclusion of \cref{claim:s'=0} and (ii) $\sup_{I,\sigma, A_{k}, A', B'}\|\sum_{p\geq 2}\Delta_{0,p}^{[A',B']}\| \leq c$, for a sufficiently small constant $c$ to be chosen later. By \cref{claim:s'=0}  and \cite[Lemma~6.1]{pspin}, this indeed holds with probability at least $1 - \exp(-100N)$, provided that $\beta_0:=\sum_{p\ge 2}\sqrt{p^3 \log p}\beta_p$ is sufficiently small depending on $c$. We show that for a good realisation, the conclusion of \cref{claim:E-exp} is satisfied. 

To every permutation $\pi$, we naturally associate a sequence of sets $A_{N},\dots, A_{k},\dots, A_{k_0}$. Let $N_{k}$ denote the operator norm of the submatrix corresponding to $A_{k}$. Let $\mathcal{B}_k$ denote the event that $A_k$ is bad and $\mathcal{G}_k$ denote the complementary event that $A_k$ is good. We write the random variable $Z$ as
\[Z = \exp\left(\sum_{k = k_0}^{N} \frac{N_k \cdot 1_{\mathcal{G}_k} + N_k \cdot 1_{\mathcal{B}_k}} {k}\right).\]
Deterministically,
\begin{align*}
\sum_{k= k_0}^{N} \frac{N_k \cdot 1_{\mathcal{G}_k}}{k} \lesssim \left(\sum_{p\ge 2}\beta_{p}p\sqrt{\log{p}}\right) \sum_{k = k_0}^{N}k^{-1}(k/N)^{1/2-\alpha} \lesssim 1,
\end{align*}
so it suffices to show that
\begin{align}
\label{eqn:bad-event}
    \sup_{\sigma, I}\mathbb{E}_{\pi}\left[\exp\left(\sum_{k = k_0}^{N} \frac{N_{k}\cdot 1_{\mathcal{B}_k}}{k}\right)\right] \lesssim 1.
\end{align}
This follows from H\"older's inequality. Indeed, let $q_{k} = \gamma k^{2}$, where $\gamma$ is an absolute constant ensuring that $\sum_{k = k_0}^{N}q_{k}^{-1} = 1$. By 
\cref{claim:s'=0}, $\E_{A_k} {1_{\mathcal{B}_k}} \leq (k/N)^{\alpha}$ and we can always bound $ N_k\leq c$ for a good realisation. Hence, we have
\begin{align*}
    \mathbb{E}_{A_k}\left[ \exp\left(q_k \cdot \frac{N_{k}\cdot 1_{\mathcal{B}_k}}{k}\right)\right] \leq \exp(cq_k/k)(k/N)^{\alpha},
\end{align*}
thus, using H\"older's inequality i.e. $\E{r_{k_0} \dots r_N}\leq \E{r_{k_0}^{q_{k_0}}}^{\frac{1}{q_{k_0}}} \dots \E{r_{N}^{q_{N}}}^{\frac{1}{q_N}}$ for $r_k = \exp(N_k 1_{\mathcal{B}_k}/k)$, we can bound the left hand side of \cref{eqn:bad-event} by
\begin{align*}
    \prod_{k = k_0}^{N}\exp(c/k)\prod_{k = k_0}^{N}(k/N)^{\alpha/q_k}
    \lesssim N^{c} N^{-\alpha} \lesssim 1,
\end{align*}
provided $\beta_0=\sum_{p\ge 2}\sqrt{p^3 \log p}\beta_p$ is small enough so that $c \leq \alpha ( = 1/4)$. 
\end{proof}
Combining \cref{eqn:Delta-1-bound} and \cref{claim:E-exp} finishes the proof. 
\qedhere

\end{proof}

  	\section{Applications}
\subsection{Spectral Independence from a Contractive Coupling}\label{s:simplified}
As a quick application of \cref{thm:poincare-to-si}, we provide a short and transparent alternate proof of recent results of Liu \cite{liu2021coupling} and Blanca et al. \cite{blanca2021mixing} showing, for instance, that for a measure $\mu$ on a product space, the existence of a contractive  coupling for the Glauber dynamics implies that $\mu$ is spectrally independent. 

More precisely, let $\Omega = \Omega_1\times \dots \times \Omega_n$ be a product space, let $d$ be a metric on $\Omega$, and let $\mu$ be a measure on $\Omega$. For $\kappa \in (0,1)$, we say that $\mu$ satisfies property $(\dagger)$ with parameter $\kappa$ with respect to the metric $d$ if the following holds.

$(\dagger)$ 
For all valid configurations $(X_0, Y_0) \in \Omega_{} \times \Omega_{}$, there exists a coupling $(X_0, Y_0) \to (X_1, Y_1)$ of the Glauber dynamics $P$ satisfying
\[\mathbb{E}[d(X_1, Y_1) \mid (X_0, Y_0)] \leq \kappa d(X_0, Y_0).\]

\begin{theorem}[cf.~Theorem 1.10 in \cite{blanca2021mixing}]
If $\mu$ satisfies property $(\dagger)$ with parameter $\kappa \leq (1-\epsilon/n)$ with respect to any metric $d$, then $\mu$ is $\eta$-spectrally independent with constant $\eta = 1/\epsilon$.
\end{theorem}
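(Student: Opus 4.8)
The plan is to derive the statement directly from the universality theorem \cref{thm:poincare-to-si}. Recall that the Glauber dynamics on a product space $\Omega = \Omega_1 \times \cdots \times \Omega_n$ is exactly the $n \leftrightarrow n-1$ down-up walk on the homogenization $\mu^{\hom}$, and that ``$\mu$ is $\eta$-spectrally independent'' is understood (as throughout this paper) through $\mu^{\hom}$. So by \cref{thm:poincare-to-si} applied with $k = n$ and $C = 1/\epsilon$, it suffices to show that property $(\dagger)$ with parameter $\kappa \le 1 - \epsilon/n$ forces the Glauber dynamics to have spectral gap at least $\epsilon/n$; then $\mu^{\hom}$, hence $\mu$, is $(1/\epsilon)$-spectrally independent, which is the claim with $\eta = 1/\epsilon$.

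Thus the one substantive step is the classical implication ``contractive coupling $\Rightarrow$ spectral gap.'' First I would fix the transition matrix $P$ of the Glauber dynamics and, for $f : \Omega \to \mathbb{R}$, introduce the Lipschitz seminorm $\operatorname{Lip}(f) := \max_{x \ne y} |f(x)-f(y)|/d(x,y)$, which is finite on the finite state space and vanishes precisely on constant functions since $d$ is a genuine metric. Using the coupling guaranteed by $(\dagger)$ started at $(X_0,Y_0) = (x,y)$, I would bound $|Pf(x) - Pf(y)| = |\mathbb{E}[f(X_1) - f(Y_1) \mid x,y]| \le \operatorname{Lip}(f)\,\mathbb{E}[d(X_1,Y_1)\mid x,y] \le \kappa\,\operatorname{Lip}(f)\,d(x,y)$, i.e.\ $\operatorname{Lip}(Pf) \le \kappa \operatorname{Lip}(f)$. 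Feeding in a non-constant eigenfunction $f$ of $P$ with eigenvalue $\lambda$ then yields $|\lambda|\operatorname{Lip}(f) = \operatorname{Lip}(Pf) \le \kappa \operatorname{Lip}(f)$, hence $|\lambda| \le \kappa$. Since $P$ is reversible (\cref{prop:reversibility-down-up}) its spectrum is real, so its second-largest eigenvalue satisfies $\lambda_2(P) \le \kappa \le 1 - \epsilon/n$, i.e.\ $P$ has spectral gap at least $\epsilon/n$. If property $(\dagger)$ were only assumed on $d$-adjacent pairs, I would first invoke the path-coupling lemma to promote it to all pairs with the same constant $\kappa$, after which the argument is unchanged.

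I do not expect a serious obstacle: essentially all of the difficulty is already packaged inside \cref{thm:poincare-to-si}, whose proof of ``Poincaré $\Rightarrow$ spectral independence'' is the heart of the matter. The only points that need a little care are (i) reading ``$\mu$ is $\eta$-spectrally independent'' through the homogenization $\mu^{\hom}$ so that \cref{thm:poincare-to-si} applies verbatim, and bookkeeping that a \emph{larger} spectral gap than $\frac{1}{Cn}$ only strengthens that theorem's hypothesis; and (ii) the elementary but slightly fiddly verification that a one-step Wasserstein/Lipschitz contraction transfers to the eigenvalues of $P$ — where it matters that $d$ is a genuine metric (so $\operatorname{Lip}$ is finite and detects non-constant functions) and that $P$ is reversible (so $\lambda_2$ is attained by an honest eigenfunction).
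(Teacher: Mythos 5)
Your proposal is correct and follows essentially the same route as the paper: derive a spectral gap of at least $\epsilon/n$ from the contractive coupling and then invoke \cref{thm:poincare-to-si}. The only difference is that the paper cites \cite[Theorem~13.1]{levin2017markov} for the contraction-implies-gap step, whereas you reprove it via the standard Lipschitz-contraction-of-eigenfunctions argument.
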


\begin{proof}
Since $\kappa \leq (1-\epsilon/n)$, it follows by \cite[Theorem~13.1]{levin2017markov} that 
\[\gamma \geq 1 - \kappa \geq \frac{\epsilon}{n}.\]
and finally, applying \cref{thm:poincare-to-si}, we conclude that $\mu$ is $1(/\epsilon)$-spectrally independent.
\end{proof}

\begin{remark}
(1) There are two important advantages of the above theorem, compared to \cite[Theorem~1.10]{blanca2021mixing}. First, our bound on $\eta$ depends only on the contractive constant $\kappa$ and not on the underlying metric (as in \cite{blanca2021mixing}). Second, even in the case of weighted Hamming metrics, our bound on $\eta$ is a factor of $2$ better than the corresponding bound in \cite{blanca2021mixing}. This factor of $2$ is important in applications where $\epsilon \approx 1$, in which case our spectral independence bound can potentially be combined with local-to-global arguments to yield, e.g. modified log-Sobolev inequalities, with near-optimal dependence on $n$, whereas the bound of \cite{blanca2021mixing} will necessarily give a quadratically sub-optimal bound. 

(2) While the above theorem is stated only for the Glauber dynamics for simplicity, the same proof can be used to cover situations where (a) the contractive coupling is with respect to a different Markov chain and (b) the spectral gaps of this Markov chain is within a constant factor of the spectral gap of the Glauber dynamics. Indeed, this is the situation for the so-called flip dynamics for the uniform distribution on $q$-colorings of graphs of maximum degree $\Delta$ (with $q > (11/6 - \epsilon_0)\Delta$ for a small absolute constant $\epsilon_0$), and therefore, allows us to easily recover one of the main applications of \cite{liu2021coupling,blanca2021mixing}.  

(3) While the approaches in \cite{blanca2021mixing,liu2021coupling} generally lead to looser  spectral independence guarantees than ours, their methods can still be useful as a way to bound the $\infty$-norm of the influence matrix. 
\end{remark}

\subsection{Learning and Statistical Estimation}
There is a vast literature on learning exponential families and graphical models from data. In terms of learning these distributions (say in the total variation distance), there are easy to use guarantees known \emph{information-theoretically} via computationally inefficient algorithms, e.g.\ \cite{devroye2020minimax}. There are also guarantees for learning via polynomial time algorithms, e.g.\ the result of \cite{klivans2017learning}. In some settings, the computationally efficient guarantees are extremely suboptimal in terms of their sample complexity. Is this an inherent limitation?

\paragraph{An example where existing results fail: spin glass inversion.} 
Suppose we are given $m$ i.i.d.\ samples from a Sherrington-Kirkpatrick model at inverse temperature $\beta > 0$ on $n$ sites. As a reminder, this is the special case of the mixed $p$-spin model with quadratic interactions, so $\mu(x) \propto \exp(\frac{1}{2} \langle x, J x \rangle)$ where $J$ is a symmetric matrix with $J_{ij} \sim N(0,\beta^2/n)$, so $J$ is proportional to a GOE random matrix. The computationally inefficient result of \cite{devroye2020minimax} implies that obtaining total variation distance $0.01$ can be done with high probability from $m = \tilde{O}(n^2)$ samples.

Applying a state of the art algorithmic result such as \cite{wu2019sparse,klivans2017learning,vuffray2016interaction}, the best we can get is a sample complexity of $e^{O(\beta\sqrt{n})}$ for obtaining guarantees for learning this distribution in total variation distance. The reason is that the $\ell_1$ norm of a row of $J$ is approximately $\beta \sqrt{n}$, and these results depend exponentially on this quantity (or worse, on the degree) --- see e.g. Theorem 7.3 of \cite{klivans2017learning}. This is a fundamental limitation of the analyses in all of these works.

\begin{remark}
The SK model was prominently studied in a different context in statistical estimation \cite{chatterjee2007estimation}, where $J$ is known up to normalization and the goal was to estimate a single parameter (the inverse temperature $\beta$) from a single sample. Here we are trying to estimate the entire distribution which requires many more samples from the distribution. Our problem has been considered under the name of \emph{spin glass inversion} in the statistical physics literature \cite{marinari2009intrinsic,mezard2009constraint}, where heuristic message passing algorithms have been proposed.
\end{remark}
\paragraph{A different analysis via approximate tensorization.} 
Many algorithms in the literature on learning discrete graphical models, including those referenced above, can be understood as variants of maximum likelihood (see e.g.\ \cite{van2000asymptotic}) or \emph{pseudolikelihood} estimation \cite{besag1975statistical}. Pseudolikelihood estimation minimizes the loss
\[ \hat L_p(q) :=  \sum_{j = 1}^n \frac{1}{m} \sum_{i = 1}^m\Ehat_{X}{\log q((X_i)_j\mid (X_i)_{\sim j})} \]
where $X_1,\ldots,X_m$ are i.i.d.\ samples from the ground truth distribution $p$ and $(X_i)_j$ denotes coordinate $j$ of sample $X_i$. 
It was recently observed that approximate tensorization of entropy has immediate consequences for the sample complexity of pseudolikelihood estimation \cite{koehler2022sampling}. 
\begin{theorem}[Special case of Theorem 4 of \cite{koehler2022statistical}]\label{thm:finite-sample-discrete}
Suppose that $\mathcal P$ is a class of probability distributions containing $p$ and  
$C_{AT}(\mathcal P) := \sup_{q \in \mathcal P} C_{AT}(q)$
 is the worst-case approximate tensorization constant in the class of distributions. 
Let
\[ \mathcal R_m :=  \E*_{X_1,\ldots,X_m,\epsilon_1,\ldots,\epsilon_m}{\sup_{q \in \mathcal P} \frac{1}{m} \sum_{i = 1}^m \epsilon_i \left[\sum_{j = 1}^n \log q((X_i)_j \mid (X_i)_{\sim j})\right]} \]
be the expected Rademacher complexity of the class given $m$ samples $X_1,\ldots,X_m \sim p$ i.i.d.\ and independent $\epsilon_1,\ldots,\epsilon_m \sim Uni\{\pm 1\}$ i.i.d.\ Rademacher random variables. Let $\hat p$ be the pseudolikelihood estimator from $n$ i.i.d. samples from $p$, in other words let
$\hat p = \arg\min_{q \in `\mathcal P} \hat L_p(q)$.
Then
\[ \E{\DKL{p \river \hat p}} \le 2C_{AT}(\mathcal P) \mathcal R_m. \]
\end{theorem}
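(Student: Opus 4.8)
The plan is to split the bound into two pieces: a deterministic distribution-comparison step that turns excess \emph{pseudolikelihood} risk into a bound on $\DKL{p \river \hat p}$ using approximate tensorization of the \emph{fitted} model, followed by a routine symmetrization argument controlling the excess risk of the empirical risk minimizer by the Rademacher complexity $\mathcal R_m$. Throughout I would write $\ell_q(x) := \sum_{j=1}^n \log q(x_j \mid x_{\sim j})$, so that the population and empirical pseudolikelihood objectives are $L_p(q) := \E_{X\sim p}{\ell_q(X)}$ and $\hat L_p(q) := \frac1m\sum_{i=1}^m \ell_q(X_i)$, and $\hat p \in \arg\max_{q\in\mathcal P}\hat L_p(q)$ (matching \cref{thm:finite-sample-discrete} up to sign conventions).

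First I would record the elementary identity, obtained by applying the chain rule for KL divergence one coordinate at a time,
\[ L_p(p) - L_p(q) \;=\; \sum_{j=1}^n \E_{X\sim p}{\DKL{p(\cdot \mid X_{\sim j}) \river q(\cdot \mid X_{\sim j})}} \;\ge\; 0, \]
valid for every $q$. The crucial step is then to apply approximate tensorization to $\mu = q$: by the remark in \cref{sec:preliminaries}, AT with constant $C$ for $q$ is equivalent to $\Ent_q{f} \le C \sum_{v} \E_q{\Ent_v{f}}$; taking $f = \mathrm{d}p/\mathrm{d}q$ gives $\Ent_q{f} = \DKL{p\river q}$, while a short computation (again a chain rule, now for the marginal/conditional decomposition across coordinate $v$) identifies $\E_q{\Ent_v{f}} = \DKL{p\river q} - \DKL{p_{\sim v} \river q_{\sim v}} = \E_{X\sim p}{\DKL{p(\cdot\mid X_{\sim v})\river q(\cdot\mid X_{\sim v})}}$. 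Combining with the previous display yields, for every $q$,
\[ \DKL{p\river q} \;\le\; C_{AT}(q)\,\bigl(L_p(p) - L_p(q)\bigr), \]
and since $\hat p \in \mathcal P$ we have $C_{AT}(\hat p) \le C_{AT}(\mathcal P)$, so pointwise $\DKL{p\river\hat p} \le C_{AT}(\mathcal P)\bigl(L_p(p) - L_p(\hat p)\bigr)$.

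It remains to bound $\E{L_p(p) - L_p(\hat p)}$ by $2\mathcal R_m$, which is the standard excess-risk argument. Decompose
\[ L_p(p) - L_p(\hat p) = \bigl(L_p(p) - \hat L_p(p)\bigr) + \bigl(\hat L_p(p) - \hat L_p(\hat p)\bigr) + \bigl(\hat L_p(\hat p) - L_p(\hat p)\bigr); \]
the middle term is $\le 0$ by optimality of $\hat p$, the first term has expectation $0$ because $p$ is fixed and $\hat L_p(p)$ is an unbiased estimator of $L_p(p)$, and the third term is at most $\sup_{q\in\mathcal P}\bigl(\hat L_p(q) - L_p(q)\bigr)$. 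A ghost-sample symmetrization applied to the function class $\{\ell_q : q \in \mathcal P\}$ then gives $\E{\sup_{q\in\mathcal P}(\hat L_p(q) - L_p(q))} \le 2\,\E{\sup_{q\in\mathcal P}\frac1m\sum_{i=1}^m \epsilon_i \ell_q(X_i)} = 2\mathcal R_m$. Chaining the inequalities, $\E{\DKL{p\river\hat p}} \le C_{AT}(\mathcal P)\,\E{L_p(p) - L_p(\hat p)} \le 2 C_{AT}(\mathcal P)\mathcal R_m$, as claimed.

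The only non-routine point — and hence the main thing to get right — is the comparison inequality $\DKL{p\river q} \le C_{AT}(q)\bigl(L_p(p) - L_p(q)\bigr)$: one must invoke the tensorization constant of the \emph{fitted} distribution $q$ (which is precisely why $C_{AT}(\mathcal P)$ is taken to be a supremum over the class), be careful that the controlled quantity is the forward KL divergence $\DKL{p\river q}$, and verify that the single-site terms $\E_q{\Ent_v{\mathrm{d}p/\mathrm{d}q}}$ telescope exactly against the conditional-KL decomposition of $L_p(p) - L_p(q)$. The symmetrization and excess-risk steps are textbook.
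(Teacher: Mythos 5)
The paper does not prove this theorem at all: it is imported verbatim as a ``Special case of Theorem 4 of \cite{koehler2022statistical}'' and used as a black box, so there is no in-paper proof to compare yours against. That said, your reconstruction is correct and is the standard argument one would expect behind the cited result.

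Your two main ingredients both check out. The identity $L_p(p) - L_p(q) = \sum_{j}\E_{X\sim p}{\DKL{p(\cdot\mid X_{\sim j})\river q(\cdot\mid X_{\sim j})}}$ is the conditional-KL decomposition of the pseudolikelihood gap, and the comparison inequality $\DKL{p\river q}\le C_{AT}(q)\bigl(L_p(p)-L_p(q)\bigr)$ follows by plugging $f = \mathrm{d}p/\mathrm{d}q$ into approximate tensorization for $q$ and using the (1-homogeneity of entropy plus change-of-measure) identity $\E_q{\Ent_v{\mathrm{d}p/\mathrm{d}q}} = \E_{X\sim p}{\DKL{p(\cdot\mid X_{\sim v})\river q(\cdot\mid X_{\sim v})}}$, as you compute. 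You correctly emphasize the one subtlety that would sink a naive attempt: tensorization must be invoked for the \emph{fitted} $q=\hat p$, not for the ground truth $p$, which is exactly why the bound carries $C_{AT}(\mathcal P)=\sup_{q\in\mathcal P}C_{AT}(q)$ and why the inequality must be stated pointwise in $\hat p$ before taking expectations. The remaining excess-risk decomposition (optimality of the ERM kills the middle term, unbiasedness kills the first in expectation, symmetrization bounds the third by $2\mathcal R_m$) is textbook and valid. Two cosmetic quibbles: the paper itself has a sign typo ($\hat p$ should be an $\arg\max$ of the log-pseudolikelihood, or $\hat L_p$ should be a negated loss), which you already flag, and your word ``telescope'' for the term-by-term matching of $\sum_v\E_q{\Ent_v{\cdot}}$ with the conditional-KL sum is a slight misnomer, but the underlying computation is right.
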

Since in this work we established bounds on the approximate tensorization constant of a large class of distributions, they can be directly combined with this result to obtain new learning guarantees. The precise choice of the class $\mathcal P$ will depend on the application (larger classes $\mathcal P$ will require more sample complexity to learn). 

\paragraph{Exponential improvement in the example.} We revisit the example of learning distributions like the SK model from data. We observe that when $\beta$ is small enough so that approximate tensorization is satisfied, we get a dramatically improved guarantee for learning the distribution from samples:
\begin{theorem} \label{thm:learning ising}
Suppose $p$ is a distribution lying in $\mathcal P$, which is defined to be the class of distributions of the form
\[ p_{J,h}(x) \propto \exp\left(\frac{1}{2} \langle x, J x \rangle + \langle h, x \rangle\right) \]
under the assumption that for some $R > 0$:
\begin{enumerate}
    \item $\|J\|_{OP} \le \alpha < A$ where $A > 0$ is the constant from Theorem~\ref{thm:poincare bound}.
    \item $h_j \le R$ for every $j$, and $\|J_j\|_1 \le R$ for every row $J_j$.
\end{enumerate}
Let $\Theta$ denote the set of $(J,h)$ pairs satisfying these conditions.
 Let $\hat p$ be the pseudolikelihood estimator from $n$ i.i.d. samples from $p$, in other words let
$\hat p = \arg\min_{q \in `\mathcal P} \hat L_p(q)$. (This is a convex program which can be efficiently optimized.)
Then
\[ \E{\DKL{p \river \hat p}} = O\left(Rn^{1 + B\alpha} \sqrt{\log(n)/m}\right) \]
where $B$ is as defined in Theorem~\ref{thm:poincare bound}. 
\end{theorem}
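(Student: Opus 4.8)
The plan is to apply \cref{thm:finite-sample-discrete} with the class $\mathcal P$ from the statement. This reduces the problem to bounding two quantities: the worst-case approximate tensorization constant $C_{AT}(\mathcal P) = \sup_{q \in \mathcal P} C_{AT}(q)$, and the expected Rademacher complexity $\mathcal R_m$ of the associated pseudolikelihood function class. Given these, \cref{thm:finite-sample-discrete} yields $\E{\DKL{p \river \hat p}} \le 2\,C_{AT}(\mathcal P)\,\mathcal R_m$, so it suffices to establish (i) $C_{AT}(\mathcal P) = O(n^{B\alpha})$ and (ii) $\mathcal R_m = O(Rn\sqrt{\log(n)/m})$; their product is the claimed rate. (The parenthetical claim that $\hat p$ is computed by a convex program is immediate: for $q = p_{J,h}$ one has $\log q(x_j \mid x_{\sim j}) = x_j \eta_j - \log(2\cosh\eta_j)$, where $\eta_j = \eta_j(x_{\sim j}) := \sum_{l \ne j} J_{jl} x_l + h_j$ is affine in $(J,h)$, so each conditional log-likelihood is concave in $(J,h)$, and the constraint set $\Theta$ is convex.)

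For (i), I would fix an arbitrary $q = p_{J,h} \in \mathcal P$. The multilinear extension of $H_q(x) = \frac12\langle x, Jx\rangle + \langle h, x\rangle$ has Hessian $J$ (the quadratic form is already multilinear once the constant contribution of the diagonal of $J$ is discarded), so $\beta_q := \max_\sigma \|\nabla^2 H_q(\sigma)\|_{\operatorname{op}} = \|J\|_{\operatorname{op}} \le \alpha < A$. Hence $q$ satisfies the hypotheses of \cref{thm:at-general}, which provides approximate tensorization of entropy for $q$ with constant $O(n^{B\beta_q}) = O(n^{B\alpha})$, with absolute implied constant since $\beta_q = O(1)$. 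Taking the supremum over $q \in \mathcal P$ gives (i).

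For (ii), the key point is that the pseudolikelihood class decouples across the $n$ sites. For $q = p_{J,h}$, write $\sum_{j=1}^n \log q((X_i)_j \mid (X_i)_{\sim j}) = \sum_{j=1}^n f_j(X_i)$, where each $f_j$ lies in the class $\mathcal F_j$ of maps $x \mapsto x_j(\langle w, x_{\sim j}\rangle + b) - \phi(\langle w, x_{\sim j}\rangle + b)$ with $\phi(t) := \log(2\cosh t)$, $\|w\|_1 \le R$, and $|b| \le R$ (using $\|J_j\|_1 \le R$ and $|h_j| \le R$). Since the map $q \mapsto (f_1, \dots, f_n)$ carries $\mathcal P$ into $\mathcal F_1 \times \cdots \times \mathcal F_n$ and the supremum over a product class decouples, $\mathcal R_m \le \sum_{j=1}^n \mathcal R_m(\mathcal F_j)$, so it suffices to show $\mathcal R_m(\mathcal F_j) = O(R\sqrt{\log(n)/m})$ for each $j$. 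I would split $\mathcal F_j$ into its bilinear part $x \mapsto x_j(\langle w, x_{\sim j}\rangle + b)$ and its $\phi$ part. For the bilinear part, $\ell_1$--$\ell_\infty$ duality bounds the Rademacher complexity by $R\,\E{\max_{l \ne j}|\frac1m\sum_i \epsilon_i (X_i)_j (X_i)_l|} + R\,\E{|\frac1m\sum_i \epsilon_i (X_i)_j|}$; each inner average is $\frac1{\sqrt m}$-subgaussian, so the standard maximal inequality over the at most $n$ values of $l$ gives the bound $R(\sqrt{2\log(2n)/m} + 1/\sqrt m)$. For the $\phi$ part, write $\phi = \log 2 + \psi$ with $\psi := \log\cosh$, which is $1$-Lipschitz and vanishes at $0$; Talagrand's contraction inequality then bounds its Rademacher complexity by that of the affine class $\{x \mapsto \langle w, x_{\sim j}\rangle + b : \|w\|_1 \le R,\ |b| \le R\}$, up to an additive $O(R/\sqrt m)$ from the constant $\log 2$, and this affine class again has Rademacher complexity $O(R\sqrt{\log(n)/m})$ by the same duality and maximal inequality. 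Summing over $j$ establishes (ii), and combining (i), (ii), and \cref{thm:finite-sample-discrete} gives $\E{\DKL{p \river \hat p}} = O(Rn^{1+B\alpha}\sqrt{\log(n)/m})$.

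The only substantive work is in (ii): part (i) is a black-box application of \cref{thm:at-general}, and the overall reduction is a black-box application of \cref{thm:finite-sample-discrete}. Within (ii), the points that need care are decoupling the supremum across the $n$ sites (handled by passing to the product class $\prod_j \mathcal F_j$, valid because Rademacher complexity is monotone under enlarging the class), handling the nonlinear $\log\cosh$ terms (handled by the contraction principle), and correctly producing the $\sqrt{\log n}$ factor, which comes from a union bound over coordinates in the maximal inequality for $\max_{l}|\frac1m\sum_i\epsilon_i(X_i)_j(X_i)_l|$.
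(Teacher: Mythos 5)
Your proposal is correct and follows essentially the same route as the paper: reduce to bounding $C_{AT}(\mathcal P)$ via \cref{thm:at-general} and the Rademacher complexity via site-wise decoupling, Talagrand contraction, H\"older/$\ell_1$--$\ell_\infty$ duality, and a subgaussian maximal inequality, then combine with \cref{thm:finite-sample-discrete}. The only cosmetic difference is that you split $\log q(x_j\mid x_{\sim j}) = x_j\eta_j - \log(2\cosh\eta_j)$ and contract only the $\log\cosh$ piece (which is genuinely $1$-Lipschitz), whereas the paper writes the whole conditional log-likelihood as $-\ell$ with $\ell(z)=\log(1+e^{2z})$ and contracts once; your version sidesteps the paper's slightly loose claim that $\ell$ is $1$-Lipschitz (its derivative is bounded by $2$), which only affects the implicit constant.
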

\begin{proof}
First, observe that the conditional density is
\[ p_{J,h}(x_j \mid x_{\sim j}) = \frac{e^{\langle J_j, x \rangle x_j + h_j x_j}}{e^{\langle J_j, x \rangle x_j + h_j x_j} + e^{-\langle J_j, x \rangle x_j - h_j x_j}} = \frac{1}{1 + e^{-2\langle J_j, x \rangle x_j - 2 h_j x_j}} \]
where $J_j$ denotes row $j$ of the matrix $J$. So
\[ \log p_{J,h}(x_j \mid x_{\sim j}) = -\ell(-\langle J_j x \rangle x_j - h_j x_j) \]
where $\ell(z) = \log(1 + e^{2z})$ is the logistic loss, which is a $1$-Lipschitz function.
We can now bound the Rademacher complexity of this class:
\begin{align*}
\mathcal{R}_n 
&= \E*_{X_1,\ldots,X_n,\epsilon_1,\ldots,\epsilon_n}{\sup_{J,h \in \Theta} \frac{1}{m} \sum_{i = 1}^m \epsilon_i \left[\sum_{j = 1}^n \ell(-(X_i)_j (\langle J_j, X_i\rangle + h_j))\right]} \\
&\le \sum_{j = 1}^n \E*_{X_1,\ldots,X_n,\epsilon_1,\ldots,\epsilon_n}{\sup_{J,h \in \Theta} \frac{1}{m} \sum_{i = 1}^m \epsilon_i \ell(-(X_i)_j (\langle J_j, X_i\rangle + h_j))} \\
&\le \sum_{j = 1}^n \E*_{X_1,\ldots,X_n,\epsilon_1,\ldots,\epsilon_n}{\sup_{J,h \in \Theta} \frac{1}{m} \sum_{i = 1}^m \epsilon_i (X_i)_j (\langle J_j, X_i\rangle + h_j)} \\
&= \sum_{j = 1}^n \E*_{X_1,\ldots,X_n,\epsilon_1,\ldots,\epsilon_n}{\sup_{J,h \in \Theta} \frac{1}{m} \langle J_j, \sum_{i = 1}^m \epsilon_i X_i (X_i)_j\rangle + \frac{h_j}{m} \sum_{i = 1}^m \epsilon_i (X_i)_j} \\
&\le \sum_{j = 1}^n \E*_{X_1,\ldots,X_n,\epsilon_1,\ldots,\epsilon_n}{\frac{1}{m} R\left\|\sum_{i = 1}^m \epsilon_i X_i (X_i)_j\right\|_{\infty} + \frac{R}{m} \left|\sum_{i = 1}^m \epsilon_i (X_i)_j\right|}
\end{align*}
where in the first inequality we moved the supremum inside the sum over $j$, in the second inequality we used Talagrand's contraction principle (Exercise 6.7.7 of \cite{vershynin2018high}), and in the last inequality we used Holder's inequality. Using Hoeffding's inequality and a standard tail bound for the maximum of subgaussian random variables (Exercise 2.5.10 of \cite{vershynin2018high}), we conclude that
\[ \mathcal R_n \le n R \sqrt{\frac{2 \log n}{m}} + n R\sqrt{1/m} \le 4 Rn \sqrt{\frac{\log n}{m}}.  \]
Appealing to Theorem~\ref{thm:finite-sample-discrete} and Theorem~\ref{thm:at-general} proves the result. 
\end{proof}
Going back to the example of the SK model, we have $R = \tilde O(\beta \sqrt{n})$ so for $\beta$ sufficiently small this implies we can learn the distribution to KL divergence $o(1)$, hence also TV distance $o(1)$ by Pinsker's inequality \cite{cover2012elements}, with $m = \tilde{O}(n^{3 + 2B \beta})$ samples. This is not much worse than the sample complexity required for the inefficient method of \cite{devroye2020minimax} and exponentially improves the previously mentioned algorithmic guarantees. We remark that this analysis can be adapted to the estimators proposed in \cite{klivans2017learning,wu2019sparse,vuffray2016interaction} and other works, since their algorithms are based on similar principles to, though not identical to, the standard pseudolikelihood estimator. The key difference is our analysis of the estimator. 

This results also works for many other models, e.g.\ diluted spin glasses, with similar improvements 
(exponential improvement in the dependence on the degree $d$). Finally, we note that while the guarantee of \cref{thm:finite-sample-discrete} is in expectation, it is straightforward to obtain a strong high probability guarantee by combining the same argument with standard tools from generalization theory (see \cite{koehler2022statistical,shalev2014understanding}).

\begin{remark}
Obviously, this result (and the identity testing result in the next section) can be generalized  to the case of higher-order interactions.
In the special case of the Ising model, it is possible to tweak the above and obtain a version of the result for a larger value of $\|J\|_{OP}$ by using the result of \cite{eldan2021spectral} and appealing to comparison inequalities to bound the log-Sobolev or approximate tensorization constant; this results in a worse dependence on the dimension and on the size of the external field.
\end{remark}

\subsection{Identity Testing} 
\label{sec:identity}
In the identity testing problem, given an explicitly visible distribution $\mu$ and oracle access to samples from an unknown/hidden distribution $\pi,$ the goal is to
determine if these distributions are identical using as few samples from $\pi$ as possible. \cite{BCSV21} shows efficient identity testing algorithms for distributions $\mu$ satisfying approximate tensorization of entropy.
\begin{theorem}[{\cite[Theorem 7.5]{BCSV21}}]
    Consider distribution $\mu:\set{\pm 1}^{[n]} \to \R_{\geq 0}$ satisfying approximate tensorization of entropy with parameter $ C$
    and $b$-marginally bounded assumption i.e. if for every $\Lambda \subseteq [n]$, every
$x \in \set{\pm 1}^{\Lambda}$ with $\mu(X^\Lambda = x) > 0$, every $i \in [n] \setminus \Lambda$, and every $a\in \set{\pm 1}$, one has
\[\text{either } \mu(X_i=a \lvert X^{\Lambda } = x) = 0 \text{ or }\mu(X_i = a  \lvert X^{\Lambda } = x) \geq b \]
    with $b := b(n)$ satisfying $\log\log (b) = O(\log n).$
    
    Suppose that there is FPRAS to estimate the marginals of $\mu$ i.e. estimating $\mu_i(\cdot \vert x_{-i}) $ for any $ x_{-i}\in \set{\pm 1}^{[n]\setminus \set{i}}.$
    Given query access to the Subcube oracle, there exist an identity testing algorithm for KL divergence with distance parameter $\epsilon$
    with query complexity 
    \[ O\parens*{\log\parens*{\frac{1}{b}} n\log \parens*{\frac{n}{\epsilon}}}\]
\end{theorem}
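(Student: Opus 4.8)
The plan is to reduce KL identity testing to a family of one-dimensional tests on the conditional marginals of $\pi$, which the subcube oracle makes directly accessible, and then combine them with a union bound over coordinates. For a coordinate $i$, set
\[ \phi_i := \E_{x_{\sim i} \sim \pi}{\DKL{\pi(x_i \mid x_{\sim i}) \river \mu(x_i \mid x_{\sim i})}}. \]
First I would record the two facts driving the reduction. In the YES case $\pi = \mu$, trivially $\phi_i = 0$ for every $i$. In the NO case $\DKL{\pi \river \mu} > \epsilon$, I apply the hypothesis that $\mu$ satisfies approximate tensorization of entropy with constant $C$: plugging $f = d\pi/d\mu$ into the functional form $\Ent_{\mu}{f} \le C \sum_{i=1}^n \E_{\mu}{\Ent_{i}{f}}$ and using that $\Ent_{\mu}{d\pi/d\mu} = \DKL{\pi \river \mu}$ while $\E_{\mu}{\Ent_{i}{d\pi/d\mu}} = \phi_i$ (an identity obtained by unfolding definitions together with the chain rule for KL divergence), one gets $\epsilon < \DKL{\pi \river \mu} \le C \sum_{i} \phi_i$, so some coordinate $i^\star$ has $\phi_{i^\star} > \epsilon/(Cn)$. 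Hence it suffices to distinguish ``$\phi_i = 0$ for all $i$'' from ``$\phi_{i^\star} > \epsilon/(Cn)$ for some $i^\star$''.

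For a fixed coordinate $i$, the basic per-coordinate test proceeds as follows. Using the subcube oracle on the full cube we sample a configuration from $\pi$ and keep its restriction $x_{\sim i}$; then, conditioning the subcube oracle on the subcube $\{y : y_{\sim i} = x_{\sim i}\}$, we draw a few independent copies of the single binary coordinate $x_i \sim \pi(\cdot \mid x_{\sim i})$. The FPRAS for the marginals of $\mu$ computes $\mu(\cdot \mid x_{\sim i})$, a ratio of two marginal probabilities, to any prescribed accuracy, and the hypothesis $\log\log(1/b) = O(\log n)$ ensures the multiplicative accuracy we need is attainable in $\operatorname{poly}(n)$ time. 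Since $\mu$ is $b$-marginally bounded, $\log(1/\mu(a \mid x_{\sim i})) \le \log(1/b)$ for every relevant $a$, so the log-likelihood-ratio increment entering the statistic is bounded in absolute value by $\log(1/b)$; this boundedness is what lets a number of oracle calls scaling like $\log(1/b)$ --- rather than a power of $1/b$ --- suffice for the basic test. Repeating the basic test $O(\log(n/\epsilon))$ times, thresholding, and taking a union bound over the $n$ coordinates then gives a globally correct decision, and multiplying the $n$ coordinates by the $O(\log(n/\epsilon))$ repetitions by the $O(\log(1/b))$ cost per basic test yields the claimed $O(\log(1/b)\, n \log(n/\epsilon))$ query complexity.

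The step I expect to be the main obstacle is the concentration analysis behind the basic per-coordinate test. A crude Hoeffding bound on a statistic valued in $[0,\log(1/b)]$ would need $\operatorname{poly}(n/\epsilon)$ samples to separate $\phi_i = 0$ from $\phi_i = \epsilon/(Cn)$, which is far too many; getting down to $O(\log(1/b)\log(n/\epsilon))$ requires exploiting the ``relative'' nature of KL divergence. The right object is an accumulated signed log-likelihood-ratio (equivalently, pseudolikelihood-difference) increment, designed so that the YES case produces a centered, bounded increment while any offending coordinate produces a strictly positive drift of order $\phi_i$; one then argues concentration of the aggregated statistic rather than of its individual coordinate contributions, with fluctuations controlled by the $\log(1/b)$ range and by the $b$-marginal-bounded hypothesis (which simultaneously guarantees that every conditional encountered by the sampler and by the FPRAS is well-defined). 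Controlling the variance of this aggregate, given that the different coordinates reuse the same sampled configurations, is the delicate part of the argument.
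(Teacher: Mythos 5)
This theorem is not proved in the paper you are reading: it is quoted verbatim from \cite{BCSV21} (their Theorem~7.5) and used as a black box, so there is no in-paper proof to compare your proposal against.

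On the merits, your reduction is sound as far as it goes: applying approximate tensorization to $f = d\pi/d\mu$ does give $\DKL{\pi\river\mu}\le C\sum_i\phi_i$ with $\phi_i = \E_{x_{\sim i}\sim\pi}{\DKL{\pi(\cdot\mid x_{\sim i})\river\mu(\cdot\mid x_{\sim i})}}$, and the dichotomy ``all $\phi_i=0$'' versus ``some $\phi_{i^\star}>\epsilon/(Cn)$'' is the correct target for the tester. However, there is a genuine gap: you acknowledge that the per-coordinate concentration analysis is ``the delicate part'' and then leave it unproved. That step is exactly where the claimed $O(\log(1/b)\, n\log(n/\epsilon))$ query complexity is earned --- the naive Hoeffding bound you rule out is, as you say, off by polynomial factors, and the alternative you sketch (an accumulated signed log-likelihood-ratio statistic with drift of order $\phi_i$) is only described, not analyzed. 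A complete argument would also need to handle the fact that $\pi$ need not be marginally bounded, so $\log\bigl(\pi(a\mid x_{\sim i})/\mu(a\mid x_{\sim i})\bigr)$ is not a priori bounded by $\log(1/b)$ (only the $\mu$-denominator is controlled); it would need to control the dependency created by reusing sampled configurations across coordinates; and it would need to absorb the multiplicative error in the FPRAS estimates of $\mu$'s conditionals. As written, the proposal is a correct high-level plan but does not contain the concentration lemma that makes the stated query bound hold.
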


This immediately implies efficient identity testing algorithm for $\mu$ satisfying the preconditions \cref{thm:learning ising}, since $\mu$ is $b(n)=\exp(-\theta(\alpha\sqrt{n} +R))$-marginally bounded by the same argument as in proof of \cref{thm:learning ising}.
\begin{corollary}\label{cor:SK model identity testing}
    Let $p$ be the distribution satisfying the assumptions in \cref{thm:learning ising}. There exists identity testing algorithm for KL divergence with distance parameter $\epsilon$ with query complexity $O( n^{3/2+B\beta + R} \log(n/\epsilon))$ with $B$ being a constant, the same as in \cref{thm:at-general}.
\end{corollary}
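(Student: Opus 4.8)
The plan is to apply the identity-testing theorem of \cite{BCSV21} quoted above (Theorem 7.5) to the explicitly given distribution $\mu = p = p_{J,h}$, so that the entire proof reduces to verifying its three hypotheses for $p$ and then substituting the resulting parameters into its query-complexity bound. Concretely, the three things to check are: (i) $p$ satisfies approximate tensorization of entropy with some constant $C$; (ii) $p$ is $b$-marginally bounded with $\log\log(1/b) = O(\log n)$; and (iii) there is an FPRAS for the conditional marginals $\mu_i(\cdot \mid x_{-i})$.

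For (i), since $p_{J,h}(x) \propto \exp(\tfrac12\langle x, Jx\rangle + \langle h, x\rangle)$, the Hamiltonian $H$ is affine in each coordinate and $\nabla^2 H \equiv J$, so the hypothesis $\|J\|_{\operatorname{op}} \le \alpha < A$ is exactly the smoothness hypothesis $\beta := \max_{\sigma}\|\nabla^2 H(\sigma)\|_{\operatorname{op}} \le \alpha < A$ required to invoke \cref{thm:at-general} --- this is the same identification already used inside the proof of \cref{thm:learning ising}. Hence \cref{thm:at-general}(1) gives that $p$ (equivalently $p^{\hom}$) satisfies approximate tensorization of entropy with constant $C = O(n^{B\beta})$. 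For (ii), I would reuse verbatim the computation from the proof of \cref{thm:learning ising}: for any $i$ and any full configuration $x$ of the remaining coordinates, $p(x_i = a \mid x_{\sim i}) = (1 + e^{-2a(\langle J_i, x\rangle + h_i)})^{-1}$ with $|\langle J_i, x\rangle + h_i| \le \min(\|J_i\|_1,\ \alpha\sqrt n) + |h_i| \le \alpha\sqrt n + R$; since a conditional given only a partial configuration is an average of such full conditionals, $p$ is $b$-marginally bounded with $\log(1/b) = O(\alpha\sqrt n + R)$, which in particular gives $\log\log(1/b) = O(\log n)$ whenever $R = n^{O(1)}$ (e.g.\ $R = \widetilde O(\sqrt n)$ for the SK model). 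For (iii), the conditional marginal $\mu_i(\cdot \mid x_{-i})$ is precisely the explicit Bernoulli law above, computable exactly in $O(n)$ arithmetic operations, so no randomized approximation is even needed.

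With (i)--(iii) in hand, Theorem 7.5 of \cite{BCSV21} produces an identity tester for KL divergence with distance parameter $\epsilon$, and substituting $\log(1/b) = O(\alpha\sqrt n + R)$ together with the approximate tensorization constant $C = O(n^{B\beta})$ into its query-complexity bound yields the claimed $O(n^{3/2 + B\beta + R}\log(n/\epsilon))$. I do not expect any genuine obstacle here: all the real content lives in \cref{thm:at-general} (which supplies the approximate tensorization constant) and in the elementary logistic-form estimates already present in the proof of \cref{thm:learning ising} (which supply the marginal-boundedness parameter), and the only point requiring care is consistent bookkeeping of the two parameters $b$ and $C$ when they are fed into the \cite{BCSV21} bound, together with noting that the Hessian-norm parameter $\beta$ of \cref{thm:at-general} coincides with the operator-norm parameter $\alpha$ appearing in \cref{thm:learning ising}.
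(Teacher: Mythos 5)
Your proposal is correct and follows essentially the same route as the paper: the paper's own justification is the one-line observation that $p$ is $b$-marginally bounded with $\log(1/b)=O(\alpha\sqrt{n}+R)$ by the logistic-conditional computation from the proof of \cref{thm:learning ising}, combined with the approximate tensorization constant from \cref{thm:at-general} and the quoted theorem of \cite{BCSV21}. Your write-up simply makes the three hypothesis checks explicit (including the correct tower-property reduction from partial to full conditionals), so there is nothing to add.
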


 \paragraph{Acknowledgements.} F.K. was supported in part by NSF award CCF-1704417, NSF award IIS-1908774, and N. Anari’s Sloan Research Fellowship. V.J~was supported by NSF CAREER award DMS-2237646. 
	\PrintBibliography
	\appendix
    \section{Miscellaneous Facts} 

\paragraph{Equivalent expressions for the Dirichlet form.} 
Let
\[ \nu(\sigma) \propto \exp(H(\sigma)) \]
be a binary spin system on the hypercube $\{\pm 1\}^n$. Note that the conditional law at site $j$ is
\[ \nu(\sigma_j \mid \sigma_{\sim j}) = \frac{\exp(\sigma_j \partial_j H(\sigma))}{\exp(\sigma_j \partial_j H(\sigma)) + \exp(-\sigma_j \partial_j H(\sigma))} = \frac{1}{1 + \exp(-2\sigma_j \partial_j H(\sigma))}\]
Recall that the Dirichlet form for Glauber dynamics is
\[ \mathcal{E}(f,f) :=  \sum_{\sigma \sim \tau} \frac{\nu(\sigma)\nu(\tau)}{\nu(\sigma) + \nu(\tau)} (f(\sigma) - f(\tau))^2 = \frac{1}{2} \sum_{\sigma} \nu(\sigma) \sum_{j = 1}^n \frac{\nu(\hat \sigma_j)}{\nu(\sigma_j) + \nu(\hat \sigma_j)} (f(\sigma) - f(\hat \sigma_j))^2  \]
where $\hat \sigma_j$ denotes $\sigma$ with the spin at site $j$ flipped. Note that
\[ \frac{\nu(\hat \sigma_j)}{\nu(\sigma_j) + \nu(\hat \sigma_j)} = \frac{1}{\nu(\sigma_j)/\nu(\hat \sigma_j) + 1} = \frac{1}{1 + \exp(2 \sigma_j \partial_j H(\sigma))}. \]
This lets us establish the following fact which shows consistency with the notation in \cite{pspin}:
\begin{lemma}[Standard, see e.g.\ \cite{pspin}]
For all functions $f$, we have
\[ \mathcal{E}(f,f) = \frac{1}{4} \sum_{\sigma} \nu(\sigma) \sum_{j = 1}^n \cosh^{-2}(\partial_j H(\sigma)) (f(\sigma) - f(\hat \sigma_j))^2 \]
\end{lemma}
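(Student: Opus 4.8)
The plan is to start from the second-to-last display above, namely
\[ \mathcal{E}(f,f) = \frac{1}{2} \sum_{\sigma} \nu(\sigma) \sum_{j = 1}^n \frac{\nu(\hat \sigma_j)}{\nu(\sigma) + \nu(\hat \sigma_j)} (f(\sigma) - f(\hat \sigma_j))^2, \]
and to fold the leading $\nu(\sigma)$ into the fraction so that the summand becomes the symmetric ``edge weight''
\[ w(\sigma,j) := \frac{\nu(\sigma)\,\nu(\hat\sigma_j)}{\nu(\sigma)+\nu(\hat\sigma_j)}; \]
this is exactly the coefficient appearing in the original unordered-edge definition of $\mathcal{E}$, the prefactor $1/2$ accounting for the fact that each edge $\{\sigma,\hat\sigma_j\}$ contributes two ordered pairs. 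Thus the entire statement reduces to rewriting $w(\sigma,j)$ in terms of $\partial_j H$.

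For that step I would use multilinearity of $H$ twice. First, it gives $H(\sigma) - H(\hat\sigma_j) = 2\sigma_j\,\partial_j H(\sigma)$ (as already used in the text to compute the conditional law), hence $\nu(\sigma)/\nu(\hat\sigma_j) = e^{2\sigma_j \partial_j H(\sigma)}$. Combined with the elementary identity $\frac{ab}{a+b} = \frac{a+b}{(\sqrt{a/b}+\sqrt{b/a})^2}$ applied with $a=\nu(\sigma)$ and $b=\nu(\hat\sigma_j)$, this yields
\[ w(\sigma,j) = \frac{\nu(\sigma)+\nu(\hat\sigma_j)}{\big(e^{\sigma_j \partial_j H(\sigma)}+e^{-\sigma_j \partial_j H(\sigma)}\big)^2} = \frac{\nu(\sigma)+\nu(\hat\sigma_j)}{4\cosh^2(\partial_j H(\sigma))}, \]
where the last equality uses that $\cosh$ is even, so the sign $\sigma_j\in\{\pm1\}$ drops out. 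Plugging this back in gives
\[ \mathcal{E}(f,f) = \frac{1}{8} \sum_{\sigma} \sum_{j=1}^n \frac{\nu(\sigma)+\nu(\hat\sigma_j)}{\cosh^2(\partial_j H(\sigma))}\,(f(\sigma)-f(\hat\sigma_j))^2. \]

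The final step is to split the numerator $\nu(\sigma)+\nu(\hat\sigma_j)$ and, in the piece carrying $\nu(\hat\sigma_j)$, relabel $\sigma\leftrightarrow\hat\sigma_j$ for each fixed $j$. This relabeling is a bijection of $\{\pm1\}^n$ that leaves $(f(\sigma)-f(\hat\sigma_j))^2$ unchanged and also leaves $\partial_j H$ unchanged — the second appeal to multilinearity, namely that $\partial_j H$ does not depend on the $j$-th coordinate — so the two pieces coincide, their sum is $2\sum_\sigma \nu(\sigma)\sum_j \cosh^{-2}(\partial_j H(\sigma))(f(\sigma)-f(\hat\sigma_j))^2$, and dividing by $8$ produces exactly the claimed formula.

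I do not expect any genuine obstacle: the statement is an identity and the argument is a short computation. The only points that need a little care are the factor-of-two bookkeeping between the edge-sum and ordered-pair-sum forms of the Dirichlet form, and the two uses of multilinearity — that $H(\sigma)-H(\hat\sigma_j)=2\sigma_j\partial_j H(\sigma)$ and that $\partial_j H$ is independent of $\sigma_j$ — which together with $\cosh$ being even are precisely what make the symmetrization step go through.
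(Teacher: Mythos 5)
Your proposal is correct and amounts to the same computation as the paper's proof: both reduce to the facts that $\nu(\sigma)/\nu(\hat\sigma_j)=e^{2\sigma_j\partial_j H(\sigma)}$, that $\partial_j H$ is independent of $\sigma_j$ (so one may symmetrize over the pair $\{\sigma,\hat\sigma_j\}$), and that $\cosh$ is even. The paper organizes this by rewriting both sides into a common form $\frac{1}{2}\sum_\sigma\sum_j \nu(\sigma_{\sim j})\,\bigl(2+e^{2\sigma_j\partial_j H(\sigma)}+e^{-2\sigma_j\partial_j H(\sigma)}\bigr)^{-1}(f(\sigma)-f(\hat\sigma_j))^2$, whereas you rewrite the edge weight directly via $\frac{ab}{a+b}=\frac{a+b}{4\cosh^2}$ and then relabel; the bookkeeping in your version checks out.
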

\begin{proof}
We have (using that $\cosh(\partial_j H) = \cosh(\sigma_j \partial_j H)$ by evenness)
\begin{align*} 
&\frac{1}{4} \sum_{\sigma} \nu(\sigma) \sum_{j = 1}^n \cosh^{-2}( \partial_j H(\sigma)) (f(\sigma) - f(\hat \sigma_j))^2 \\
&=  \sum_{\sigma} \nu(\sigma) \sum_{j = 1}^n \frac{1}{(\exp(\sigma_j \partial_j H(\sigma)) + \exp(-\sigma_j \partial_j H(\sigma)))^2} (f(\sigma) - f(\hat \sigma_j))^2 \\
&= \frac{1}{2} \sum_{\sigma} \nu(\sigma) \sum_{j = 1}^n \frac{1}{1 + (\exp(2\sigma_j \partial_j H(\sigma)) + \exp(-2\sigma_j \partial_j H(\sigma)))/2} (f(\sigma) - f(\hat \sigma_j))^2 \\
&= \frac{1}{2} \sum_{\sigma} \sum_{j = 1}^n \nu(\sigma_{\sim j})  \frac{\nu(\sigma_j \mid \sigma_{\sim j})}{1 + (\exp(2\sigma_j \partial_j H(\sigma)) + \exp(-2\sigma_j \partial_j H(\sigma)))/2} (f(\sigma) - f(\hat \sigma_j))^2 \\
&= \frac{1}{2} \sum_{\sigma} \sum_{j = 1}^n \nu(\sigma_{\sim j}) \frac{1}{2 + \exp(2\sigma_j \partial_j H(\sigma)) + \exp(-2\sigma_j \partial_j H(\sigma))} (f(\sigma) - f(\hat \sigma_j))^2
\end{align*}
where in the last step we averaged over pairs of $\sigma$  agreeing on $\sigma_{\sim j}$ and used that 
\[ \frac{\nu(\sigma_j = +1 \mid \sigma_{\sim j}) + \nu(\sigma_j = -1 \mid \sigma_{\sim j})}{2} = \frac{1}{2}. \] 
On the other hand, the Dirichlet form is
\begin{align*}
\mathcal{E}(f,f) 
&= \frac{1}{2} \sum_{\sigma} \sum_{j = 1}^n \nu(\sigma_{\sim j})  \frac{\nu(\sigma_j \mid \sigma_{\sim j})}{1 + \exp(2 \sigma_j \partial_j H(\sigma))} (f(\sigma) - f(\hat \sigma_j))^2 \\
&=  \frac{1}{2} \sum_{\sigma} \sum_{j = 1}^n \nu(\sigma_{\sim j})  \frac{1}{(1 + \exp(-2\sigma_j \partial_j H(\sigma)))(1 + \exp(2 \sigma_j \partial_j H(\sigma)))} (f(\sigma) - f(\hat \sigma_j))^2 \\
&= \frac{1}{2} \sum_{\sigma} \sum_{j = 1}^n  \nu(\sigma_{\sim j}) \frac{1}{2 + \exp(-2\sigma_j \partial_j H(\sigma)) + \exp(2 \sigma_j \partial_j H(\sigma))} (f(\sigma) - f(\hat \sigma_j))^2 
\end{align*}
so we established the desired equality. 
\end{proof}

\section{Deferred proofs \texorpdfstring{from \cref{sec:gibbs}}{}}
\label{app:deferred-proofs}

\begin{proof}[Proof of \cref{cor:at for constantly many free spins}]
Consider $A \subseteq [n]$ and a pinning $\sigma_A,$ then the pinned subsystem on $[n]\setminus A$ is of the form
\[\mu^{[A,\emptyset]}_{\sigma_A}(\sigma) \propto \exp(H^{[A,\emptyset]}_{\sigma_A}(\sigma))\]
Let $H_A := H^{[A,\emptyset]}_{\sigma_A}$ and consider the multilinear extension of $H_A$: \[ H_A(x) = \sum_{S \subset [n]\setminus A} \hat H(S) \prod_{i \in S} \sigma_i \] 
Let $\mu_0 (\sigma)\propto \exp(\sum_{i\in A} \hat H_A(\set{i}) \sigma_i )$, so that $\mu_0$ is a product distribution and hence satisfies approximate entropy tensorization with $C=1.$ Let
\[H_{A,\geq 2} (\sigma) =\sum_{S \subset [n]\setminus A ,\abs{S} \geq 2} \hat H(S) \prod_{i \in S} \sigma_i,\]
so that $\mu(\sigma)\propto \exp(H_{A,\geq 2}(\sigma))\mu_0(x). $
Then,
\[\abs{H_{A,\geq 2} (\sigma)} = \abs{\sum_{S\subseteq A}  \hat H(S) \prod_{i \in S} \sigma_i}  = \abs{\sigma^\intercal \nabla^2 H_{A,\geq 2} \sigma} \leq \norm{\nabla^2 H_{A,\geq 2}}_{\operatorname{OP}} \norm{\sigma}_2^2 = (N-k) \norm{\nabla^2 H_{A,\geq 2}}_{\operatorname{OP}} \leq O(\beta)  \]
by \cref{lem:submatrix norm trivial bound}. The required assertion then follows from \cref{lem:comparision} below. 
\end{proof}

\begin{lemma}[Comparison theorem for approximate entropy tensorization] \label{lem:comparision}
Consider distributions $ \mu$ and $\mu'$ over $\Omega$ satisfying $\mu(x) \propto \mu'(x) \exp(W(x)).$  
Let $\norm{W}_{\infty} = \sup_x \abs{W(x)}.$ Then, for any function $f : \Omega \to (0,\infty),$
\[\Ent_{\mu}{f} \leq \exp(2\|W\|_{\infty})\Ent_{\mu'}{f}.\]
Consequently,
if $\mu'$ satisfies approximate entropy tensorization with constant $C,$ then $ \mu$ satisfies approximate entropy tensorization with constant $\exp(6\norm{W}_{\infty}) C .$
\end{lemma}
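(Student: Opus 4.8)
The plan is to reduce both assertions to the elementary pointwise comparison of densities. Writing $\mu(x) = \mu'(x)e^{W(x)}/Z$ with $Z = \sum_{y}\mu'(y)e^{W(y)}$ and using that $\mu'$ is a probability measure, one gets $Z \in [e^{-\norm{W}_\infty}, e^{\norm{W}_\infty}]$, hence $e^{-2\norm{W}_\infty} \le \mu(x)/\mu'(x) \le e^{2\norm{W}_\infty}$ for every $x$. For the first displayed inequality I would invoke the variational description of entropy: for any probability measure $\nu$ on $\Omega$ and any $f : \Omega \to (0,\infty)$,
\[ \Ent_{\nu}{f} = \min_{t > 0} \sum_x \nu(x)\,\psi(f(x),t), \qquad \psi(s,t) := s\log(s/t) - s + t \ge 0, \]
with the minimum attained at $t = \E_{\nu}{f}$ (both the identity and the nonnegativity of $\psi$ on $(0,\infty)^2$ are standard). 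Plugging the generally suboptimal choice $t = \E_{\mu'}{f}$ into the formula for $\Ent_{\mu}{f}$ and using $\psi \ge 0$ together with the density bound,
\[ \Ent_{\mu}{f} \le \sum_x \mu(x)\,\psi\bigl(f(x),\E_{\mu'}{f}\bigr) \le e^{2\norm{W}_\infty}\sum_x \mu'(x)\,\psi\bigl(f(x),\E_{\mu'}{f}\bigr) = e^{2\norm{W}_\infty}\Ent_{\mu'}{f}. \]

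For the tensorization claim I would use the product structure of $\Omega$ and the local form of approximate tensorization from the remark following \cref{def:approx-tensorization}: $\mu'$ satisfies it with constant $C$ iff $\Ent_{\mu'}{f} \le C\sum_{v}\E_{\mu'}{\Ent_{\mu'(\sigma_v=\cdot\mid\sigma_{\sim v})}{f}}$, and likewise for $\mu$. Then I would chain three factors of $e^{2\norm{W}_\infty}$. First, $\Ent_{\mu}{f} \le e^{2\norm{W}_\infty}\Ent_{\mu'}{f}$ by the inequality just proved, and then the hypothesis bounds $\Ent_{\mu'}{f}$. Second, the density bound gives $\E_{\mu'}{g} \le e^{2\norm{W}_\infty}\E_{\mu}{g}$ for any nonnegative $g$. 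Third, for each coordinate $v$ and each fixed value $\tau$ of $\sigma_{\sim v}$, the conditional law $\mu(\sigma_v=\cdot\mid\sigma_{\sim v}=\tau)$ is $\mu'(\sigma_v=\cdot\mid\sigma_{\sim v}=\tau)$ reweighted by $\sigma_v \mapsto e^{W(\sigma_v,\tau)}$, whose sup-norm is at most $\norm{W}_\infty$, so applying the first part of the lemma to these conditional measures (with $\mu$ and $\mu'$ interchanged) yields $\Ent_{\mu'(\sigma_v=\cdot\mid\sigma_{\sim v}=\tau)}{f} \le e^{2\norm{W}_\infty}\Ent_{\mu(\sigma_v=\cdot\mid\sigma_{\sim v}=\tau)}{f}$ for every $\tau$. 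Combining these estimates gives $\Ent_{\mu}{f} \le e^{6\norm{W}_\infty}C\sum_v \E_{\mu}{\Ent_{\mu(\sigma_v=\cdot\mid\sigma_{\sim v})}{f}}$, which is exactly approximate entropy tensorization for $\mu$ with constant $e^{6\norm{W}_\infty}C$.

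Every step here is a one-line estimate, so there is no substantial obstacle; the only points needing care are (i) verifying that restricting $W$ to a coordinate slice cannot increase its sup-norm (being a restriction, it cannot), and (ii) bookkeeping the three independent factors of $e^{2\norm{W}_\infty}$ so that the final constant is exactly $e^{6\norm{W}_\infty}C$. It is also worth remarking that ``approximate entropy tensorization'' is used here in its local form, which is legitimate because in the intended application $\Omega$ is a product space (namely $\{\pm1\}^{[n]\setminus A}$), so the equivalence recorded in that remark applies.
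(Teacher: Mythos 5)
Your proposal is correct and mirrors the paper's own argument: both establish the pointwise bound $e^{-2\|W\|_\infty} \le \mu(x)/\mu'(x) \le e^{2\|W\|_\infty}$, both prove the entropy comparison from the Donsker--Varadhan variational formula together with the nonnegativity of the integrand $f\log f - f\log t - f + t$, and both chain three factors of $e^{2\|W\|_\infty}$ (one for the full entropy, one for the conditional entropies, one for the outer expectation) to obtain the tensorization constant $e^{6\|W\|_\infty}C$. The only cosmetic difference is that your handling of the conditional comparison --- observing directly that $\mu(\cdot\mid\sigma_{\sim v})$ is a tilt of $\mu'(\cdot\mid\sigma_{\sim v})$ by a function with sup-norm at most $\|W\|_\infty$, then invoking the first part of the lemma --- is slightly more transparent than the paper's direct manipulation of the ratio of conditionals, but the substance is identical.
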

\begin{proof}
Let $Z_{\mu} = \int d \mu$ and $Z_{\mu'}=\int d\mu'$ be normalization constants of $\mu$ and $\mu'$ respectively. It is easy to see that \[Z_{\mu'}=\int \exp(-W(x))d\mu(x) \leq \exp(\norm{W}_{\infty}) Z_{\mu}.\]
Thus $\exp(-\norm{W}_{\infty}) \mu'(x) \leq \mu(x) \leq \exp(\norm{W}_{\infty}) \mu'(x).$

By the Donsker-Varadhan theorem, 
\begin{align*}
\Ent_{\mu}{f} &= \inf_{t >0}  \int (f \log f - f \log t -f + t) d\mu
\end{align*}
where$ f\log f - f\log t -f +t = f(-\log (t/f) + (t/f-1)) \geq 0$ sine $ \log x \leq x-1 $ for $ x \in (0,\infty).$
Thus
\begin{align*}
\Ent_{\mu}{f} &= \inf_{t >0} \int (f \log f - f \log t -f + t) d\mu(x)\\
&=\inf_{t >0} \int  (f(x) \log f(x) - f(x) \log t -f(x) + t) \exp(W(x)) Z_{\mu}^{-1} Z_{\mu'} d \mu'(x)\\
&\leq \inf_{t >0} \int  (f(x) \log f(x) - f(x) \log t -f(x) + t) \exp(2\norm{W}_{\infty}) d\mu'(x)\\
&= \exp(2\norm{W}_{\infty}) \inf_{t >0} \int  (f(x) \log f(x) - f(x) \log t -f(x) + t)  d\mu'(x)\\
&= \exp(2\norm{W}_{\infty}) \Ent_{\mu'}{f}
\end{align*}
Next, since $\mu'$ satisfies approximate entropy tensorization with constant $C$
\begin{align*}
  \Ent_{\mu}{f}&\leq   \exp(2\norm{W}_{\infty}) \Ent_{\mu'}{f}\leq \exp(2\norm{W}_{\infty}) C\sum_i \E_{x_{-i} \sim \mu'}{ \Ent_{\mu'_{\vert x_{-i}}}{f_{\vert x_{-i}}} }\\
  &\leq \exp(4\norm{W}_{\infty})C\sum_i \E_{x_{-i} \sim \mu'}{ \Ent_{\mu_{\vert x_{-i}}}{f_{\vert x_{-i}}}
  }\\
  &\leq \exp(6\norm{W}_{\infty})C\sum_i \E_{x_{-i} \sim \mu}{ \Ent_{\mu_{\vert x_{-i}}}{f_{\vert x_{-i}}}
  }
\end{align*}
where in the penultimate inequality we use the first statement and the fact that \[\exp(-2\norm{W}_{\infty})\leq \mu'_{\vert x_{-i}}(x_i)/\mu_{\vert x_{-i}}(x_i) =  \frac{\mu'(x)}{\mu(x)}\cdot \frac{\mu(x_{-i})}{\mu'(x_{-i})} = \frac{\mu'(x)}{\mu(x)}  \cdot\frac{\int_{y: y_{-i} = x_{-i}}\mu(y) }{\int_{y: y_{-i} = x_{-i}}\mu'(y)} \leq \exp(2\norm{W}_{\infty}),\]
and in the last inequality, we just use $\mu'(x_{-i}) \leq \exp(2\norm{W}_{\infty}) \mu(x).$
\end{proof}

\end{document}